\newtheorem{theorem}{Theorem}[section]
\newtheorem{corollary}[theorem]{Corollary}
\newtheorem{definition}[theorem]{Definition}
\newtheorem{lemma}[theorem]{Lemma}
\newtheorem{proposition}[theorem]{Proposition}
\newtheorem{remark}[theorem]{Remark}
\theoremstyle{definition} \theoremstyle{remark}
\numberwithin{equation}{section}
\newcommand{\R}{\mathbb{R}}
\newcommand{\N}{\mathbb{N}}
\newcommand{\B}{\mathcal{B}}
\newcommand{\p}{\mathcal{P}}
\newcommand{\la}{\lambda}
\newcommand{\ep}{\epsilon}
\newcommand{\HR}{{\mathcal{H}}}
\newcommand{\PR}{{\mathcal{P}}}
\newcommand{\sbr}[1]{\left(#1\right)}
\newcommand{\mbr}[1]{\left[#1\right]}
\newcommand{\lbr}[1]{\left\{#1\right\}}
\newcommand{\abs}[1]{\left\lvert#1\right\rvert}
\newcommand{\nm}[1]{\Vert #1 \Vert}
\begin{document}

	\title{Multiple sign-changing and semi-nodal normalized solutions for a  Gross-Pitaevskii type system on bounded domains: \\ the $L^2$-supercritical case
	}

	\author{Tianhao Liu$^{\mathrm{a}}$ \footnote{liuthmath@gmail.com}}
	
	\author{Linjie Song$^{\mathrm{b,c}}$ \footnote{songlinjie18@mails.ucas.edu.cn}} 
	
	\author{Qiaoran Wu$^{\mathrm{b}}$ \footnote{wuqr24@mails.tsinghua.edu.cn}}

	\author{Wenming Zou$^{\mathrm{b}}$\footnote{zou-wm@mail.tsinghua.edu.cn}}
	
	\affil{{{\small $^{\mathrm{a}}$ School of Mathematics, Statistics and Mechanics, Beijing University of Technology, Beijing 100124, China }}}
	
	\affil{{\small $^{\mathrm{b}}$ Department of Mathematical Sciences, Tsinghua University, Beijing 100084, China}}
	
	\affil{{\small $^{\mathrm{c}}$ Universit\'e Marie et Louis Pasteur, CNRS, LmB (UMR 6623), Besan\c{c}on F-25000, France}}

	\date{}	
	\maketitle
	
	\begin{abstract}
		This paper is concerned with the existence and multiplicity of    sign-changing and semi-nodal normalized solutions for the nonlinear  coupled  elliptic system
		\begin{equation}
			\left\{
			\begin{aligned}
				&-\Delta u_j + \la_j u_j =    \mu_j u_j^3+ \sum_{k=1,k\neq j }^m\beta_{kj} u_k^2 u_j,  \quad u_j \in H_0^1(\Omega), \\
				&\int_\Omega u_j^2dx = c_j, \quad j = 1,2,\cdots,m.
			\end{aligned}	
			\right.
		\end{equation}
		This system    arises in the search for standing waves of the Gross–Pitaevskii equations. Here, $\Omega \subset \R^N$   is a bounded domain, and we focus on the dimensions $N = 3, 4$. Note that the case $N=4$ corresponds to a Sobolev critical setting. The constants    $ \mu_j \neq 0, \beta_{kj} \neq 0$ and $c_j > 0$ are prescribed,  while $\la_1, \cdots, \la_m$ are unknown and appear as Lagrange multipliers. 	We present the first result in the literature on the existence and multiplicity of such sign-changing and semi-nodal normalized solutions  across all regimes of the  parameters $\beta_{kj}$.  Our main tool is a new skill of vector linking; this article is the first attempt to apply linking techniques to a coupled system of this type. To obtain semi-nodal solutions, we introduce a  partial vector linking approach, which is new to the best of our knowledge. 
		Finally, we obtain  some bifurcation results by investigating the asymptotic behavior   as $\vec{c} = (c_1, \ldots, c_m) \to \vec{0}$.

		\vskip0.1in
		{\small \noindent \text{\bf Key Words:} Gross-Pitaevskii equations; $L^2$-supercritical; sign-changing and semi-nodal normalized solutions; vector linking method; bifurcation
			\vskip0.1in
			\noindent\text{\bf Mathematics Subject Classification:} 35J50, 35J15, 35J60}
		
	\end{abstract}

	\newpage
	\section{Introduction and main results}
	The coupled nonlinear Schr\"odinger system,   widely known as the Gross–Pitaevskii  equations, has attracted considerable interest from the mathematical community, particularly since the seminal work of Lin and Wei \cite{Lin-Wei=CMP=2005}. This system takes the form
	\begin{align} \label{GP system}
		\left\{
		\begin{aligned}
			-	i \frac{\partial}{\partial t} \Phi_j =\Delta \Phi_j  + \mu_j \abs{\Phi_j}^{2} \Phi_j +\sum\limits_{k=1,k\neq j }^{m}\beta_{kj}\abs{\Phi_k}^{2} \Phi_j   &\quad \text{ for } ~x\in \Omega, ~~t>0,  \\
			\Phi_j(x,t) = 0, \quad j= 1,\cdots,m,\quad \quad\quad\quad\quad\quad\quad\quad\quad & \quad\text{ for  }  ~ x\in \partial \Omega, ~~ t>0 ,
		\end{aligned} 
		\right.
	\end{align}
	where $\Phi_j = \Phi_j(x,t) \in \mathbb{C}$ are complex-valued wave functions.
	It has applications in many physical models,    especially in the Hartree–Fock theory for Bose–Einstein condensates with multiple states  (see, e.g., \cite{BG1,BG3,BG4,BG5,BG6,ebgcbjbj,BEC-1998}) and  nonlinear optics (see, e.g.,  \cite{anaa,BG7,nonlinearoptics}).  Physically, the wave function $\Phi_j$ represents the condensate amplitude of the $j$-th component in Bose–Einstein condensates, while in the context of nonlinear optics, it denotes the $j$-th component of an optical beam in Kerr-like photorefractive media. Furthermore, problem \eqref{GP system} also appears as a variational model in population dynamics, we refer to  \cite{CC2003,CTV2002,CTV2003} for more details.

	\medbreak
	
	System \eqref{GP system} is Hamiltonian and thus conserves the masses, defined by
	\begin{equation}
		M_j(t) :=	\int_{\Omega} |\Phi_j(x,t)|^2 dx, \quad j= 1,\cdots,m.
	\end{equation}
	In the standard quantum mechanical interpretation for a single particle, this conserved quantity represents the total probability of finding the particle of the $j$-th species. In Bose–Einstein condensates,   $M_j(t)$ represents the number of particles of the $j$-th component,  while  in nonlinear optics, it corresponds to the power supply.	
	\medbreak
	A fundamental class of solutions to \eqref{GP system} is that of standing waves, which take the form $$\Phi_j(x,t)=e^{i\la_j t}u_j(x)$$ with $\lambda_j \in \mathbb{R}$ and real-valued functions $u_j \in H_0^1(\Omega)$. In view of the mass conservation, it is physically natural to study standing waves with prescribed $L^2$-norms. This leads to the following steady-state nonlinear Schr\"odinger system with mass constraints
	\begin{align} \label{eq:mainequation}
		\left\{
		\begin{aligned}
			&-\Delta u_j + \la_j u_j =  \mu_j u_j^3+ \sum_{k=1,k\neq j }^m\beta_{kj} u_k^2 u_j , \quad  u_j \in H_0^1(\Omega),  \\
			& 	\int_\Omega u_{j}^2dx=c_{j}, \quad j = 1,2,\cdots,m,
		\end{aligned}
		\right.
	\end{align}
	which is known in the literature as a \textit{normalized problem}. Here, $\Omega\subset\R^N $ is a bounded regular domain, and
	$c_{1},\cdots,c_{m}$ are given positive constants. Throughout this paper, we assume that 
	\begin{equation} \label{assumption}
		\mu_j\neq0 \quad \text{ and }  \quad \beta_{kj}=\beta_{jk}\neq 0 , ~~ \forall ~ j, k\in \lbr{1,\cdots,m},~~ j\neq k. 
	\end{equation}
	Physically, the parameters $\mu_j$ and $\beta_{kj}$ represent the intraspecies and interspecies scattering lengths, respectively, while  $\la_j $ arise from the chemical potentials.  In particular, $\mu_j > 0$ corresponds to the focusing case, and $\mu_j < 0$ corresponds to the defocusing
	case.  The sign of   $\beta_{kj} $   describes the interaction between	particles of two different condensates:   $\beta_{kj}>0$ means attractive  (or cooperative) interaction, while  $\beta_{kj}<0$ means repulsive  (or competitive) interaction.
	
	\medbreak
	The relation $\beta_{kj} = \beta_{jk}$ reflects the symmetry of interactions between different components and provides a variational structure
	to the problem. Consequently, the constrained variational method becomes a natural and effective approach. In this framework, the parameters $\lambda_j$ in \eqref{eq:mainequation} arise as unknown Lagrange multipliers. From a variational viewpoint, normalized solutions to \eqref{eq:mainequation} can be obtained as critical points of the energy functional $E: H_{0}^1(\Omega,\mathbb{R}^m) \to \R$ defined by
	\begin{align} \label{eqofE}
		E(\vec{u}) = \frac12\sum_{i=1}^m\int_\Omega |\nabla u_i|^2dx - \frac14\sum_{i=1}^m \mu_i\int_\Omega  u_i^4dx  -   \frac14\sum_{i,j=1, j \neq i}^m\beta_{ij}\int_\Omega u_i^2u_j^2 dx
	\end{align}
	on the constraint set   
	\begin{equation}\label{sets}
		S_{\vec{c}}:=  \bigg\{ \vec{u} = (u_1,\cdots,u_m) \in H_{0}^1(\Omega,\mathbb{R}^m):~ \int_\Omega u_j^2dx = c_j, ~~ j \in \lbr{ 1,\cdots,m }\bigg\},
	\end{equation}
	where $\vec{c}=\sbr{ c_1,\ldots,c_m} \in \sbr{\R^+}^m$.
	\medbreak
	Before proceeding, we   introduce some important concepts.
	\begin{definition}
		{\rm	Let $ \vec{u} := (u_1,\cdots,u_m)$ be a solution of \eqref{eq:mainequation}.  
			\begin{itemize}
				\item[$\bullet$]  A solution $\vec{u} $ is called {\it trivial} if all its components are vanishing; it is called {\it semi-trivial} if some components are vanishing and the other ones are nontrivial;  it is called {\it nontrivial} if all of its components are nontrivial, that is, $u_i \not\equiv 0$ for every $i \in  \lbr{1, \cdots, m}$.
				\item[$\bullet$] A solution $ \vec{u}$ is called a {\it positive} solution, if $u_i > 0$   for every $i\in  \lbr{1, \cdots, m}$. A solution $\vec{u} $ is  called a  {\it sign-changing} solution, if  $u_i$ changes sign for every $i\in  \lbr{1, \cdots, m}$. A solution $\vec{u} $ is called a  {\it semi-nodal} solution, if some components change sign and the other ones are positive.	
		\end{itemize}	  }
	\end{definition}
	
	In the case where $\Omega = \R^N$, the existence of positive normalized solutions for system \eqref{eq:mainequation} in the focusing setting has been extensively studied over the past decade.  All known results concern the case of $m = 2$ equations; see \cite{btjl,btjlsn,btlhzw,btsn,btsn3,btsn2,lhzw,llz} and the references therein. The main ingredient  is the application of the Pohozaev identity combined with the scaling transformation
	$s\star u(x)= e^{sN/2}u(e^sx)$ with $s\in \R$  which was initially introduced in \cite{Jeanjean97}.     Further developments  concerning the whole space case can be found in \cite{btzxzw,Borthwick-C-J-S,Jeanjean+lu,Jeanjean-Trung,Jeanjean-Zhang-Zhong} and the references therein.
	\medbreak
	In contrast, the situation on {\bf bounded domains} is markedly different due to the absence of dilation invariance, and the literature on  normalized solutions remains relatively limited.   While there has been some recent progress, the focus has  primarily been on   positive  solutions, see  \cite{DDGS,DST,PV=CVPDE,PVY=SIAM25,NTV=APDE,BQZ=MA2024,Song,slzw1,PPV=JDE} for single equation case, and \cite{nbthvg,slzw2} for systems of two equations.  In particular, the work \cite{nbthvg} also established the existence of a positive normalized minimizer for Br\'ezis–Nirenberg-type critical equations. Moreover, the existence of a second positive normalized solution on bounded domains was independently obtained in \cite{slzw1} and \cite{PVY=SIAM25}. The article \cite{NTV=APDE} addressed the existence and orbital stability of positive ground states with prescribed mass for the $L^2$-critical and supercritical scalar equations on bounded domains; related developments for scalar equations can be found in \cite{PV=CVPDE, Song, PPV=JDE}. The work \cite{BQZ=MA2024} proved the existence of positive normalized solutions to  Schr\"odinger equations on large bounded domains, thereby resolving an open problem posed in \cite{BMRV=CPDE2021}. Recently, under an $L^2$-supercritical setting, the existence of a second positive normalized solution was established in \cite{slzw2}.
	
	\vskip0.1in 
	However, to the best of our knowledge, there are no results on sign-changing normalized solutions for multi-coupled systems   \eqref{eq:mainequation} on bounded domains. Indeed, even for a single equation, this problem remains challenging.  This is primarily because   positive solutions can often be obtained using methods such as sub-solution techniques, maximum principles, or symmetry (e.g., radial symmetry), and they are frequently associated with energy minimization. In contrast, sign-changing solutions possess a more complex structure due to the presence of nodal points. This complexity not only invalidates many standard tools but also means that such solutions typically correspond to critical points at higher energy levels.
	\vskip0.1in

	To address these challenges, we develop in this paper a novel framework that bridges this gap. In particular, under the   assumption \eqref{assumption},
	we establish the existence and multiplicity of both sign-changing and semi-nodal normalized solutions for \eqref{eq:mainequation} on a bounded regular domain $\Omega\subset \R^N$ (with $N=3,4$).

	\medbreak
	Our main existence results are stated as follows.

	\begin{theorem}\label{thm1.1}
		Assume that $N=3,4,$ and that \eqref{assumption} holds. For any given  positive integer $j$, there exists $\tilde c_j > 0$ such that for each $\vec{c} = (c_1,\cdots,c_m) \in (0,\infty)^m$ with $$\sbr{\max\limits_{1\leq i\leq m} c_i^2} \sbr{\min\limits_{1\leq i\leq m} c_i}^{-1} < \tilde c_j,$$
		then 	the problem \eqref{eq:mainequation}  admits at least $j$ sign-changing normalized solutions.
	\end{theorem}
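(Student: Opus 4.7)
The plan is to obtain the $j$ sign-changing critical points of $E|_{S_{\vec{c}}}$ via a nested family of minimax values $\sigma_1 < \sigma_2 < \cdots < \sigma_j$ produced by the vector linking method announced in the introduction, and then to force each critical point to lie in the sign-changing cone by propagating the flow inside an invariant subset. In the $L^2$-supercritical regime, the scaling $s\star u(x)=e^{sN/2}u(e^s x)$ (suitably adapted to a bounded domain through truncation and rearrangement against a fixed orthonormal basis of $H_0^1(\Omega)$) produces the mountain-pass type barrier, and allows me to inflate a high-dimensional symmetric object so that its image links a suitable reference set on $S_{\vec{c}}$.

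First I would fix an $L^2(\Omega)$-orthonormal basis $(e_n)_{n\ge 1}$ of eigenfunctions of $-\Delta$ with Dirichlet boundary condition. For each $k\in\{1,\dots,j\}$, I build a product linking structure on $S_{\vec{c}}$: on each component $i=1,\ldots,m$ take a $k$-dimensional slice of the sphere $\{u_i\in H_0^1(\Omega):\int u_i^2=c_i\}$, parametrized by $S^{k-1}$ via linear combinations from the first $O(k)$ eigenfunctions, and combine it with the scaling parameter $s$. This yields a map $\Phi_k:B^{mk}\to S_{\vec{c}}$ whose boundary is sent into a set where $E<0$. The corresponding linking set is a "product sphere" of dimension depending on $k$. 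I would then define
\begin{equation}
\sigma_k:=\inf_{\Psi\in\Gamma_k}\max_{x\in B^{mk}} E(\Psi(x)),
\end{equation}
where $\Gamma_k$ is the equivariant class of admissible deformations compatible with the sign-changing structure. Standard arguments based on vector genus show $\sigma_k\le\sigma_{k+1}$ and $\sigma_k\to+\infty$, so the $j$ values are all finite.

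The central ingredient is invariance of the sign-changing set $\mathcal{N}=\{\vec{u}\in S_{\vec{c}}: u_i^{\pm}\not\equiv 0,\ i=1,\dots,m\}$ under the descending flow, which is where the Br\'ezis-Martin-type result on the product manifold, generalized in Section \ref{Sect2}, is invoked. I would construct a pseudo-gradient vector field $\mathcal{V}$ on $S_{\vec{c}}\setminus\mathcal{K}$ (with $\mathcal{K}$ the critical set) whose components incorporate the Lagrange multipliers $\lambda_i(\vec{u})$ obtained from the tangential projection. The key nontrivial point, noted by the authors in Lemma \ref{lemG(u)beta<0}, is that in the constrained setting $\lambda_i$ depends on $\vec{u}$ and so the coupling terms $\sum_k\beta_{kj}u_k^2 u_j$ do \emph{not} split in a clean way when estimating the sign of $\langle \mathcal{V},u_j^{\pm}\rangle$; I would absorb this by choosing $\mathcal{V}$ close to $-\nabla_{S_{\vec{c}}} E$ with a careful localization near $\partial\mathcal{N}$, so that the Br\'ezis-Martin invariance criterion applies componentwise. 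Combined with the standard deformation lemma, this produces a Palais-Smale sequence $(\vec{u}_n)\subset \mathcal{N}$ at each level $\sigma_k$.

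The final step is compactness and distinctness. Here the assumption $(\max c_i^2)/(\min c_i)<\tilde c_j$ enters: by rescaling and a direct comparison with a suitable mountain pass profile one bounds $\sigma_k$ by a quantity that vanishes as $\vec{c}\to\vec{0}$, so for $\vec c$ small enough all $\sigma_1,\dots,\sigma_j$ stay strictly below the Sobolev threshold $\tfrac{1}{N}S^{N/2}/\max\mu_i$ that separates compactness from loss in the critical dimension $N=4$; for $N=3$ the embedding $H_0^1\hookrightarrow L^4$ is compact and this step is automatic. Each Palais-Smale sequence then converges (up to subsequence) to a sign-changing critical point $\vec{u}^{(k)}$ of $E|_{S_{\vec{c}}}$, and the strict ordering of the $\sigma_k$ together with the linking dimension provides the distinctness of the $j$ solutions. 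The hard part, I expect, is neither the linking construction itself nor the Sobolev-critical compactness recovery, but rather the constrained invariance of $\mathcal{N}$: verifying that the $\vec{u}$-dependent Lagrange multipliers do not destroy the componentwise sign-preservation of the flow, and this is precisely why the paper develops the product-manifold extension of Br\'ezis-Martin.
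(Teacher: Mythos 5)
Your proposal correctly identifies the skeleton of the argument (minimax via vector linking, flow invariance of the sign-changing cone via a Brézis-Martin-type result on the product manifold, compactness below a critical threshold when $N=4$), and in particular you correctly pinpoint the subtle point about $\vec u$-dependent Lagrange multipliers in the flow-invariance check. However, two of the structural choices you make would not go through on a bounded domain, and they are precisely the places where the paper does something different.

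First, you build the linking structure out of the scaling $s\star u(x)=e^{sN/2}u(e^s x)$ ``suitably adapted\ldots through truncation,'' and you use this scaling both to create the barrier and, implicitly, to control the minimax levels. On a bounded domain there is no dilation invariance, and the paper explicitly avoids this scaling altogether. The actual mechanism that replaces it is the bounded set $\B_\rho=\{\vec u\in S_{\vec c}: \sum_i\int_\Omega|\nabla u_i|^2<\rho\}$ together with the threshold $M_1=M_0\rho$: since $E\geq M_1$ on $\partial\B_\rho$ (Lemma \ref{lemma 2.1}), the descending flow cannot leave $\B_\rho^{M_1}$, so Palais--Smale sequences at the minimax level are automatically bounded. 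In the $L^2$-supercritical regime boundedness of PS sequences is the central difficulty, and your proposal does not contain a substitute for this device. Moreover the mass-smallness hypothesis $(\max c_i^2)/(\min c_i)<\tilde c_j$ is used precisely to make the linking sets $\mathbb{M}_{k+1}\subset\mathbb{S}_{k+1}$ fit inside $\B_\rho$ and below $M_1$ (Lemma \ref{lemma 2.2} and Remark \ref{rmktildeck}); it does more than keep the levels under the Sobolev barrier. Your threshold expression $\frac{1}{N}S^{N/2}/\max\mu_i$ is also not the one that appears here; the paper's Lemma \ref{lempsconbeta>0} works with $\tfrac14(\mu_{\max}^++\beta_{\max}^+)^{-1}\mathcal{C}_N^4$ and ensures $M_1<\tfrac12\rho$ stays below it.

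Second, you invoke ``equivariant\ldots deformations'' and ``standard arguments based on vector genus'' to derive monotonicity and divergence of the levels $\sigma_k$. The paper is explicitly \emph{not} symmetric: Remark \ref{remark 1.10} stresses that the method applies to non-even functionals, and the authors defer vector genus to a forthcoming paper for the $L^2$-subcritical/critical case. Here the linking object $\mathbb{M}_{k+1}$ is a half-sphere (the constraint $t_{i,k+1}\geq0$ breaks symmetry), so no $\mathbb{Z}_2$-index is available. Distinctness of the $j$ levels is instead obtained by the direct sandwich $\sup_{\mathbb{S}_{k_l+1}}E<m^\delta_{\vec c,\rho,k_{l+1}}\leq\sup_{\mathbb{S}_{k_{l+1}+1}}E$ coming from $\mathbb{S}_{k}^\bot$ lying strictly above $\mathbb{S}_{k+1}$ in energy (Lemmas \ref{lemma 2.2} and \ref{lemnemp}), choosing indices $k_1<\cdots<k_j$ with $\Lambda_{k_l}<\Lambda_{k_l+1}$. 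The levels do not tend to $+\infty$: all of them are required to stay below $M_1$, which is a fixed finite bound once $\rho$ is chosen. As written, your divergence claim $\sigma_k\to+\infty$ would in fact contradict the confinement to $\B_\rho^{M_1}$ that underlies the whole compactness argument.
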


	\begin{theorem}\label{thm1.2}
		Assume that $N=3,4,$ and that \eqref{assumption} holds.  For an integer $d$ with $1 \le d \le m-1$ and any positive integer $j$, there exists $\tilde c_{j,d} > 0$ such that for each $\vec{c} = (c_1,\cdots,c_m) \in (0,\infty)^m$ with $$\sbr{\max\limits_{1\leq i\leq m} c_i^2} \sbr{\min\limits_{1\leq i\leq m} c_i}^{-1} < \tilde c_{j,d},$$
		then	the problem \eqref{eq:mainequation} admits at least $j$  semi-nodal  normalized solutions  with sign-changing components $u_1, \cdots, u_d$ and positive components $u_{d+1}, \cdots, u_m$.		
	\end{theorem}
	
	\begin{remark}
		{\rm 	We emphasize that  the values $\tilde c_j$ and $\tilde c_{j,d}$ can be explicitly estimated. Moreover, we prove the existence of an arbitrarily large number of sign-changing and semi-nodal normalized solutions,  thereby directly filling  the gap in the study of multi-coupled systems on bounded domains highlighted in the introduction. }
	\end{remark}
	\begin{remark}
		{\rm  Assumption \eqref{assumption} covers all regimes for the coefficients \(\mu_j\) and \(\beta_{kj}\). In particular, this assumption encompasses not only the purely focusing case (\(\mu_j>0\) for all \(j\)), the purely defocusing case (\(\mu_j<0\) for all \(j\)), and mixed cases, but also the purely attractive case (\(\beta_{kj}>0\) for all \(k\neq j\)), the purely repulsive case (\(\beta_{kj}<0\) for all \(k\neq j\)), and mixed attractive-repulsive cases. Although we do not explicitly consider the case where some coefficients \(\mu_j\) or \(\beta_{kj}\) vanish, our results still hold, provided that we interpret expressions containing these coefficients in the denominator as equal to \(+\infty\) when the denominator is zero; see, for instance, \eqref{defi of rho}.}
	\end{remark}

	\begin{remark}
		{\rm  
			Theorems \ref{thm1.1} and \ref{thm1.2} deal with  the dimensions $N=3$ and $4$, both of which are $L^2$-supercritical since $4 > p_c := 2 + 4/N$. In this regime, the energy functional  $E$ is  unbounded from below on $H_0^1(\Omega,\R^m)$. Consequently, additional ideas are required to obtain the bounded   Palais-Smale sequences.  Furthermore,  the case $N=4$ is Sobolev-critical and presents additional challenges compared to $N=3$, due to the non-compactness of the embedding $H_0^1(\Omega) \hookrightarrow L^4(\Omega)$. As a result, further technical estimates are required.   
		}
	\end{remark}

	\medbreak

	The main tool of this paper is a new skill of \emph{vector linking} introduced in Section \ref{seclink}. In particular, to obtain semi-nodal  normalized solutions, we introduce \emph{partial vector linking} which is innovative up to our knowledge. It seems that the vector linking method has never been used to search for solutions of a coupled system, dealing with both the fixed mass problems and  the fixed frequency problems. This paper is the first attempt in this direction in the literature.
	
	\medbreak
	In our proof, we also generalize the well-known Br\'ezis-Martin result, which is stated for a Banach space, to a \emph{product manifold} setting where the mass constrained problem for Schr\"odinger systems is included. See Section \ref{Sect2}. We believe that this generalization is of independent interest. It constitutes a general framework to prove that some subset is invariant with respect to a flow on a product manifold, which extends the arguments in \cite[Section 4]{JS}. Then in Section \ref{secdescendingflow} we construct a descending flow with the gradient field or with a just right pseudo gradient vector field. After checking the condition used in the general framework, we successfully discover  some sets which are invariant under the descending flow. Here we point out that there is a main difference in checking this condition with the unconstrained case where $\la_1,\cdots,\la_m$ are independent of $\vec{u}$; see details in Lemma \ref{lemG(u)beta<0}.
	


	\begin{remark} \label{remark 1.10}
		{\rm It is worth noticing that our main results can be easily generalized. In fact, let  $f, g : \R \to \R$  be two continuous functions such that $f(t), g(t) \ge 0$ for   $t > 0$, $f(t), g(t) \le 0$ for   $t < 0$, and
			\begin{align} \label{eqassumefg}
				|f(t)| \le C(|t|^{p-1} + |t|^{\frac{2^*}{2}-1}), \quad |g(t)| \le C(|t|^{q-1} + |t|^{2^*-1}), \quad \forall t \in \R,
			\end{align}
			for some $C > 0$, $1 < p \le 2^*/2$, $2 < q \le 2^*$, where $2^* = 2N/(N-2)$ is the Sobolev critical exponent.  Our method can be applied to the following system
			\begin{align*}
				\left\{
				\begin{aligned}
					& -\Delta u_i + \la_i u_i = \mu_i g(u_i) + \sum_{j\neq i}\beta_{ij} f(u_i) F(u_j) \quad \text{ in } \Omega , \\
					& u_1=\cdots=u_m=0   \quad \text{ on } \partial \Omega,
				\end{aligned}
				\right.
			\end{align*}
			where $\Omega \subset \R^N$ ($N \ge 3$) and $\mu_i \neq 0$, $\beta_{ij} = \beta_{ji}  \neq 0$, $\forall i \neq j$, and $F(t) = \int_0^t f(s)ds$.  We emphasize that our method is applicable to \emph{non-even functionals} and so we assume nothing on the symmetry of $f$ and $g$.}
	\end{remark}

	

	\medbreak

	Finally we are concerned with the bifurcation phenomenon of \eqref{eq:mainequation} by investigating limit properties of the normalized solutions and Lagrange multipliers obtained in Theorems \ref{thm1.1} and \ref{thm1.2} as $\vec{c}\to\vec{0}^+$. We now introduce the definition of bifurcation points for system \eqref{eq:mainequation}. Owing to the presence of semi-trivial solutions, bifurcation points will be  categorized as either nontrivial or semi-trivial.

	\begin{definition}[Bifurcation points] \label{bifurcation}
		{\rm  We say $\vec{\la}^* = (\la_1^*,\cdots,\la_m^*) \in \R^m$ is a bifurcation point of system \eqref{eq:mainequation} (without prescribed $L^2$-norms) if and only if there exists $\{\la_{n,1} ,\cdots, \la_{n,m}; \vec{u}_n\} \subset \R^m \times H_0^1(\Omega,\R^m)$ such that $\vec{u}_n \not\equiv 0$ solves \eqref{eq:mainequation} with $\la_j = \la_{n,j}$, $-\la_{n,j} \to \la_j^*$, $j = 1, \cdots, m$, but $\vec{u}_n \to 0$ strongly in $H_0^1(\Omega,\R^m)$ as $n \to \infty$. Moreover, $\vec{\la}^*$ is called a nontrivial bifurcation point if $\vec{u}_n$ is nontrivial for each $n$ while is called a semi-trivial bifurcation point if $\vec{u}_n$ is semi-trivial for every $n$. Given a positive integer $1 \le d \le m-1$, we call $\vec{\la}^*$ a $d$-semi-trivial bifurcation point if $\vec{u}_n$ has exactly $d$ nontrivial components.}
	\end{definition}

	\begin{theorem}\label{limitbehaviorhsignchanging}
		Assume that $N=3,4,$ and that \eqref{assumption} holds. Then the set of nontrivial bifurcation points of system \eqref{eq:mainequation} is exactly
		\[
		\bigg\{ (\lambda_{1},\cdots,\lambda_{m}) \in \R^m: \la_i \text{ is an eigenvalue of $-\Delta$ on $H_0^1(\Omega)$ for each } i \in \big\{1,\cdots,m\big\} \bigg\}.
		\]
	\end{theorem}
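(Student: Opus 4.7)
Plan: The theorem claims an equality of two sets, so I prove both inclusions. The forward direction (every nontrivial bifurcation point consists of eigenvalues in each slot) is a rescaling/linearization argument; the reverse direction (every eigenvalue tuple is realized) I would extract from the small-mass asymptotics of the solutions produced in Theorems~\ref{thm1.1}--\ref{thm1.2}.

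\textbf{Forward direction.} Fix a nontrivial bifurcation point $\vec{\la}^*$ witnessed by $(\vec{\la}_n, \vec{u}_n) \to (\vec{\la}^*, \vec{0})$ in $\R^m \times H_0^1(\Omega,\R^m)$ with each $\vec{u}_n$ nontrivial. For each $j$ introduce the rescaling $w_{n,j} := u_{n,j}/\|u_{n,j}\|_{H_0^1}$, so $\|w_{n,j}\|_{H_0^1} = 1$; dividing the $j$-th equation of \eqref{eq:mainequation} by $\|u_{n,j}\|_{H_0^1}$ yields
\begin{equation*}
-\Delta w_{n,j} + \la_{n,j} w_{n,j} = \Bigl( \mu_j u_{n,j}^2 + \sum_{k \neq j} \beta_{kj} u_{n,k}^2 \Bigr) w_{n,j}.
\end{equation*}
Because $N \in \{3,4\}$, the continuous embedding $H_0^1(\Omega) \hookrightarrow L^4(\Omega)$ and $\vec{u}_n \to 0$ in $H_0^1$ give $\|u_{n,k}\|_{L^4} \to 0$. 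The bracket therefore converges to $0$ in $L^2$ while $w_{n,j}$ stays bounded in $L^4$, and H\"older's inequality shows the right-hand side tends to $0$ in $L^{4/3}(\Omega) \hookrightarrow H^{-1}(\Omega)$. Passing to a subsequence, $w_{n,j} \rightharpoonup w_j^*$ weakly in $H_0^1$ and strongly in $L^2$, so the weak limit satisfies $-\Delta w_j^* + \la_j^* w_j^* = 0$.

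\textbf{Non-triviality of $w_j^*$.} This is the technical heart. Testing the rescaled equation against $w_{n,j}$ itself yields $\|\nabla w_{n,j}\|_2^2 + \la_{n,j}\|w_{n,j}\|_2^2 \to 0$; combined with the normalization $\|\nabla w_{n,j}\|_2^2 + \|w_{n,j}\|_2^2 = 1$ this gives $1 + (\la_{n,j} - 1)\|w_{n,j}\|_2^2 \to 0$. The strong $L^2$-convergence then determines $\|w_j^*\|_2^2 = 1/(1-\la_j^*)$ whenever $\la_j^* \neq 1$ and excludes $\la_j^* = 1$ as an immediate contradiction. In particular $w_j^* \not\equiv 0$ and $\la_j^*$ therefore belongs to the Dirichlet spectrum of $-\Delta$ in $H_0^1(\Omega)$ (up to the sign convention implicit in \eqref{eq:mainequation}).

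\textbf{Reverse direction and main obstacle.} For the reverse inclusion, fix an eigenvalue tuple $\vec{\la}^*$, let $\vec{c}_n \to \vec{0}^+$ satisfy the smallness hypothesis of Theorem~\ref{thm1.1} (resp.\ Theorem~\ref{thm1.2}), and select $\vec{u}_n$ to be the nontrivial normalized solution at a vector-link min-max level built from eigenfunctions associated to $\vec{\la}^*$. An energy estimate showing that the vector-link min-max values vanish as $\vec{c} \to \vec{0}^+$ forces $\vec{u}_n \to 0$ in $H_0^1$, and the forward direction identifies the limiting $\vec{\la}_n$ as an eigenvalue tuple; the freedom in choosing the linking index and the eigenfunction components sweeps out every tuple. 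The main obstacle is the non-triviality step: with no a priori control on how fast each $u_{n,j}$ decays, only the component-wise $H_0^1$-normalization combined with a Pohozaev-like test against $w_{n,j}$ pins down $w_j^*$. A secondary subtlety arises at the Sobolev-critical dimension $N=4$, where the lack of compactness of $H_0^1 \hookrightarrow L^4$ is bypassed because the definition of bifurcation point already supplies the strong convergence $\|u_{n,k}\|_{L^4} \to 0$, not just boundedness.
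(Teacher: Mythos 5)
Your forward direction is a reasonable sketch --- indeed the paper simply declares it ``relatively standard,'' and your rescaling to $w_{n,j} = u_{n,j}/\|u_{n,j}\|_{H^1}$ together with the identity $1 + (\la_j^* - 1)\|w_j^*\|_2^2 = 0$ is a clean way to force $w_j^* \ne 0$ (the paper rescales by $\sqrt{c_j}$ instead, which in the normalized-solution setting makes $\|v_{\vec{c},i}\|_{L^2} = 1$ automatic, but both choices work). The sign of the Dirichlet eigenvalue versus the sign of $\la_j^*$ needs to be tracked carefully --- the linearized limit equation is $-\Delta w_j^* = -\la_j^* w_j^*$, so it is $-\la_j^*$ that lies in the spectrum --- but the paper has the same tension in its statement of Theorem~\ref{limitbehaviorhsignchanging} versus the computations in Section~\ref{limit}, so I will not dwell on it.

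The reverse direction, however, contains the genuine gap. After producing the normalized solutions $\vec{u}_{\vec{c}}$ at the link level built from $\mathbb{M}_{k+1}$ and observing $\vec{u}_{\vec{c}} \to 0$ via the energy estimate, you assert that ``the forward direction identifies the limiting $\vec{\la}_n$ as an eigenvalue tuple; the freedom in choosing the linking index and the eigenfunction components sweeps out every tuple.'' This does not follow. Step~2 of the paper's Proposition~\ref{propbifurcationsign-changing} only shows that along a subsequence each $-\la_{\vec{c},i}$ converges to \emph{some} Dirichlet eigenvalue $\Lambda_{j_i}$; there is no a priori reason these coincide with $\Lambda_k$ (they could, for instance, all collapse to $\Lambda_1$ no matter which linking index $k$ you chose). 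The only constraint you get from the energy asymptotics is the single scalar equation
\begin{equation*}
\sum_{i=1}^m t_i \bigl(\Lambda_k - \Lambda_{j_i}\bigr) = 0, \qquad t_i := \frac{c_i}{\sum_l c_l},
\end{equation*}
which is one linear condition on $m$ unknown eigenvalue indices and does not pin them down. The paper's Step~3 resolves this with a measure-theoretic/dimension-counting argument: if no path $\vec{c} \to \vec{0}^+$ produces $\Lambda_{j_1} = \cdots = \Lambda_{j_m} = \Lambda_k$, then every admissible direction $\vec{t}$ in the $(m-1)$-dimensional simplex slice $T$ must lie in one of countably many $(m-2)$-dimensional hyperplane slices $T_{\vec{a}}$, which contradicts positivity of the $(m-1)$-dimensional measure of $T$. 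Your proposal contains no counterpart of this argument, and without it the reverse inclusion is not established. A secondary omission is the case of eigenvalue tuples with distinct indices $k_1,\ldots,k_m$ or with some components equal to $\Lambda_1$: the paper handles the former by modifying $\HR_k$ to $\HR_{\vec{k}} = H_{k_1}\times\cdots\times H_{k_m}$ and the latter via the semi-nodal construction (Proposition~\ref{propbifurcationsemi-nodal}), neither of which your sketch addresses beyond the parenthetical ``(resp.\ Theorem~\ref{thm1.2})''.
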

	
	\begin{theorem}\label{thm:semi-trivialbifurpoint}
		Assume that $N=3,4,$ and that \eqref{assumption} holds. Then the set of semi-trivial bifurcation points of system \eqref{eq:mainequation} is exactly
		\[
		\bigg\{ (\lambda_{1},\cdots,\lambda_{m}) \in \R^m: \exists\ i \in \big\{1,\cdots,m\big\}, ~ \la_i \text{ is an eigenvalue of $-\Delta$ on } H_0^1(\Omega)\bigg\}.
		\]
		Moreover, for any given  positive integer $1 \le d \le m-1$, the set of $d$-semi-trivial bifurcation points of system \eqref{eq:mainequation} is exactly
		\[
		\bigg\{ (\lambda_{1},\cdots,\lambda_{m}) \in \R^m: \exists\ i_1 < \cdots < i_d \in \big\{1,\cdots,m\big\}, ~ \la_{i_1}, \cdots, \la_{i_d} \text{ are eigenvalues of $-\Delta$ on } H_0^1(\Omega)\bigg\}.
		\]
	\end{theorem}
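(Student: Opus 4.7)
The plan is to establish the ``Moreover'' part characterizing $d$-semi-trivial bifurcation points; the first statement then follows by taking $d=1$ for the sufficiency and by a pigeonhole argument (fixing the set of nontrivial components along a subsequence) for the necessity.

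For the necessary direction, suppose $\vec{\la}^*$ is a $d$-semi-trivial bifurcation point witnessed by $(\vec{\la}_n,\vec{u}_n)$. Up to a subsequence, the set of nontrivial indices of $\vec{u}_n$ is a fixed $I=\{i_1,\ldots,i_d\}$. For each $k$ set $v_{n,i_k}:=u_{n,i_k}/\|u_{n,i_k}\|_{H_0^1}$; dividing the $i_k$-th equation of \eqref{eq:mainequation} by $\|u_{n,i_k}\|_{H_0^1}$ gives
\begin{equation}
-\Delta v_{n,i_k}+\la_{n,i_k}v_{n,i_k}=\sum_{j=1}^m\beta_{j,i_k}\,u_{n,j}^2\, v_{n,i_k}.
\end{equation}
Because $N\in\{3,4\}$ the embedding $H_0^1(\Omega)\hookrightarrow L^4(\Omega)$ is continuous, so $\|u_{n,j}\|_4\to 0$ and the right-hand side vanishes in $H^{-1}(\Omega)$. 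Rellich compactness delivers $v_{n,i_k}\rightharpoonup v_{i_k}$ in $H_0^1$ and strongly in $L^p$ for $p<2^*$, and the limit satisfies $-\Delta v_{i_k}+\la_{i_k}^*v_{i_k}=0$. Testing the rescaled equation against $v_{n,i_k}$ yields $1+\la_{n,i_k}\|v_{n,i_k}\|_2^2=o(1)$, which forces $\la_{i_k}^*\neq 0$ and $\|v_{i_k}\|_2^2=-1/\la_{i_k}^*>0$; in particular $v_{i_k}\not\equiv 0$, so $\la_{i_k}^*$ is an eigenvalue of $-\Delta$ on $H_0^1(\Omega)$ in the sense used throughout the paper for every $k$.

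For the sufficient direction, given that $\la_{i_1}^*,\ldots,\la_{i_d}^*$ are eigenvalues of $-\Delta$ with $I=\{i_1,\ldots,i_d\}$, the key observation is that if all components of $\vec{u}$ outside $I$ are set to zero then \eqref{eq:mainequation} reduces to a $d$-coupled Gross--Pitaevskii system of exactly the same form, with coupling matrix $(\beta_{i_k,i_l})$ and self-interactions $(\mu_{i_k})$. This reduced system still satisfies \eqref{assumption}, so Theorem \ref{limitbehaviorhsignchanging} applied to it produces a sequence of \emph{nontrivial} solutions $\vec{w}_n$ in $H_0^1(\Omega,\R^d)$ with Lagrange multipliers $(\mu_{n,1},\ldots,\mu_{n,d})\to(\la_{i_1}^*,\ldots,\la_{i_d}^*)$ and $\vec{w}_n\to 0$. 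Embedding back into the full $m$-system by setting $u_{n,i_k}:=w_{n,k}$ and $u_{n,j}:=0$ for $j\notin I$, with $\la_{n,i_k}:=\mu_{n,k}$ and $\la_{n,j}:=\la_j^*$ otherwise, yields a $d$-semi-trivial solution of \eqref{eq:mainequation} with support exactly $I$, converging to $0$ in $H_0^1(\Omega,\R^m)$ with multipliers converging to $\vec{\la}^*$; hence $\vec{\la}^*$ is a $d$-semi-trivial bifurcation point.

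The main obstacle is the necessary direction, and more precisely ensuring the non-triviality of the weak limit $v_{i_k}$; a priori the cubic interaction could absorb the normalization and leave $v_{i_k}\equiv 0$, but the test-function identity $1+\la_{n,i_k}\|v_{n,i_k}\|_2^2=o(1)$ together with the Sobolev embedding $H_0^1\hookrightarrow L^4$ in dimensions $N\in\{3,4\}$ rules this out. The sufficient direction is essentially bookkeeping once one notices that the reduction to a $d$-subsystem preserves assumption \eqref{assumption} and thus legitimizes the application of Theorem \ref{limitbehaviorhsignchanging} as a black box.
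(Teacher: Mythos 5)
Your proposal is correct and follows essentially the same route as the paper: the sufficient direction (fix the active index set $I$, pass to the reduced $d$-subsystem which still satisfies \eqref{assumption}, invoke Theorem \ref{limitbehaviorhsignchanging} as a black box, then pad with zeros and with constant multipliers $\la_j^*$ on the trivial slots) is exactly what the paper does. The one place where you add genuine content is the necessary direction, which the paper dispatches with the phrase ``it is relatively standard''; your normalization argument $v_{n,i_k}=u_{n,i_k}/\|u_{n,i_k}\|$, the observation that the cubic right-hand side is $O(\|u_{n,j}\|_4^2)$ in $H^{-1}$, and the identity $1+\la_{n,i_k}\|v_{n,i_k}\|_2^2=o(1)$ (which prevents the weak limit from degenerating) is a clean and correct way to make that step explicit, and it is in the same spirit as Step~2 of Proposition \ref{propbifurcationsign-changing}. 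Two very minor points: your use of $\beta_{i_k,i_k}$ in the summation is an abuse of notation for $\mu_{i_k}$ (harmless, since the abstract uses that convention); and you might state explicitly, as the paper does, that the $d=1$ case of the reduced system is the scalar result so that invoking Theorem \ref{limitbehaviorhsignchanging} with one component is legitimate.
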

	
	\begin{remark}
		{\rm	We remark that a bifurcation point may be both a nontrivial one and a semi-trivial one. As for the specific case of system \eqref{eq:mainequation}, from Theorem \ref{limitbehaviorhsignchanging} and Theorem \ref{thm:semi-trivialbifurpoint} it can be seen that any nontrivial bifurcation point is also a semi-trivial one; but there exist semi-trivial bifurcation points which are not nontrivial ones.}
	\end{remark}

	For topics involving the bifurcation phenomenon of system \eqref{eq:mainequation}, we refer to \cite{Bartsch,BDW,BTW,btzxzw,CZ2013,WZZ} but $\beta=\beta_{ij}=\beta_{ji}$ is taken as the bifurcation parameter in results of all these references. As far as the authors know, there is no bifurcation result in the literature of system \eqref{eq:mainequation} stemming from trivial zero solution with $\vec{\la}$ being the bifurcation parameter; on the contrary, this direction was widely studied for a single equation, see e.g. \cite{CFS,SZ,Stuart1,Stuart2,Stuart3}. We also point out that the classical Crandall-Rabinowitz theory (see \cite{CR1,CR2,Rab}) and abstract results on potential operators (see e.g. \cite{BC,FR,Kie,Rab2}), working for the single equation case, are not applicable to the system \eqref{eq:mainequation} when we take $\vec{\la}$ as the bifurcation parameter. Both nontrivial solutions and semi-trivial solutions may stem from zero solution, which makes the bifurcation phenomenon of system \eqref{eq:mainequation} more complicated and difficult to study. The main idea in this paper to get bifurcation points is to study the limit behaviors of normalized solutions with respect to the mass tending to zero. This idea has been used in \cite{SZ} for a single equation. Precisely in \cite{SZ}, the limit behavior of energy of normalized solutions was shown and using this information the authors got the limit value of Lagrange multipliers. However, for system \eqref{eq:mainequation}, the Lagrange multiplier $\vec{\la}$ is $m$-dimensional, so the information of limit behavior of energy is not sufficient to obtain the precise limit value of $\vec{\la}$. New ideas are needed to solve this issue. Additionally, as mentioned in Remark \ref{rmk:mdimension} later, the bifurcation phenomenon from the nontrivial bifurcation points given in Theorem \ref{limitbehaviorhsignchanging} for system \eqref{eq:mainequation} is $m$-dimensional. The dimension from semi-trivial bifurcation points given in Theorem \ref{thm:semi-trivialbifurpoint} can also be described.

	\vskip0.15in
	This article is organized as follows. In Section \ref{seclink}, we introduce the notion of vector linking and partial vector linking, the former is used to construct minimax values that yield sign-changing normalized solutions for \eqref{eq:mainequation}, and the latter derives minimax values that produce semi-nodal normalized  solutions for \eqref{eq:mainequation}. In Section \ref{Sect2}, we generalize the well-known Br\'ezis-Martin result which is stated for a Banach space to a product manifold setting where the mass constrained problem for Schr\"odinger systems is included. This provides a general framework to prove some set is invariant under a descending flow on a product manifold. Using the abstract result established in Section \ref{Sect2}, we construct a descending flow with gradient field when all  $\mu_i >0$ and $\beta_{ij} > 0 ~ \forall i \neq j$ and with a good pseudogradient field when $\mu_i < 0$ for some $i$ or $\beta_{ij} < 0$ for some $i\neq j $, and succeed to find some invariant sets under this descending flow in Section \ref{secdescendingflow}. We complete the proof of Theorem \ref{thm1.1} and Theorem \ref{thm1.2} in Section \ref{secsign-changing} and \ref{secsemi-nodal} respectively. Finally, the analysis of limit behaviors is completed in Section \ref{limit}, and we provide the proof of Theorem \ref{limitbehaviorhsignchanging} and Theorem \ref{thm:semi-trivialbifurpoint}.
	
	\vskip0.1in
	We end this introduction by introducing the main functional spaces and some notations.
	\begin{itemize}
		\item We endow the Sobolev space  $ H_0^1(\Omega)$   with scalar products and norms
		\begin{align*}
			\langle u,v \rangle = \int_{\Omega}\nabla u \cdot \nabla v ~dx,\quad  \text{ and } \quad  \ \|u\|: = \|u\|_{H_0^1(\Omega)} =  \langle u,u \rangle^{\frac{1}{2}}.
		\end{align*}
		\item We consider the product space $	\HR := H_0^1(\Omega,\R^m) $, with standard scalar products and norms
		\begin{align*}
			\langle \vec{u},\vec{v} \rangle = \sum_{i=1}^m \langle u_i,v_i \rangle, \quad  \text{ and } \quad 	\|\vec{u}\|=  \sbr{\sum_{i=1}^m  \|u_i\|^2}^{\frac{1}{2}}  .
		\end{align*}
		\item Let $\mathcal{C}_N$ be the Sobolev best constant of $H_0^1(\Omega)\hookrightarrow L^4(\Omega)$,
		\begin{equation} \label{defi of CN}
			\mathcal{C}_N :=\inf_{u \in H_0^1(\Omega) \setminus \{0\} } \cfrac{\left( \int_{\Omega} |\nabla u|^2 dx\right) ^{1/2}}{\left(\int_{\Omega} |u|^4dx\right)^{1/4}}.
		\end{equation}
	\end{itemize}

	\section{Vector linking and the minimax level} \label{seclink}

	\subsection{Vector linking}
	
	We introduce a new notion of (partial) vector linking on product manifolds in an abstract setting. Let $\mathcal{M}_i$  ($i \in \lbr{ 1,\cdots,m}$) stand for some $C^2$-Finsler manifolds, and let us define
	\begin{align*}
		\mathcal{M}^{(m)} := \mathcal{M}_1 \times \mathcal{M}_2 \times \cdots \times \mathcal{M}_m.
	\end{align*}
	
	\begin{definition}[Abstract vector linking] \label{def:link}
		{\rm Let $A$ be a compact subset of $\mathcal{M}^{(m)}$ with $D \subset A$. Then the set $A$ is said to be linked to a subset $S \subset\mathcal{M}^{(m)} $, if $D \cap S = \emptyset$, and for each $h \in C(A,\mathcal{M}^{(m)})$ such that $h|_{D} = \textbf{id}$, there holds $$h(A) \cap S \neq \emptyset.$$}
	\end{definition}
	
	\begin{definition}[Abstract partial vector linking] \label{def:partial link}
		{\rm Let $d$ be an integer with $1 \le d \le m-1$. Define the projection map by
		\begin{align*}
			T_d: \mathcal{M}^{(m)} \to \mathcal{M}_1 \times \cdots \times \mathcal{M}_d, \quad \vec{u} = (u_1,\cdots,u_m) \mapsto (u_1,\cdots,u_d).
		\end{align*}
		Let $A$ be a compact subset of $\mathcal{M}^{(m)}$ with $D \times \lbr{\vec{v}_{m-d}} \subset A$ where $D \subset \mathcal{M}_1 \times \cdots \times \mathcal{M}_d$ and $\vec{v}_{m-d} \in \mathcal{M}_{d+1} \times \cdots \times \mathcal{M}_m$.  Then the set $A$ is said to be partially $d$-linked to a subset $S \subset \mathcal{M}^{(d)}$, if $D \cap S = \emptyset$, and  for each $h \in C(A,\mathcal{M}^{(m)})$ such that $h|_{D \times \lbr{\vec{v}_{m-d}}} = \textbf{id}$, there holds $$T_d \circ h(A) \cap S \neq \emptyset.$$}
	\end{definition}
	
	\begin{remark}
		{\rm $ $There is no fundamental difference between Definition \ref{def:link} and the usual notion of linking in the literature, by seeing $\mathcal{M}^{(m)}$ as a $C^2$-Finsler manifold; while Definition \ref{def:partial link}, the partial vector linking, is essentially new.}
	\end{remark}
	
	In what follows, we will use Definition \ref{def:link} to construct minimax values that yield sign-changing  normalized solutions for \eqref{eq:mainequation}, and Definition \ref{def:partial link} to derive minimax values that produce semi-nodal  normalized solutions for \eqref{eq:mainequation}.
	
	\subsection{The minimax level for sign-changing normalized solutions}\label{subsection 2.2}

	Recall that the eigenvalues of $-\Delta$ on $H_0^1(\Omega)$ satisfy
	\begin{equation}
		0 < \Lambda_1 < \Lambda_2 \leq \Lambda_3 \leq \cdots.
	\end{equation}
	Let $\{\phi_k\}_{k \in \mathbb{N}}$ be the family of the $L^2$-normalized eigenfunctions of $-\Delta$ on $H_{0}^1({\Omega})$ with $\phi_1 > 0$. Then $$H_0^1(\Omega) = \overline{\text{span}\{\phi_k: k \in \mathbb{N}\}}.$$  In our application later, we  are particularly interested in the integer $k$  such that  $\Lambda_{k } < \Lambda_{k+1}$, of which there are infinitely many since $\Lambda_{k } \to \infty$ as $k \to \infty$.
	\medbreak
	
	For each $k\in \N$,  we define the $k$-dimensional subspace $H_k$ of $H_0^1(\Omega)$ as $H_k :=   {\text{span}\{\phi_1, \cdots, \phi_k\}}$ and its orthogonal complement as  $$H_{k }^\bot := \overline{\text{span}\{\phi_i: i \in \mathbb{N}, i \ge k+1\}}.$$  We then introduce the product spaces
	\begin{equation}\label{defi of Hk}
		\HR_{k}:= \underbrace{H_{k }\times H_{k }\times \cdots \times H_{k }}_{m} , \quad \text{ and } \quad  \HR_{k}^\bot:= \underbrace{H_{k }^\bot\times H_{k }^\bot\times \cdots \times H_{k }^\bot}_{m}.
	\end{equation}
	Then,   for all $k \geq 1$, all the components of fully nontrivial elements of $ \HR_{k}^\bot$ are sign-changing.
	Set
	\begin{equation}
		\mathbb{S}_{ k}  :=S _{\vec{c}} \cap \HR_ {k}, \quad \text{ and } \quad \mathbb{S}_{k}^\bot  :=S _{\vec{c}} \cap \HR_{k}^\bot ,
	\end{equation}
	then  one can observe that $ 	\mathbb{S}_{ k}\cong \mathbb{S}_k^{1 } \times \mathbb{S}_k^{2} \times \cdots \times \mathbb{S}_k^{m} $, where
	\begin{align} \label{defi of Sk}
		\mathbb{S}_k^{i} := \bigg\{u_{i} = \sum_{j=1}^{k }t_{i,j}\sqrt{c_i}\phi_j: \sum_{j=1}^{k }t_{i,j}^2 = 1\bigg\}, \quad i\in \lbr{1,\ldots, m}.
	\end{align}
	For each $i\in \lbr{1,\ldots, m}$, we  introduce
	\begin{align} \label{defi of Mk}
		\mathbb{M}_{k+1}^i := \bigg\{u_i = \sum_{j=1}^{k+1}t_{i,j}\sqrt{c_i}\phi_j: \sum_{j=1}^{k+1}t_{i,j}^2 = 1 ~ \text{ with } ~t_{i,k+1} \geq 0\bigg\},	
	\end{align}
	and  let
	$\mathbb{M}_{k+1}  :=  \mathbb{M}_{k+1}^1 \times \cdots \times \mathbb{M}_{k+1}^{m} $. It follows that $ \mathbb{S}_k \subset \mathbb{M}_{k+1}$ since $\partial \mathbb{M}_{k+1}^i=\mathbb{S}_{k}^i$.
	\medbreak
	Let \begin{align*}
		S_{c_i} := \bigg\{ u \in H_0^1(\Omega): \int_\Omega u^2dx = c_i\bigg\}, \quad i\in\{1,\cdots,m\}.
	\end{align*}
	Then $ 	S_{\vec{c}}= S_{c_1} \times S_{c_2} \times \cdots \times S_{c_m}	$.
	\medbreak
	The abstract Definition \ref{def:link} of vector linking  in the  specific case where $\mathcal{M}_i = S_{c_i}$,  $\mathcal{M}^{(m)}= S_{\vec{c}}$, $D = \partial \mathbb{M}_{k+1}$ and $S = \mathbb{S}_{k}^\bot$ is stated as follows.
	
	\begin{definition}[Vector linking] \label{link}
		{\rm A compact subset $A$ of $S_{\vec{c}}$ with $\partial \mathbb{M}_{k+1} \subset A$ is said to be linked to $\mathbb{S}_{k}^\bot $, if, for any continuous mapping $h \in C(A, {S_{\vec{c}}})$ satisfying $h|_{\partial \mathbb{M}_{k+1}} = \textbf{id}$, there holds $$h(A) \cap\mathbb{S}_{k}^\bot  \not= \emptyset.$$}
	\end{definition}

	Next we show how to construct minimax values at which sign-changing normalized solutions can be found. In this part, we will provide detailed arguments to prove that the minimax level we construct is finite and well defined. Further, we will estimate the minimax level and show that the linking sets have well value-separated property.
	\medbreak
	
	To obtain sign-changing solutions, as in many references such as \cite{BLW,Zou}, we should use cones of positive and negative functions. Precisely, we define
	\begin{equation}
		\mathcal{P}_i:=\lbr{\vec{u} = (u_1,u_2,\cdots,u_m)\in \HR: u_i\geq 0}; \quad  \mathcal{P}:=\bigcup_{i=1}^m (\mathcal{P}_i \cup -\PR_i).
	\end{equation}
	Then all the elements outside $\mathcal{P}$ are sign-changing.  For $\delta>0$, we define $\PR_\delta:=\lbr{ \vec{u} \in \HR: \text{dist}(\vec{u}, \PR) <\delta }$, where
	\begin{equation}
		\begin{aligned}
			& \text{dist}(\vec{u}, \PR):=\min \lbr{\text{dist}(\vec{u}, \PR_i), ~\text{dist}(\vec{u},-\PR_i): i = 1,\ldots,m },\\
			&\text{dist}(\vec{u}, \pm\PR_i) := \inf \lbr{ \|\vec{u}-\vec{v}\|: \vec{v}\in\pm\PR_i } .
		\end{aligned}
	\end{equation}
	Note that
	\begin{align*}
		\PR_\delta = \bigcup_{i = 1}^m \sbr{(\PR_i)_\delta \cup (-\PR_i)_\delta}, \quad (\pm\PR_i)_\delta = \lbr{ \vec{u} \in \HR: \text{dist}(\vec{u}, \pm\PR_i) <\delta }.
	\end{align*}
	Now we introduce the set $S^*(\delta)$ by
	\begin{align}
		S^*(\delta) := \mathcal{H} \backslash \overline{\PR_\delta}.
	\end{align}
	
	When $N = 3,4$, the case is $L^2$-supercritical and the boundedness of Palais-Smale sequences becomes a challenging issue. To avoid this difficulty, for $\vec{c} = \sbr{c_1,\ldots,c_m}\in \sbr{\R^+}^m$, we introduce the following bounded subset of $S_{\vec{c}}$
	\begin{align} \label{defBrho}
		\B_\rho := \lbr{\vec{u} = (u_1,u_2,\cdots,u_m) \in S_{\vec{c}} : \sum_{i=1}^m\int_\Omega |\nabla u_i|^2dx < \rho}.
	\end{align}
	In what follows, we define
	\begin{equation}
		I^+:=\lbr{i\in \lbr{1,\ldots, m}: \mu_i>0}, \quad  I^-:=\lbr{i\in \lbr{1,\ldots, m}: \mu_i<0},
	\end{equation}
	and
	\begin{equation}
		J^+:=\lbr{(i,j)\in \lbr{1,\ldots, m}^2 : \beta_{ij}>0,~i\neq j }, \quad J^-:=\lbr{(i,j)\in \lbr{1,\ldots, m}^2 : \beta_{ij}<0, ~i\neq j}.
	\end{equation} 	
	Throughout this paper, we use the notations \begin{equation}
		\mu_{\max}^+ :=\max \lbr{ 0, ~\mu_i : i\in I^+}\geq 0, \quad  \beta_{\max}^+:=\max \lbr{ 0, ~\beta_{ij} :(i,j)\in J^+} \geq 0,
	\end{equation}
	\begin{equation}
		\mu_{\min}^-  :=\min \lbr{ 0, ~\mu_i : i\in I^-} \leq 0, \quad  \beta_{\min}^- :=\min \lbr{ 0, ~\beta_{ij} : (i,j)\in J^-} \leq 0,
	\end{equation}
	Take
	\begin{equation} \label{defi of rho}
		0< \rho < \frac{2\mathcal{C}_N^4}{4 \sbr{ \mu_{\max}^+ + \beta_{\max}^+} -3 \sbr{	\mu_{\min}^-  + \beta_{\min}^-} }.
	\end{equation}	
	Then we have the following result.
	
	\begin{lemma} \label{lem>0}
		For any $\rho > 0$ satisfying \eqref{defi of rho}, there is a constant $\delta_0 > 0$ such that $${\rm dist}(\mathbb{S}_{k}^\bot \cap \B_\rho,  \mathcal{P}) = \delta_0 > 0.$$
	\end{lemma}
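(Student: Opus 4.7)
The plan is to argue by contradiction. Suppose there exist sequences $\vec u_n \in \mathbb{S}_k^\bot \cap \B_\rho$ and $\vec v_n \in \PR$ with $\|\vec u_n - \vec v_n\| \to 0$. Since $\PR$ is the finite union $\bigcup_i (\PR_i \cup -\PR_i)$, after passing to a subsequence we may assume all $\vec v_n$ lie in a single piece; without loss of generality $\vec v_n \in \PR_i$, so $(v_n)_i \ge 0$ almost everywhere. Then $\|(u_n)_i - (v_n)_i\|_{H_0^1} \to 0$, and by the Sobolev embedding $H_0^1(\Omega) \hookrightarrow L^4(\Omega)$ (with constant $\mathcal{C}_N$) we also obtain $\|(u_n)_i - (v_n)_i\|_{L^4} \to 0$.

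Next I would use the elementary pointwise inequality: whenever $v \ge 0$ one has $(u-v)^- \ge u^-$ a.e., so $\|u^-\|_{L^p} \le \|u-v\|_{L^p}$ for any $p \ge 1$. Applying this with $u = (u_n)_i$, $v = (v_n)_i$, and $p = 4$ yields $\|((u_n)_i)^-\|_{L^4} \to 0$, hence (since $\Omega$ is bounded) $\|((u_n)_i)^-\|_{L^2} \to 0$. Because $\vec u_n \in \B_\rho$, the sequence $(u_n)_i$ is bounded in $H_0^1(\Omega)$, so up to a subsequence $(u_n)_i \rightharpoonup w$ weakly in $H_0^1(\Omega)$ and strongly in $L^2(\Omega)$ by Rellich--Kondrachov. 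The weak closedness of the subspace $H_k^\bot$ gives $w \in H_k^\bot$, strong $L^2$-convergence yields $\int_\Omega w^2\,dx = c_i > 0$, and continuity of $u \mapsto u^-$ on $L^2$ together with $\|((u_n)_i)^-\|_{L^2}\to 0$ forces $w^- = 0$, i.e.\ $w \ge 0$ a.e.

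Finally, $w \in H_k^\bot$ implies in particular $\int_\Omega w\phi_1\,dx = 0$; but $w \ge 0$ and $\phi_1 > 0$ a.e.\ in $\Omega$ then force $w \equiv 0$, contradicting $\|w\|_{L^2}^2 = c_i > 0$. This proves $\mathrm{dist}(\mathbb{S}_k^\bot \cap \B_\rho, \PR) > 0$, and one can set $\delta_0$ to be this positive infimum.

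The argument is essentially elementary; the only places that require a little care are the pointwise inequality $(u-v)^- \ge u^-$ for $v \ge 0$ (together with its consequence in $L^p$), and the verification that all the required closure/convergence properties survive in the Sobolev-critical dimension $N=4$. The latter is fine here because we only use the $L^2$-compactness of the Rellich embedding and the $H_0^1 \hookrightarrow L^4$ embedding (not its compactness), both of which hold on the bounded domain $\Omega$ for $N \in \{3,4\}$.
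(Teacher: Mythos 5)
Your proof is correct and follows essentially the same contradiction scheme as the paper's: bound the sequence in $H_0^1$, pass to a weak $H_0^1$ / strong $L^2$ limit $w$, and observe that $w$ would be a nonnegative (or nonpositive) element of $H_k^\bot$ with $\|w\|_{L^2}^2 = c_i > 0$, which cannot exist. Your version is slightly more explicit than the paper's in two harmless ways: you detour through $L^4$ before converting to $L^2$ (the paper works in $L^2$ directly, which suffices), and you spell out the final contradiction via the $L^2$-orthogonality $\int_\Omega w\,\phi_1\,dx = 0$ and $\phi_1 > 0$, whereas the paper simply invokes the fact that every nontrivial component of an element of $\mathbb{S}_k^\bot$ must change sign.
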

	
	\begin{proof}
		By contradiction, we assume that
		$$
		\text{dist}(\mathbb{S}_{k}^\bot \cap  \B_\rho,  \mathcal{P}) = 0.
		$$
		Then we find $\{\vec{u}_n\} \subset \mathbb{S}_{k}^\bot \cap  \B_\rho$ and  $\{\vec{p}_n\} \subset  \mathcal{P}$ such that $\|\vec{u}_n - \vec{p}_n\| \to 0$, and so $\|\vec{u}_n - \vec{p}_n\|_{L^2(\Omega,\R^m)} \to 0$. From  $\{\vec{u}_n\} \subset \B_\rho$ we see that $\{\vec{u}_n\}$ is bounded in $\HR$. Up to a subsequence, we  may assume that $\vec{u}_n \rightharpoonup \vec{u}^* \in \HR$; moreover, we have  $\vec{u}_n \to \vec{u}^*$ strongly in $ L^2(\Omega,\R^m)$. Then we observe that $\vec{u}^* \in \mathbb{S}_{k}^\bot$ and that $ \vec{u}^* \in	S_{\vec{c}} $, which implies that all components of   $\vec{u}^*$  are nontrivial and sign-changing. On the other hand, we have
		\begin{align*}
			\|\vec{u}^* - \vec{p}_n\|_{L^2(\Omega,\R^m)} \le \|\vec{u}^* - \vec{u}_n\|_{L^2(\Omega,\R^m)} + \|\vec{u}_n - \vec{p}_n\|_{L^2(\Omega,\R^m)} \to 0,
		\end{align*}
		and it follows that $\|u^*_i - p_{n,i}\|_{L^2(\Omega)} \to 0$ for $i  =1,\cdots,m$. From $\vec{p}_n \in \p$ it follows that $p_{n,j}$ is nonnegative or nonpositive for some $j \in \{1,\cdots,m\}$. We have reached a contradiction, since $u^*_j$ is sign-changing. The proof is complete.
	\end{proof}


	\begin{lemma} \label{lemma 2.1}
		Let
		\begin{equation}
			M_0 = M_0(\rho) := \frac12- \frac{1}{4} \sbr{  \mu_{\max}^+ + \beta_{\max}^+  }  \mathcal{C}_N^{-4}\rho  >0,
		\end{equation}
		and
		\begin{equation}\label{defi of M1}
			M_1 = M_1(\rho) := M_0\rho.
		\end{equation}
		Then we have $E(\vec{u}) \geq M_0\|\vec{u}\|^2$ for any $\vec{u} \in \overline{	\B_\rho }$ and  $E(\vec{u}) \geq M_1$ for any $\vec{u} \in \partial	\B_\rho   $.
	\end{lemma}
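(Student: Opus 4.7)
The plan is to bound the negative part of $E(\vec u)$ from below via the Sobolev constant $\mathcal{C}_N$ and then to exploit the gradient ceiling $\|\vec u\|^2\le\rho$ built into $\overline{\B_\rho}$. I expect the argument to be short and to split into three steps.

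First, I would discard all contributions from the negative $\mu_i$ and $\beta_{ij}$: since $\mu_i\le\mu_{\max}^+$ and $\beta_{ij}\le\beta_{\max}^+$ for every $i,j$ (with $\mu_{\max}^+,\beta_{\max}^+\ge 0$ by definition), and since the integrals $\int_\Omega u_i^4\,dx$ and $\int_\Omega u_i^2 u_j^2\,dx$ are non-negative, one has
\[
E(\vec u)\ \ge\ \tfrac12\|\vec u\|^2-\tfrac{\mu_{\max}^+}{4}\sum_{i=1}^m\int_\Omega u_i^4\,dx-\tfrac{\beta_{\max}^+}{4}\sum_{i\ne j}\int_\Omega u_i^2 u_j^2\,dx.
\]

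Second, I would bound each of the two positive remainders by $\mathcal{C}_N^{-4}\|\vec u\|^4$. Using \eqref{defi of CN} in the form $\int_\Omega u_i^4\,dx\le \mathcal{C}_N^{-4}\bigl(\int_\Omega |\nabla u_i|^2\,dx\bigr)^2$, together with $\sum_i a_i^2\le(\sum_i a_i)^2$ for non-negative $a_i$,
\[
\sum_{i=1}^m\int_\Omega u_i^4\,dx\ \le\ \mathcal{C}_N^{-4}\sum_{i=1}^m\Bigl(\int_\Omega|\nabla u_i|^2\,dx\Bigr)^2\ \le\ \mathcal{C}_N^{-4}\|\vec u\|^4.
\]
For the cross term, Hölder's inequality and \eqref{defi of CN} yield $\int_\Omega u_i^2 u_j^2\,dx\le \mathcal{C}_N^{-4}\int_\Omega|\nabla u_i|^2\,dx\cdot\int_\Omega|\nabla u_j|^2\,dx$, and then enlarging $\sum_{i\ne j}$ to $\sum_{i,j}$ gives
\[
\sum_{i\ne j}\int_\Omega u_i^2 u_j^2\,dx\ \le\ \mathcal{C}_N^{-4}\Bigl(\sum_{i=1}^m\int_\Omega|\nabla u_i|^2\,dx\Bigr)^2\ =\ \mathcal{C}_N^{-4}\|\vec u\|^4.
\]
Combining these two bounds I obtain
\[
E(\vec u)\ \ge\ \|\vec u\|^2\Bigl(\tfrac12-\tfrac14(\mu_{\max}^++\beta_{\max}^+)\mathcal{C}_N^{-4}\|\vec u\|^2\Bigr).
\]

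Third, on $\overline{\B_\rho}$ we have $\|\vec u\|^2\le\rho$. Since $-3(\mu_{\min}^-+\beta_{\min}^-)\ge 0$, the assumption \eqref{defi of rho} on $\rho$ implies $(\mu_{\max}^++\beta_{\max}^+)\mathcal{C}_N^{-4}\rho<2$, so the parenthesis above is at least $M_0>0$. This proves $E(\vec u)\ge M_0\|\vec u\|^2$ throughout $\overline{\B_\rho}$. On $\partial\B_\rho$, where $\|\vec u\|^2=\rho$, the same estimate specializes to $E(\vec u)\ge M_0\rho=M_1$. The main obstacle is essentially none — this is a routine Sobolev–Hölder estimate; the one item deserving a moment of care is passing from the diagonal Sobolev inequality to a bound on the cross term without picking up an $m$-dependent combinatorial factor, which is handled by enlarging $\sum_{i\ne j}$ to $\sum_{i,j}$ after the pointwise Hölder step.
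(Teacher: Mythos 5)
Your proof is correct and follows essentially the same route as the paper's: drop the contributions of negative $\mu_i$ and $\beta_{ij}$, control the remaining quartic terms by Sobolev and H\"older inequalities, and use $\|\vec u\|^2\le\rho$ on $\overline{\B_\rho}$ to obtain the factor $M_0$. The paper inserts $\rho$ directly into the intermediate bounds (e.g.\ $\sum_i(\int|\nabla u_i|^2)^2\le\rho\sum_i\int|\nabla u_i|^2$), whereas you first pass through $\|\vec u\|^4$ and then apply $\|\vec u\|^2\le\rho$; this is a cosmetic rearrangement and both yield the identical estimate $E(\vec u)\ge M_0\|\vec u\|^2$.
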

	\begin{proof}
		For any  $u \in \overline{	\B_\rho}$, by using  \eqref{defi of CN}, we obtain
		\begin{equation}\label{e5}
			\sum_{i=1}^m\mu_i\int_\Omega  u_i^4 dx \leq \mu_{\max}^+\mathcal{C}_N^{-4} \sum_{i\in I^+} \sbr{\int_\Omega |\nabla u_i|^2dx}^2 \leq  \mu_{\max}^+  \mathcal{C}_N^{-4} \rho\sum_{i=1}^m \int_\Omega |\nabla u_i|^2dx
		\end{equation}
		and
		\begin{equation} \label{e3}
			\begin{aligned}
				\sum_{i,j=1, j \neq i}^m\beta_{ij}\int_{\Omega}u_i^2u_j^2 dx &\leq 	   \sum_{ (i,j)\in J^+}\beta_{ij}\int_{\Omega}u_i^2u_j^2 dx\leq \beta_{\max}^+ \sum_{(i,j)\in J^+}\sbr{\int_\Omega  u_i^4 dx}^{\frac{1}{2}}\sbr{\int_\Omega  u_j^4 dx}^{\frac{1}{2}}  \\
				&	\leq   \beta_{\max}^+\mathcal{C}_N^{-4} \sum_{(i,j)\in J^+} \sbr{\int_\Omega |\nabla u_i|^2dx\int_\Omega |\nabla u_j|^2dx}\\
				& \leq\beta_{\max}^+C_{N}^{-4}\left(\sum_{i=1}^m\int_{\Omega}\lvert\nabla u_{i}\rvert^2dx\right)\left(\sum_{j=1}^m\lvert\nabla u_{j}\rvert^2dx\right)\\
				&\leq \beta_{\max}^+\mathcal{C}_N^{-4} \rho \sum_{i=1}^m \int_\Omega |\nabla u_i|^2dx ,
			\end{aligned}
		\end{equation}
		from which we get
		\begin{equation}
			\begin{aligned}
				E(\vec{u})& = \frac12\sum_{i=1}^m\int_\Omega |\nabla u_i|^2dx -  \frac14\sum_{i=1}^m \mu_i\int_\Omega  u_i^4dx  -   \frac14\sum_{i,j=1, j \neq i}^m\beta_{ij}\int_\Omega u_i^2u_j^2 dx  \\
				& \geq \mbr{\frac12- \frac{1}{4} \sbr{ \mu_{\max}^+ + \beta_{\max}^+ }  \mathcal{C}_N^{-4}\rho }\sum_{i=1}^m\int_\Omega |\nabla u_i|^2dx
				=  M_0\|\vec{u}\|^2 .
			\end{aligned}
		\end{equation}
		Hence, for any $u \in \partial\B_\rho$, we have  $E(\vec{u}) \geq M_1=M_0\rho$. This completes the proof.
	\end{proof}
	\vskip0.15in
	At this point, we define
	\begin{align*}
		\B_\rho^{M_1} := \bigl\{\vec{u} \in \B_\rho: E(\vec{u}) < M_1\bigr\}.
	\end{align*}
	By the choice of $M_1$, readers can see that the set $\B_\rho^{M_1}$ is invariant under a descending flow, which is the main reason why we introduce $\B_\rho^{M_1}$. Then we consider
	\begin{align} \label{def:lrhok}
		\mathcal{L}_{\rho,k} := \lbr{A \subset \B_\rho: \sup_{\vec{u} \in A}  E(\vec{u}) < M_1, ~ A \text{ is compact and linked to } \mathbb{S}_{k}^\bot},
	\end{align}
	where $M_1$ is the constant depending on $\rho$ given in Lemma \ref{lemma 2.1}, and define a minimax level $m_{\vec{c},\rho,k}^\delta$ by
	\begin{align} \label{def:mcrhokdelta}
		m_{\vec{c},\rho,k}^\delta := \inf_{A \in \mathcal{L}_{\rho,k}}\sup_{\vec{u} \in A \cap S^*(\delta)}E(\vec{u}).
	\end{align}
	
	\begin{lemma} \label{lemma 2.2}
		Let $k$ be a  positive integer such that  $\Lambda_k<\Lambda_{k+1}$.
		For any $\rho>0$ and  $\vec{c} = (c_1,\cdots,c_m)\in \sbr{\R^+}^m$ satisfying \eqref{defi of rho},
		\begin{align} \label{eqci1}
			\frac{1}{4}\mbr{2- \sbr{ \mu_{\min}^- + \beta_{\min}^-} \mathcal{C}_N^{-4} \rho }\Lambda_{k+1}  \sum_{i= 1}^m  c_i   <M_1(\rho)
	\end{align}	
	and
	\begin{align}\label{eqci2}
		& {\sbr{\beta_{\max}^+-\mu_{\min}^- - \beta_{\min}^-} \mathcal{C}_N^{-4} \rho }\Lambda_{k+1} { \sum_{i=1}^m c_i} +  \mu_{\max}^+\mathcal{C}_N^{-4}\Lambda_{k+1} ^2  \sum_{i=1}^m{c_i }^2
		\\
		&<\mbr{2- \sbr{ \mu_{\min}^- + \beta_{\min}^-} \mathcal{C}_N^{-4} \rho }(\Lambda_{k +1} -\Lambda_{k})\min_{1 \le i \le m} c_i
\end{align}
we have	
\begin{equation}
	\mathbb{S}_{k+1} \subset 	\B_\rho , \quad \sup\limits_{\mathbb{S}_{k+1}}E(\vec{u}) < M_1, \quad \displaystyle \sup_{\partial \mathbb{M}_{k+1}}E(\vec{u}) < \inf_{  	\B_\rho \cap\mathbb{S}_{k}^\bot}E(\vec{u}).
\end{equation}
\end{lemma}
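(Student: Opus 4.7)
The plan is to combine two spectral ingredients with the Sobolev inequality constant $\mathcal{C}_N$ defined in \eqref{defi of CN}. For $u\in H_{k+1}$ with $\int u^2=c$ the Rayleigh quotient gives $\|u\|^2\le\Lambda_{k+1}c$, while for $u\in H_k^\bot$ with the same mass, the Poincar\'e-type inequality gives $\|u\|^2\ge\Lambda_{k+1}c$. The decisive spectral-gap improvement for elements of $\partial\mathbb{M}_{k+1}$ is that at least one component $u_i$ must lie in $\mathbb{S}_k^i\subset H_k$, for which $\|u_i\|^2\le\Lambda_kc_i$, while the remaining components only obey $\|u_j\|^2\le\Lambda_{k+1}c_j$; summing yields
\[
\|\vec u\|^2 \;\leq\; \Lambda_{k+1}\sum_{i=1}^m c_i \;-\; (\Lambda_{k+1}-\Lambda_k)\min_{1\le i\le m} c_i,
\]
and this is the engine driving the third assertion.

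First I would dispatch the two easier conclusions. Since $[2-(\mu_{\min}^-+\beta_{\min}^-)\mathcal{C}_N^{-4}\rho]\ge 2$, hypothesis \eqref{eqci1} forces $2\Lambda_{k+1}\sum_i c_i<4M_0\rho\le 2\rho$, so every $\vec u\in\mathbb{S}_{k+1}$ satisfies $\|\vec u\|^2\le\Lambda_{k+1}\sum_i c_i<\rho$; this establishes $\mathbb{S}_{k+1}\subset\B_\rho$. For the energy bound I would mirror the sign-discarding argument in the proof of Lemma \ref{lemma 2.1}: drop the favourable $I^+$, $J^+$ contributions, dominate the remainder by $|\mu_{\min}^-|\sum_i\int u_i^4$ and $|\beta_{\min}^-|\sum_{i\ne j}\int u_i^2u_j^2$, and apply Sobolev together with $\|u_i\|^2\le\rho$ to obtain
\[
E(\vec u) \;\le\; \tfrac{1}{4}\bigl[2-(\mu_{\min}^-+\beta_{\min}^-)\mathcal{C}_N^{-4}\rho\bigr]\|\vec u\|^2 \;\le\; \tfrac{1}{4}\bigl[2-(\mu_{\min}^-+\beta_{\min}^-)\mathcal{C}_N^{-4}\rho\bigr]\Lambda_{k+1}\sum_i c_i,
\]
which is strictly below $M_1$ by \eqref{eqci1}.

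The third conclusion is the main content and uses both spectral features. For $\sup_{\partial\mathbb{M}_{k+1}} E$ I would apply the same energy upper bound as above, but now insert the sharper norm estimate for $\partial\mathbb{M}_{k+1}$ displayed in the first paragraph. For $\inf_{\B_\rho\cap \mathbb{S}_k^\bot} E$ the asymmetry is reversed: I would drop the $I^-$, $J^-$ contributions instead and dominate the remainder by $\mu_{\max}^+$ and $\beta_{\max}^+$. To extract the sharp condition \eqref{eqci2} the Sobolev bounds must be split --- the cross term $\sum_{i\ne j}\int v_i^2 v_j^2$ is handled crudely by $\mathcal{C}_N^{-4}\rho\|\vec v\|^2$, whereas the diagonal quartic is treated separately through $\sum_i\int v_i^4\le\mathcal{C}_N^{-4}\sum_i\|v_i\|^4$, kept in this form so as to expose the $\mu_{\max}^+\mathcal{C}_N^{-4}\Lambda_{k+1}^2\sum_i c_i^2$ contribution, and then combined with the Poincar\'e bound $\|\vec v\|^2\ge\Lambda_{k+1}\sum_i c_i$ on $\mathbb{S}_k^\bot$. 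Subtracting the upper bound from this lower bound and rearranging reduces the inequality $\sup<\inf$ to precisely \eqref{eqci2}.

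\textbf{Main obstacle.} The delicate part is bookkeeping of Sobolev constants so that the nonlinear slack from the four quantities $\mu_{\max}^+,\mu_{\min}^-,\beta_{\max}^+,\beta_{\min}^-$ is strictly dominated by the spectral-gap gain $(\Lambda_{k+1}-\Lambda_k)\min_i c_i$ on both sides simultaneously; in particular one must resist the temptation to collapse every quartic into $\rho\|\vec v\|^2$, since doing so yields a strictly weaker condition than \eqref{eqci2}. The choice of $\rho$ imposed by \eqref{defi of rho} --- which keeps $M_0>0$ and keeps the Sobolev contribution strictly subcritical --- is essential, and together with the mass hypotheses \eqref{eqci1}--\eqref{eqci2} it is exactly what is needed to push the desired separation past the eigenvalue gap.
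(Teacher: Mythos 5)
Your proposal follows the paper's route faithfully in structure: the same sign-splitting of $I^\pm$, $J^\pm$ for the upper and lower bounds on $E$, the same spectral bounds $\|u\|^2\le\Lambda_{k+1}c$ on $H_{k+1}$ and $\|u\|^2\ge\Lambda_{k+1}c$ on $H_k^\bot$, the same sharpened norm estimate $\|\vec u\|^2\le\Lambda_{k+1}\sum c_i-(\Lambda_{k+1}-\Lambda_k)\min c_i$ on $\partial\mathbb M_{k+1}$, and the same observation that the diagonal quartic must be kept separate to produce $\sum c_i^2$ in \eqref{eqci2}. The first two conclusions are handled correctly.

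There is, however, a genuine gap in your argument for the lower bound on $\inf_{\B_\rho\cap\mathbb S_k^\bot}E$. You propose to use the aggregate Poincar\'e bound $\|\vec v\|^2\ge\Lambda_{k+1}\sum_i c_i$ for the positive term and to ``expose'' $\mu_{\max}^+\mathcal{C}_N^{-4}\Lambda_{k+1}^2\sum_i c_i^2$ out of $\sum_i\|v_i\|^4$, but Poincar\'e goes the \emph{wrong} way for that second term: on $\mathbb S_k^\bot$ one has $\|v_i\|^2\ge c_i\Lambda_{k+1}$, hence $\sum_i\|v_i\|^4\ge\Lambda_{k+1}^2\sum_i c_i^2$, whereas a lower bound on $E$ needs an \emph{upper} bound on the quartic. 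The two terms cannot be optimized independently. The paper's resolution is component-wise: rewrite the lower bound as $\tfrac14\sum_i\bigl[(2-\beta_{\max}^+\mathcal{C}_N^{-4}\rho)\|v_i\|^2-\mu_{\max}^+\mathcal{C}_N^{-4}\|v_i\|^4\bigr]$, observe that the quadratic $t\mapsto(2-\beta_{\max}^+\mathcal{C}_N^{-4}\rho)t-\mu_{\max}^+\mathcal{C}_N^{-4}t^2$ is increasing on $[0,\rho]$ precisely because of \eqref{defi of rho}, and use $\|v_i\|^2\in[c_i\Lambda_{k+1},\rho]$ to evaluate at the left endpoint $t=c_i\Lambda_{k+1}$. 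You gesture at this in your obstacle paragraph (``keeps the Sobolev contribution strictly subcritical''), but the monotonicity argument is the crux and must be made explicit; as written, the ``combine with Poincar\'e on the total norm'' step does not deliver \eqref{eqci2}.
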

\begin{proof}
For any $\vec{u}\in 	\mathbb{S}_{k+1}$, observe that for $i \in \lbr{1,\ldots,m}$, there holds
\begin{equation} \label{s1}
	\begin{aligned}
		\int_\Omega|\nabla u_i|^2 dx = \sum_{j=1}^{k+1} t_{i,j}^2 c_i \int_\Omega|\nabla \phi_j|^2 dx = \sum_{j=1}^{k+1} t_{i,j}^2 c_i  \Lambda_j \leq c_i  \Lambda_{k+1}.
	\end{aligned}
\end{equation}
Moreover,  it follows from  \eqref{defi of M1} and \eqref{eqci1}  that $ \Lambda_{k+1}\sum_{i= 1}^m  {c_i }<\rho$. Hence, $$ \nm{ \vec{u}}^2 = \sum_{i=1}^m \int_\Omega|\nabla u_i|^2 dx \leq \Lambda_{k+1}\sum_{i= 1}^m  {c_i }<\rho,$$
from which we get   $	\mathbb{S}_{k+1} \subset 	\B_\rho $.
\medbreak
In the following, for any $\vec{u}\in 	\mathbb{S}_{k+1} \subset \B_\rho$, similar to \eqref{e5} and \eqref{e3},    we deduce from \eqref{eqci1} and \eqref{s1}  that
\begin{equation} \label{e7}
	\begin{aligned}
		E(\vec{u}) &\leq  \frac{1}{2}\sum_{i=1}^m\int_\Omega |\nabla u_i|^2dx  -\frac{1}{4} \sum_{i\in I^-} \mu_i  \int_\Omega   u_i^4 dx -\frac14 \sum_{(i,j)\in J^-} \beta_{ij} \int_{\Omega}u_i^2u_j^2 dx \\
		& \leq  \frac{1}{2}\sum_{i=1}^m\int_\Omega |\nabla u_i|^2dx - \frac{1}{4}\mu_{\min}^- \mathcal{C}_N^{-4} \sum_{i\in I^-} \sbr{ \int_\Omega |\nabla u_i|^2dx}^2\\
		&\qquad -\frac14 \beta_{\min}^-\mathcal{C}_N^{-4} \sum_{(i,j)\in J^-}\sbr{\int_\Omega |\nabla u_i|^2dx\int_\Omega |\nabla u_j|^2dx}\\
		&\leq \frac{1}{4}\mbr{2- \sbr{ \mu_{\min}^- + \beta_{\min}^-} \mathcal{C}_N^{-4} \rho }\sum_{i=1}^m\int_\Omega |\nabla u_i|^2dx \\
		&\leq \frac{1}{4}\mbr{2- \sbr{  \mu_{\min}^- + \beta_{\min}^-} \mathcal{C}_N^{-4} \rho }\Lambda_{k+1}\sum_{i= 1}^m  {c_i } ,
	\end{aligned}
\end{equation}
from which, together with \eqref{eqci1}, we get $ \sup\limits_{\mathbb{S}_{k+1}}E(\vec{u})  < M_1.  $

Finally,  for any $\vec{u} \in	\B_\rho \cap \mathbb{S}_{k}^\bot $,
we have  $c_i \Lambda_{k+1}  \leq  \int_\Omega |\nabla u_i|^2dx  <\rho$ for $i\in \lbr{1,\ldots,m}$. Then 	 from \eqref{defi of rho}, \eqref{e5}, \eqref{e3},    we obtain
\begin{equation} \label{e9}
		\begin{aligned}
			\inf_{  	\B_\rho \cap\mathbb{S}_{k}^\bot}	E(\vec{u}) &\geq 	\inf_{  	\B_\rho \cap\mathbb{S}_{k}^\bot} \frac 14 \sum_{i=1}^m \mbr{  \sbr{2-  \beta_{\max}^+\mathcal{C}_N^{-4}\rho}\int_\Omega |\nabla u_i|^2dx -\mu_{\max}^+\mathcal{C}_N^{-4}  \sbr{\int_\Omega |\nabla u_i|^2dx}^2}
			\\& \geq \frac 14 \sum_{i=1}^m \mbr{  \sbr{2- \beta_{\max}^+\mathcal{C}_N^{-4}\rho}c_i\Lambda_{k+1}  -\mu_{\max}^+\mathcal{C}_N^{-4}  \sbr{c_i\Lambda_{k+1} }^2}.
		\end{aligned}
\end{equation}
The energy lower bound estimate in \eqref{e9} involves the quadratic function
	\begin{align}
		f(x) = A x - B x^2, \quad \text{with } A = 2 - \beta_{\max}^+ C_N^{-4}\rho > 0, \ B = \mu_{\max}^+ C_N^{-4} > 0,
	\end{align}
	where $x = \int_{\Omega} |\nabla u_i|^2 dx$. By the choice of $\rho$ in \eqref{defi of rho}, we have $f'(x) = A - 2Bx > 0$ for all $x < \rho$, which means $f(x)$ is strictly increasing on the interval $[0, \rho]$. Therefore, to obtain the lower bound of $f(x)$, we take the minimum value of $x$, i.e., $x = c_i \Lambda_{k+1}$, which gives
	$$
	f(x) \geq f(c_i \Lambda_{k+1}) = A c_i \Lambda_{k+1} - B (c_i \Lambda_{k+1})^2.
	$$ 
Observe that $$\partial \mathbb{M}_{k+1} =\bigcup_{s=1}^m \sbr{  \mathbb{M}_{k+1}^1\times \cdots\times \partial \mathbb{M}_{k+1}^s \times \cdots\times  \mathbb{M}_{k+1}^m}.$$ Then for any $s\in \lbr{1,\ldots,m}$ and   $\vec{u}=\sbr{u_1,\ldots,u_m} \in  \mathbb{M}_{k+1}^1\times \cdots\times \partial \mathbb{M}_{k+1}^s \times \cdots\times  \mathbb{M}_{k+1}^m$, we have
\begin{equation}
	\begin{aligned}
		\int_\Omega |\nabla u_s|^2dx= \sum_{j=1}^{k } t_{s,j}^2 c_s \int_\Omega|\nabla \phi_j|^2 dx = \sum_{j=1}^{k } t_{s,j}^2 c_s  \Lambda_j \leq c_s  \Lambda_{k}< c_s  \Lambda_{k+1},
	\end{aligned}
\end{equation}
and
\begin{equation}
	\int_\Omega |\nabla u_i|^2dx= \sum_{j=1}^{k+1} t_{i,j}^2 c_i \int_\Omega|\nabla \phi_j|^2 dx = \sum_{j=1}^{k+1 } t_{i,j}^2 c_i  \Lambda_j \leq c_i  \Lambda_{k+1}, \quad \text{ for }~~i\neq s.
\end{equation}
Hence,
\begin{equation}\label{e8}
	\sup_{\partial \mathbb{M}_{k+1}}\sum_{i=1}^m\int_\Omega |\nabla u_i|^2dx \leq {\Lambda_{k+1}  { \sum_{i=1}^m c_i}  - (\Lambda_{k +1} -\Lambda_{k})\min_{1 \le i \le m} c_i}.
\end{equation}
Therefore, by \eqref{e7}, we have
\begin{equation} \label{e10}
		\begin{aligned}
			\sup_{\partial \mathbb{M}_{k+1}}    E(\vec{u})  &\leq \frac{1}{4}\mbr{2- \sbr{  	\mu_{\min}^- +  \beta_{\min}^-} \mathcal{C}_N^{-4} \rho }\sup_{\partial \mathbb{M}_{k+1}}\sum_{i=1}^m\int_\Omega |\nabla u_i|^2dx \\
			&\leq \frac{1}{4}\mbr{2- \sbr{ 	\mu_{\min}^- +  \beta_{\min}^-} \mathcal{C}_N^{-4} \rho }\sbr{\Lambda_{k+1}  { \sum_{i=1}^m c_i}  - (\Lambda_{k +1} -\Lambda_{k})\min_{1 \le i \le m} c_i}.
		\end{aligned}
\end{equation}
Therefore, from \eqref{eqci2}, \eqref{e9}, and \eqref{e10} we obtain  that $ \sup_{\partial \mathbb{M}_{k+1}}E(\vec{u}) < \inf_{  	\B_\rho \cap\mathbb{S}_{k}^\bot}E(\vec{u})$. The proof is completed.
\end{proof}

\begin{remark} \label{rmktildeck}
{\rm Let  $k$ be a  positive integer such that $\Lambda_k<\Lambda_{k+1}$, and  for any $\vec{c} = (c_1,\cdots,c_m) \in \sbr{\R^+}^m$, we  denote $c_{\min} := \min\{c_1,\cdots,c_m\}$, $c_{\max} := \max\{c_1,\cdots,c_m\}$. Then  for  small constant $\tilde c_k > 0$ satisfying $\frac{c_{\max}^2}{c_{\min}} < \tilde c_k$ (hence, $c_{\max}$ is also small), we can always take $\rho > 0$ such that \eqref{defi of rho}, \eqref{eqci1} and \eqref{eqci2} hold true. Indeed, by choosing $\rho$ satisfying
\begin{align}\label{rho}
	\Lambda_{k+1}\sum_{i=1}^mc_i < \frac34 \rho < \rho < 2\Lambda_{k+1}\sum_{i=1}^mc_i,
\end{align}
the relation  \eqref{defi of rho} follows directly since $c_{\max}$ is  small. Then
\begin{align*}
	\Lambda_{k+1}\sum_{i=1}^mc_i <	\frac34 \rho < \frac{ 2-\sbr{\mu_{\max}^+ +\beta_{\max}^+ }\mathcal{C}_N^{-4} \rho}{2-\sbr{	\mu_{\min}^- +  \beta_{\min}^-}\mathcal{C}_N^{-4} \rho},
\end{align*}
from which  we obtain the relation  \eqref{eqci1}. Finally, observe that \eqref{eqci2} is equivalent to
\begin{equation} \label{e13}
	\begin{aligned}
		\rho   <\overline{A} :=\frac{\mathcal{C}_N^{4}}{A_1-A_2 } \sbr{ 2 (\Lambda_{k +1} -\Lambda_{k})\frac{c_{\min}}{\sum_{i=1}^mc_i^2} -   \mu_{\max}^+ \mathcal{C}_N^{-4}\Lambda_{k+1} ^2 } \frac{\sum_{i=1}^mc_i^2}{\sum_{i=1}^mc_i },
	\end{aligned}
\end{equation}
where
\begin{equation}
	\begin{aligned}
		A_1&:= \sbr{\beta_{\max}^+-	\mu_{\min}^- -  \beta_{\min}^-}\Lambda_{k+1}>0,\\
		A_2&:= -\sbr{	\mu_{\min}^- +  \beta_{\min}^-}(\Lambda_{k +1} -\Lambda_{k})\frac{c_{\min}}{\sum_{i=1}^mc_i } >0.
	\end{aligned}
\end{equation}
Obviously,  $0<A_1-A_2<A_1$.
Since $\frac{c_{\max}^2}{c_{\min}}$ is small, we can assume that $\frac{c_{\min}}{\sum_{i=1}^mc_i^2}$ is large such that
\begin{equation}
	\begin{aligned}
		\frac{1}{m} \frac{\mathcal{C}_N^{4}}{A_1   } \sbr{ 2 (\Lambda_{k +1} -\Lambda_{k})\frac{c_{\min}}{\sum_{i=1}^mc_i^2} -   \mu_{\max}^+ \mathcal{C}_N^{-4}\Lambda_{k+1} ^2 }> 2\Lambda_{k +1}.
	\end{aligned}
\end{equation}
Then  \eqref{e13} holds, because \begin{align*}
	\frac{\sum_{i=1}^mc_i^2}{\sum_{i=1}^mc_i} \ge \frac{1}{m}\sum_{i=1}^mc_i.
\end{align*}
Therefore, all the requirements in \eqref{defi of rho}, \eqref{eqci1} and \eqref{eqci2} are satisfied by taking $\rho$ with \eqref{rho}.}
\end{remark}

\begin{lemma} \label{lemnemp}
For any $\rho$ and  $\vec{c} = (c_1,\cdots,c_m)\in \sbr{\R^+}^m$ satisfying \eqref{defi of rho},  \eqref{eqci1} and \eqref{eqci2}, we have $\mathbb{M}_{k+1} \in \mathcal{L}_{\rho,k}$, and so, $\mathcal{L}_{\rho,k} \neq \emptyset$.
\end{lemma}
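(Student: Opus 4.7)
The plan is to check each of the four conditions defining membership in $\mathcal{L}_{\rho,k}$ in \eqref{def:lrhok}: namely $\mathbb{M}_{k+1}\subset\B_\rho$, $\sup_{\mathbb{M}_{k+1}}E<M_1$, compactness of $\mathbb{M}_{k+1}$, and the linking of $\mathbb{M}_{k+1}$ to $\mathbb{S}_k^\bot$. The first three follow by direct reuse of the computations of Lemma~\ref{lemma 2.2}; the substance of the proof lies in the linking.

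For the routine part, any $\vec{u}\in\mathbb{M}_{k+1}$ decomposes as $u_i=\sum_{j=1}^{k+1}t_{i,j}\sqrt{c_i}\phi_j$ with $\sum_j t_{i,j}^2=1$, so the Parseval calculation \eqref{s1} still gives $\int_\Omega|\nabla u_i|^2dx\le c_i\Lambda_{k+1}$; summing over $i$ and invoking \eqref{eqci1} together with $M_1=M_0\rho$ yields $\|\vec{u}\|^2<\rho$, hence $\mathbb{M}_{k+1}\subset\B_\rho$. The identical chain of estimates \eqref{e7} then carries over verbatim to produce $\sup_{\mathbb{M}_{k+1}}E<M_1$. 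Compactness is automatic because each $\mathbb{M}_{k+1}^i$ is a closed hemisphere in the finite-dimensional subspace $H_{k+1}\subset H_0^1(\Omega)$.

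For the linking I would fix an arbitrary $h\in C(\mathbb{M}_{k+1},S_{\vec{c}})$ with $h|_{\partial\mathbb{M}_{k+1}}=\mathbf{id}$. Since $h(\vec{u})\in\mathbb{S}_k^\bot$ is precisely the vanishing of the first $k$ Fourier coefficients of each component of $h(\vec{u})$, I introduce the finite-dimensional reduction
\begin{align*}
\Psi:\mathbb{M}_{k+1}\to\R^{mk},\qquad \Psi(\vec{u})_{i,j}:=\frac{1}{\sqrt{c_i}}\int_\Omega \phi_j\,h_i(\vec{u})\,dx,\quad 1\le i\le m,\ 1\le j\le k,
\end{align*}
together with its boundary companion $\phi(\vec{u})_{i,j}:=t_{i,j}$. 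The assignment $\vec{u}\mapsto(t_{i,j})_{i\le m,\,j\le k}$ identifies $\mathbb{M}_{k+1}$ with a product of $m$ closed unit disks in $\R^k$, hence with a closed ball $D^{mk}\subset\R^{mk}$; in these coordinates $\phi$ is the inclusion, $\partial\mathbb{M}_{k+1}$ is the boundary sphere, and the pole $(\sqrt{c_i}\phi_{k+1})_{i=1}^m$ is the origin, so $\mathrm{deg}(\phi,\mathbb{M}_{k+1},0)=\pm 1$. On $\partial\mathbb{M}_{k+1}$ the hypothesis $h=\mathbf{id}$ gives $\Psi=\phi$, and on every boundary face at least one index $s$ satisfies $t_{s,k+1}=0$, forcing $\sum_{j=1}^k t_{s,j}^2=1$ and in particular $\phi\neq 0$. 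The affine homotopy $(1-t)\phi+t\Psi$ therefore avoids the origin on $\partial\mathbb{M}_{k+1}$ for every $t\in[0,1]$, so homotopy invariance of Brouwer degree yields $\mathrm{deg}(\Psi,\mathbb{M}_{k+1},0)=\pm 1\neq 0$. Any zero $\vec{u}^*$ of $\Psi$ then satisfies $h(\vec{u}^*)\in S_{\vec{c}}\cap\HR_k^\bot=\mathbb{S}_k^\bot$, proving the linking.

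The main obstacle is the linking itself, and the subtlety there is purely topological: $\partial\mathbb{M}_{k+1}$ is not a single coordinate sphere but a union of $m$ different faces, one for each hemisphere that may collapse to its equator. The reason the degree argument closes is that on every such face the pinned factor contributes a full unit-length block to $\phi$, so $\phi$ is uniformly nonzero on the whole multi-faced boundary; this is what lets the degree of the coordinate chart be transferred to the nonlinear map $\Psi$.
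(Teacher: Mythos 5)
Your proposal is correct and follows essentially the same approach as the paper. The paper identifies $\mathbb{M}_{k+1}^i \cong \mathbb{P}_k^i$, works with the orthogonal projection $P_k:\HR\to\HR_k$, and uses the homotopy $g(t,\vec{u}) = (1-t)P_k(h(\vec{u})) + tP_k|_{\mathbb{M}_{k+1}}\vec{u}$, which never vanishes on $\partial\mathbb{M}_{k+1}$ for precisely the reason you give (on each face one component lies in $H_k$ with unit-length coefficient block, so $P_k\vec{u}\neq\vec{0}$), yielding $\deg(P_k\circ h,\mathbb{M}_{k+1},\vec{0})=1$; your map $\Psi$ is just $P_k\circ h$ written in Fourier coordinates and your homotopy $(1-t)\phi+t\Psi$ is the same one with the parameter reversed. (A minor imprecision: the product of $m$ closed disks is a polydisk, not literally a ball $D^{mk}$, but it is homeomorphic to one, which is all the degree argument needs.)
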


\begin{proof}
By Lemma \ref{lemma 2.2}, we have  $\mathbb{M}_{k+1} \subset \mathbb{S}_{k+1} \subset \B_\rho$ and $\sup E(\mathbb{M}_{k+1}) < M_1$. Obviously, we know that  $\mathbb{M}_{k+1} \cap \mathbb{S}_{k}^\bot \neq \emptyset$, which consists of one point denoted by $\sbr{\sqrt{c_1}\phi_{k+1},\ldots,\sqrt{c_m}\phi_{k+1}}$. Then,  for any continuous mapping $h \in C( \mathbb{M}_{k+1},S_{\vec{c}})$  satisfying $h|_{ \partial \mathbb{M}_{k+1} } = \textbf{id}$, we claim that $h(\mathbb{M}_{k+1}) \cap \mathbb{S}_{k}^\bot \not= \emptyset$.  In fact, observe that  $\mathbb{M}_{k+1}^i \cong \mathbb{P}_k^i  \subset H_k$, where
\begin{equation}
	\mathbb{P}_k^i:=\lbr{ u_i = \sum_{j=1}^{k}t_{i,j}\sqrt{c_i}\phi_j: \sum_{j=1}^{k}t_{i,j}^2 \leq 1}, \quad i \in \lbr{1,\ldots, m} .
\end{equation} Hence, $\mathbb{M}_{k+1} \cong \mathbb{P}_k^1\times \cdots \times  \mathbb{P}_{k}^m $, and the topology degree $\deg(P_k \circ h,\mathbb{M}_{k+1} ,\vec{0})$ and $\deg(P_k|_{\mathbb{M}_{k+1} },\mathbb{M}_{k+1},\vec{0})$ are well-defined where $P_k : \HR \to \HR_k$ is the orthogonal projection. Let
\begin{align*}
	g (t,u) := (1-t)P_k(h(\vec{u})) + t P_k|_{\mathbb{M}_{k+1}} \vec{u}, \quad \forall t \in [0,1], \quad \vec{u} \in \mathbb{M}_{k+1}.
\end{align*}
For any $u \in \partial \mathbb{M}_{k+1}$ and $t \in [0,1]$, we have $g(t,u) = P_k u \neq \vec{0}$. Hence, we have
\begin{align*}
	\deg(P_k \circ h,\mathbb{M}_{k+1},\vec{0}) = \deg(P_k|_{\mathbb{M}_{k+1}},\mathbb{M}_{k+1},\vec{0}) = 1,
\end{align*}
and so, there exists a $\vec{u}_0 \in \mathbb{M}_{k+1}$ such that $P_k(h(\vec{u}_0)) = \vec{0}$, i.e., $h(\vec{u}_0) \in  {\HR_k^\bot} \cap S_{\vec{c}}=\mathbb{S}_{k}^\bot$. This yields that $h(\mathbb{M}_{k+1}) \cap   \mathbb{S}_{k}^\bot \not= \emptyset$ and we complete the proof.
\end{proof}

\begin{proposition} \label{propwelldefined}
For any $\rho$ satisfying \eqref{defi of rho} and for any  $\vec{c} = (c_1,\cdots,c_m) \in \sbr{\R^+}^m$ with  \eqref{eqci1}, \eqref{eqci2}, then the value $m_{\vec{c},\rho,k}^\delta$ is well defined and finite for $0 < \delta < \delta_0$, where $\delta_0$ is given in Lemma \ref{lem>0}. Moreover,
\begin{align} \label{eqestiofm}
	\sup_{\partial \mathbb{M}_{k+1}}E(\vec{u}) < \inf_{  	\B_\rho \cap \mathbb{S}_{k}^\bot}E(\vec{u}) \le m_{\vec{c},\rho,k}^\delta \le \sup_{\mathbb{M}_{k+1}}E(\vec{u}) < M_1.
\end{align}
\end{proposition}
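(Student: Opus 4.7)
The plan is to assemble the well-definedness of $m_{\vec{c},\rho,k}^\delta$ and the full chain of inequalities in \eqref{eqestiofm} as direct consequences of Lemmas \ref{lem>0}, \ref{lemma 2.2}, and \ref{lemnemp}, glued together by the linking property of Definition \ref{link}.

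First I would handle the upper bound by using an explicit member of $\mathcal{L}_{\rho,k}$. By Lemma \ref{lemnemp}, $\mathbb{M}_{k+1} \in \mathcal{L}_{\rho,k}$, so this family is nonempty and the infimum defining $m_{\vec{c},\rho,k}^\delta$ makes sense. Since $\mathbb{M}_{k+1} \subset \mathbb{S}_{k+1}$, Lemma \ref{lemma 2.2} yields
\[
m_{\vec{c},\rho,k}^\delta \;\le\; \sup_{\vec{u}\in \mathbb{M}_{k+1} \cap S^*(\delta)} E(\vec{u}) \;\le\; \sup_{\vec{u}\in \mathbb{M}_{k+1}} E(\vec{u}) \;\le\; \sup_{\vec{u}\in \mathbb{S}_{k+1}} E(\vec{u}) \;<\; M_1,
\]
which furnishes the two rightmost inequalities in \eqref{eqestiofm} and certifies finiteness.

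For the lower bound, I would invoke the linking property with the admissible test map $h = \mathrm{id}|_A$. Fix any $A \in \mathcal{L}_{\rho,k}$. Since the definition of $\mathcal{L}_{\rho,k}$ forces $\partial\mathbb{M}_{k+1} \subset A$, the restriction of the identity map on $A$ is a continuous map from $A$ into $S_{\vec{c}}$ satisfying $h|_{\partial\mathbb{M}_{k+1}} = \mathrm{id}$, and Definition \ref{link} then forces $A \cap \mathbb{S}_k^\bot \neq \emptyset$. Because $A \subset \B_\rho$, we have $\emptyset \neq A \cap \mathbb{S}_k^\bot \subset \B_\rho \cap \mathbb{S}_k^\bot$. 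Lemma \ref{lem>0} supplies $\mathrm{dist}(\mathbb{S}_k^\bot \cap \B_\rho,\mathcal{P}) = \delta_0$, which together with the hypothesis $\delta < \delta_0$ yields $\B_\rho \cap \mathbb{S}_k^\bot \subset S^*(\delta)$. Hence $A \cap \mathbb{S}_k^\bot \subset A \cap S^*(\delta)$, so the inner supremum is over a nonempty set, and taking the supremum over $A \cap S^*(\delta)$ and then the infimum over $A \in \mathcal{L}_{\rho,k}$ produces $\inf_{\B_\rho \cap \mathbb{S}_k^\bot} E \le m_{\vec{c},\rho,k}^\delta$. The remaining strict inequality $\sup_{\partial\mathbb{M}_{k+1}} E < \inf_{\B_\rho \cap \mathbb{S}_k^\bot} E$ is precisely the last clause of Lemma \ref{lemma 2.2}.

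I do not anticipate a genuine obstacle, as each preparatory lemma has been engineered to drop into this chain. The only steps deserving care are the legitimacy of the test map $h = \mathrm{id}|_A$ in Definition \ref{link} (immediate from $\partial\mathbb{M}_{k+1} \subset A$, which is baked into the definition of $\mathcal{L}_{\rho,k}$) and the strict comparison $\delta < \delta_0$, which is what converts the qualitative separation provided by Lemma \ref{lem>0} into the quantitative inclusion $\B_\rho \cap \mathbb{S}_k^\bot \subset S^*(\delta)$ underlying the lower bound.
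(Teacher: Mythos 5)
Your proposal is correct and follows the same path as the paper: Lemma \ref{lemnemp} gives the upper bound via $\mathbb{M}_{k+1}\in\mathcal{L}_{\rho,k}$, the link definition applied with $h=\mathrm{id}|_A$ plus Lemma \ref{lem>0} gives the lower bound, and Lemma \ref{lemma 2.2} supplies the strict separation on the left. The paper's version of \eqref{eqvaluesepe} actually bounds $m_{\vec{c},\rho,k}^\delta$ from below by $\inf_{\B_\rho^{M_1}\cap\mathbb{S}_k^\bot}E$ (slightly sharper, since any $A\in\mathcal{L}_{\rho,k}$ satisfies $\sup_A E<M_1$ and hence $A\subset\B_\rho^{M_1}$), but your weaker $\inf_{\B_\rho\cap\mathbb{S}_k^\bot}E$ is exactly what the stated inequality requires, and you spell out more explicitly the two small steps the paper compresses: the choice of $h=\mathrm{id}|_A$ to extract $A\cap\mathbb{S}_k^\bot\neq\emptyset$, and the inclusion $\B_\rho\cap\mathbb{S}_k^\bot\subset S^*(\delta)$ from $\delta<\delta_0$.
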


\begin{proof}
By Lemma \ref{lemnemp}, we know $\mathcal{L}_{\rho,k}$ is not empty. Moreover, by the choice of $\delta$, we have $A \cap S^*(\delta) \neq \emptyset$ for any $A \in \mathcal{L}_{\rho,k}$. Thus $m_{\vec{c},\rho,k}^\delta$ is well defined. Additionally, on the one hand, according to the definition of linking and using Lemma \ref{lemma 2.2}, we obtain
\begin{align} \label{eqvaluesepe}
	m_{\vec{c},\rho,k}^\delta \geq \inf_{  	\B_\rho   ^{M_1}\cap\mathbb{S}_{k}^\bot}E(\vec{u}) > \sup_{\partial \mathbb{M}_{k+1}}E(\vec{u}) > -\infty.
\end{align}
On the other hand, by $\mathbb{M}_{k+1} \in \mathcal{L}_{\rho,k}$ it follows that
\begin{align*}
	m_{\vec{c},\rho,k}^\delta \leq \sup_{\mathbb{M}_{k+1}}E(\vec{u}) < M_1.
\end{align*}
Therefore, the value $m_{\vec{c},\rho,k}^\delta$ is finite.
\end{proof}

\subsection{The minimax level for semi-nodal normalized solutions}\label{subsection 2.3}
Let   $d$  be an integer such that $1 \le d \le m-1$. We are concerned with the existence of semi-nodal normalized solutions with the property that the first $d$ components $u_1,\ldots,u_d$ change sign,  while the remaining components $ u_{d+1},\ldots, u_m$ are positive.

\medbreak
To deal with  the sign-changing components $\sbr{u_1,\ldots,u_d}$, we use the notations in Subsection \ref{subsection 2.2} and define the following product spaces:
$$ \HR_{k,d}:= \underbrace{H_{k }\times H_{k }\times \cdots \times H_{k }}_{d} , \quad \text{ and } \quad  \HR_{k,d}^\bot:= \underbrace{H_{k }^\bot\times H_{k }^\bot\times \cdots \times H_{k }^\bot}_{d} . $$
Moreover, we consider
\begin{equation}
S_{\vec{c},d}: = \left\{ (u_{1},\cdots, u_d) \in H_0^1(\Omega,\R^{d}): ~\int_\Omega u_i^2dx = c_i, ~ i  \in  \lbr{1,\cdots,d} \right\}.
\end{equation}
and introduce the subspace
\begin{equation}
\mathbb{S}_{ k,d}  :=  S _{\vec{c},d} \cap \HR_ {k,d} , \quad \text{ and } \quad  \mathbb{S}_{ k,d}^\bot  :=    S _{\vec{c},d} \cap \HR_ {k,d}^\bot. \end{equation}
Then we have $\mathbb{S}_{ k,d} \cong \mathbb{S}_k^{1 } \times \mathbb{S}_k^{2} \times \cdots \times \mathbb{S}_k^{d}$, where  $\mathbb{S}_k^{i }$ is  defined in \eqref{defi of Sk}. Let
\begin{equation}
\mathbb{M}_{k+1,d}  := \mathbb{M}_{k+1 }^1 \times\mathbb{M}_{k+1 }^2 \times \cdots \times \mathbb{M}_{k+1 }^{d},
\end{equation}
where  $\mathbb{M}_k^{i } $  is defined in  \eqref{defi of Mk}. Obviously,  there holds  $ \mathbb{S}_{ k,d} \subset \mathbb{M}_{k+1,d}$.
\medbreak
The abstract definition \ref{def:partial link} of partial vector linking  in the  specific case where $\mathcal{M}_i = S_{c_i}$,  $\mathcal{M}^{(m)}= S_{\vec{c}}$, $D = \partial \mathbb{M}_{k+1,d}$ and $S = \mathbb{S}_{k,d}^\bot$ is stated as follows.

\begin{definition}[Partial vector linking] \label{link semi-nodal}
{\rm Let
\begin{equation}
	T_d:H_0^1(\Omega,\R^m) \to H_0^1(\Omega,\R^d) ,  \quad \vec{u} = (u_1,\cdots,u_m) \mapsto (u_1,\cdots,u_d)
\end{equation}
be the projection map.  Then a compact subset $A$ of $S_{\vec{c}}$ with $\partial \mathbb{M}_{k+1,d} \times \lbr{(\sqrt{c_{d+1}}\phi_1, \cdots, \sqrt{c_{m}}\phi_1)} \subset A$ is said to be partially $d$-linked to $\mathbb{S}_{k,d}^\bot $, if, for any continuous mapping $h \in C(A, {S_{\vec{c}}})$ satisfying
\[h|_{\partial \mathbb{M}_{k+1,d} \times \lbr{(\sqrt{c_{d+1}}\phi_1, \cdots, \sqrt{c_{m}}\phi_1)}} = \textbf{id},\]
there holds $$T_d\circ h(A) \cap\mathbb{S}_{k,d}^\bot  \not= \emptyset.$$}
\end{definition}
\medbreak
In what follows, we focus on the construction of minimax levels at which  semi-nodal  normalized solutions can be found.  Let
\begin{equation}
\mathcal{P}_i:=\lbr{\vec{u} = (u_1,u_2,\cdots,u_m)\in \HR: u_i\geq 0}; \quad \mathcal{P}^{(d)} :=\bigcup_{i=1}^d(\mathcal{P}_i \cup -\PR_i).
\end{equation}
Then the first $d$ components of elements outside $\mathcal{P}^{(d)}$ are sign-changing.  For $\delta>0$, we define $\PR^{(d)}_\delta:= \bigl\{\vec{u} \in \HR: \text{dist}(\vec{u}, \PR^{(d)}) <\delta\}$, where
\begin{equation}
\text{dist}(\vec{u}, \PR^{(d)}):=\min \lbr{\text{dist}(\vec{u}, \PR_i), ~\text{dist}(\vec{u},-\PR_i): i = 1,\ldots,d}.
\end{equation}
Note that
\begin{align*}
\PR^{(d)}_\delta = \bigcup_{i = 1}^d \sbr{(\PR_i)_\delta \cup (-\PR_i)_\delta}, \quad (\pm\PR_i)_\delta = \lbr{ \vec{u} \in \HR: \text{dist}(\vec{u}, \pm\PR_i) <\delta }.
\end{align*}
Now we introduce the set $S_d^*(\delta)$ by
\begin{align}
S_d^*(\delta) := \mathcal{H} \backslash \overline{\PR^{(d)}_\delta},
\end{align}
and define
\begin{equation}
S_{\vec{c},m-d}: = \left\{ (u_{d+1},\cdots, u_m) \in H_0^1(\Omega,\R^{m-d}): ~\int_\Omega u_i^2dx = c_i, ~ i  \in  \lbr{d+1,\cdots,m}\right\}.
\end{equation}
Recall the definition of $\B_\rho$ in \eqref{defBrho}, the following lemma repeats the result in Lemma \ref{lem>0} in a case replacing $\p$ with $\p^{(d)}$. For convenience, we provide details of the proof.
\begin{lemma} \label{lem>0 semi-nodal}
For any $\rho > 0$ satisfying \eqref{defi of rho},  there is a constant $\delta_{0,d} > 0$ such that
\begin{align*}
	{\rm dist} \sbr{\B_\rho \cap \sbr{\mathbb{S}_{k,d}^\bot \times S_{\vec{c},m-d }},  \mathcal{P}^{(d)}}  = \delta_{0,d} > 0.
\end{align*}
\end{lemma}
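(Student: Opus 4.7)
The plan is to argue by contradiction, mirroring the structure of the proof of Lemma \ref{lem>0} but adapting to the weaker requirement that only the first $d$ components of the limit need to be sign-changing. Suppose for the sake of contradiction that the distance vanishes. Then one can select sequences $\vec{u}_n \in \B_\rho \cap (\mathbb{S}_{k,d}^\bot \times S_{\vec{c},m-d})$ and $\vec{p}_n \in \PR^{(d)}$ with $\nm{\vec{u}_n - \vec{p}_n} \to 0$, which in particular gives $\nm{\vec{u}_n - \vec{p}_n}_{L^2(\Omega,\R^m)} \to 0$. Because $\B_\rho$ is bounded in $\HR$, I can extract a subsequence satisfying $\vec{u}_n \rightharpoonup \vec{u}^*$ in $\HR$ and $\vec{u}_n \to \vec{u}^*$ strongly in $L^2(\Omega,\R^m)$.

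Next I would verify that the first $d$ components of the limit $\vec{u}^*$ are sign-changing. Indeed, $\HR_{k,d}$ is finite-dimensional, so weak convergence in $H_0^1$ preserves membership of $(u_{n,1},\ldots,u_{n,d})$ in the orthogonal complement $\HR_{k,d}^\bot$; meanwhile strong $L^2$ convergence preserves each mass constraint $\int_\Omega u_i^2 \, dx = c_i$ for $i = 1,\ldots,d$. Hence $T_d(\vec{u}^*) \in \mathbb{S}_{k,d}^\bot$, and by the remark recorded in Subsection \ref{subsection 2.2} that every fully nontrivial element of $\HR_{k,d}^\bot$ has sign-changing components, each of $u_1^*,\ldots,u_d^*$ must change sign.

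Finally I would extract the contradiction from the other sequence. By definition of $\PR^{(d)}$, for each $n$ there exist an index $i_n \in \{1,\ldots,d\}$ and a sign $\sigma_n \in \{+1,-1\}$ with $\sigma_n p_{n,i_n} \geq 0$ a.e. on $\Omega$. Passing to a further subsequence I may assume $i_n \equiv i$ and $\sigma_n \equiv \sigma$ are constant. The triangle inequality
\[
\nm{u_i^* - \sigma p_{n,i}}_{L^2(\Omega)} \leq \nm{u_i^* - u_{n,i}}_{L^2(\Omega)} + \nm{u_{n,i} - p_{n,i}}_{L^2(\Omega)} \to 0
\]
shows that $\sigma u_i^*$ is the $L^2$-limit of nonnegative functions, hence $\sigma u_i^* \geq 0$ almost everywhere, contradicting the fact that $u_i^*$ changes sign. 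The only delicate step is ensuring that the constraint set $\mathbb{S}_{k,d}^\bot$ is sequentially closed under the combined weak-$H_0^1$/strong-$L^2$ topology, which I expect to be straightforward given the finite-dimensionality of $\HR_{k,d}$ and the continuity of the mass functional in $L^2$; everything else is a direct adaptation of the original argument.
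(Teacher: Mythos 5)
Your argument is correct and follows the same contradiction-by-weak-compactness route as the paper's own proof; the only departure is that you make explicit the subsequence extraction fixing the index $i$ and sign $\sigma$, which the paper leaves implicit, a minor but welcome tightening of the same argument.
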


\begin{proof}
Assume by contradiction that
$$
\text{dist} \sbr{\B_\rho \cap \sbr{\mathbb{S}_{k,d}^\bot \times S_{\vec{c},m-d }},  \mathcal{P}^{(d)}} =0.
$$
Then there exists a sequence  $\{\vec{u}_n\} \subset\B_\rho   \cap \sbr{\mathbb{S}_{k,d}^\bot \times S_{\vec{c},m-d }}$ and  $\{\vec{p}_n\} \subset  \mathcal{P}^{(d)}$ such that $\|\vec{u}_n - \vec{p}_n\| \to 0$, and so $\|\vec{u}_n - \vec{p}_n\|_{L^2(\Omega,\R^m)} \to 0$. By $\{\vec{u}_n\} \subset \B_\rho$ it follows that $\{\vec{u}_n\}$ is bounded in $\HR$. Up to a subsequence, we  may assume that $\vec{u}_n \rightharpoonup \vec{u}^* \in \HR$ weakly in  $\HR$; moreover, we have  $\vec{u}_n \to \vec{u}^*$ strongly in $ L^2(\Omega,\R^m)$. Then we observe that $\vec{u}^* \in \mathbb{S}_{k,d}^\bot \times S_{\vec{c},m-d }$, which implies that the first $d$ components of   $\vec{u}^*$ are sign-changing. On the other hand, we have
\begin{align*}
	\|\vec{u}^* - \vec{p}_n\|_{L^2(\Omega,\R^m)} \le \|\vec{u}^* - \vec{u}_n\|_{L^2(\Omega,\R^m)} + \|\vec{u}_n - \vec{p}_n\|_{L^2(\Omega,\R^m)} \to 0,
\end{align*}
and it follows that $\|u^*_i - p_{n,i}\|_{L^2(\Omega)} \to 0$ for $i  =1,\cdots,d$. From $\vec{p}_n \in \p^{(d)}$ it follows that $p_{n,j}$ is nonnegative or nonpositive for some $j \in \{1,\cdots,d\}$. We have reached a contradiction, since $u^*_j$ is sign-changing. The proof is complete.
\end{proof}




Next, we consider
\begin{align*}
\mathcal{L}_{\rho,k,d}:= \lbr{A \subset \sbr{\bigcap_{i = d+1}^m\p_i} \cap \B_\rho: \sup_{\vec{u} \in A}  E(\vec{u}) < M_1, ~A \text{ is partially $d$-linked to } \mathbb{S}_{k,d}^\bot },
\end{align*}
where $M_1=M_1(\rho)$ is the constant given in Lemma \ref{lemma 2.1}, and introduce the minimax level 
\begin{align}
m_{\vec{c},\rho,k,d}^\delta := \inf_{A \in \mathcal{L}_{\rho,k,d}}\sup_{\vec{u} \in A \cap S_{d}^*(\delta)}E(\vec{u}).
\end{align}

	\begin{lemma} \label{lemma 5.2}
		Let $k$ be a  positive integer such that  $\Lambda_k<\Lambda_{k+1}$. For any $\rho>0$ and  $\vec{c} = (c_1,\cdots,c_m) \in \sbr{\R^+}^m$ satisfying \eqref{defi of rho},
		\begin{align} \label{eqci5.1}
			\frac{1}{4}\mbr{2- \sbr{  	\mu_{\min}^- + \beta_{\min}^-} \mathcal{C}_N^{-4} \rho }\Lambda_{k+1}  \sum_{i= 1}^m  c_i   <M_1(\rho) ,
		\end{align}	
		and
		\begin{equation}\label{eqci5.2}
			\begin{aligned}
				&{\sbr{\beta_{\max}^+ -	\mu_{\min}^-  -  \beta_{\min}^-} \mathcal{C}_N^{-4} \rho }\sbr{\Lambda_{k+1} \sum_{i=1}^d c_i + \Lambda_1\sum_{i=d+1}^m c_i}+   \mu_{\max}^+ \mathcal{C}_N^{-4}\sbr{\Lambda_{k+1}^2 \sum_{i=1}^d c_i^2 + \Lambda_1^2\sum_{i=d+1}^m c_i^2}
				\\
				&<\mbr{2- \sbr{ 	\mu_{\min}^-  +  \beta_{\min}^-} \mathcal{C}_N^{-4} \rho }(\Lambda_{k +1} -\Lambda_{k})\min_{1 \le i \le d} c_i,
			\end{aligned}		
		\end{equation}
		we have	
		\begin{equation} \label{s10}
			\mathbb{S}_{ k+1,d}  \times \lbr{(\sqrt{c_{d+1}}\phi_1, \cdots, \sqrt{c_{m}}\phi_1)} \subset 	\B_\rho , \quad \sup\limits_{\mathbb{S}_{ k+1,d} \times \lbr{(\sqrt{c_{d+1}}\phi_1, \cdots, \sqrt{c_{m}}\phi_1)}}E(\vec{u}) < M_1,
		\end{equation}
		and
		\begin{equation}\label{s11}
			\displaystyle \sup_{\partial \mathbb{M}_{k+1,d} \times \lbr{(\sqrt{c_{d+1}}\phi_1, \cdots, \sqrt{c_{m}}\phi_1)}}E(\vec u) <  \inf_{ \B_\rho \cap\sbr{\mathbb{S}_{k,d}^\bot \times S_{\vec{c},m-d}}}E(\vec u) .
		\end{equation}
	\end{lemma}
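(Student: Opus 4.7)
The plan is to repeat the argument of Lemma \ref{lemma 2.2} almost verbatim, with the only modification being to split the index set into the first $d$ components, which range over $\mathbb{S}_{k+1,d}$ or $\partial\mathbb{M}_{k+1,d}$ and live in $H_{k+1}^d$, and the last $m-d$ components, which are frozen at $\sqrt{c_i}\phi_1$ and contribute a Dirichlet energy of exactly $c_i\Lambda_1$.

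First I would verify the two claims in \eqref{s10}. For $i\leq d$, writing $u_i=\sum_{j=1}^{k+1}t_{i,j}\sqrt{c_i}\phi_j$ gives $\int_\Omega|\nabla u_i|^2\,dx\leq c_i\Lambda_{k+1}$ just as in \eqref{s1}, while for $i>d$ one has $\int_\Omega|\nabla u_i|^2\,dx=c_i\Lambda_1$. Summing over $i$ yields $\|\vec u\|^2\leq\Lambda_{k+1}\sum_{i=1}^d c_i+\Lambda_1\sum_{i=d+1}^m c_i\leq\Lambda_{k+1}\sum_{i=1}^m c_i$, and since $\mu_{\min}^-+\beta_{\min}^-\leq 0$, condition \eqref{eqci5.1} forces $\tfrac12\Lambda_{k+1}\sum_{i=1}^m c_i<M_1<\tfrac{\rho}{2}$, hence $\|\vec u\|^2<\rho$. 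The energy bound $E(\vec u)<M_1$ then follows by feeding this into the same chain of inequalities used to derive \eqref{e7} (which only needs the negative constants $\mu_{\min}^-,\beta_{\min}^-$) and invoking \eqref{eqci5.1} once more.

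Next I would establish the value-separation \eqref{s11}. On the supremum side, if $\vec u\in\partial\mathbb{M}_{k+1,d}\times\lbr{(\sqrt{c_{d+1}}\phi_1,\ldots,\sqrt{c_m}\phi_1)}$ then some index $s\in\lbr{1,\ldots,d}$ has $t_{s,k+1}=0$, so $\int_\Omega|\nabla u_s|^2\,dx\leq c_s\Lambda_k$; combining with the bounds $c_i\Lambda_{k+1}$ for the remaining $i\leq d$ and $c_i\Lambda_1$ for $i>d$ produces
\[
\sum_{i=1}^m\int_\Omega|\nabla u_i|^2\,dx\leq \Lambda_{k+1}\sum_{i=1}^d c_i+\Lambda_1\sum_{i=d+1}^m c_i-(\Lambda_{k+1}-\Lambda_k)\min_{1\leq i\leq d} c_i,
\]
and the Lemma \ref{lemma 2.1}-style upper envelope $E(\vec u)\leq\tfrac14[2-(\mu_{\min}^-+\beta_{\min}^-)\mathcal{C}_N^{-4}\rho]\|\vec u\|^2$ yields the desired upper bound. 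On the infimum side, for $\vec u\in\B_\rho\cap(\mathbb{S}_{k,d}^\bot\times S_{\vec c,m-d})$ one has $c_i\Lambda_{k+1}\leq\int_\Omega|\nabla u_i|^2\,dx<\rho$ for $i\leq d$ and $c_i\Lambda_1\leq\int_\Omega|\nabla u_i|^2\,dx<\rho$ for $i>d$; applying componentwise the monotonicity step behind \eqref{e9} (valid on $[0,\rho]$ thanks to \eqref{defi of rho}) gives
\[
E(\vec u)\geq\tfrac14\sum_{i=1}^d\bigl[(2-\beta_{\max}^+\mathcal{C}_N^{-4}\rho)c_i\Lambda_{k+1}-\mu_{\max}^+\mathcal{C}_N^{-4}c_i^2\Lambda_{k+1}^2\bigr]+\tfrac14\sum_{i=d+1}^m\bigl[(2-\beta_{\max}^+\mathcal{C}_N^{-4}\rho)c_i\Lambda_1-\mu_{\max}^+\mathcal{C}_N^{-4}c_i^2\Lambda_1^2\bigr].
\]

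Finally I would subtract the sup bound from the inf bound: the coefficient-$2$ terms pair into $-(\beta_{\max}^+-\mu_{\min}^--\beta_{\min}^-)\mathcal{C}_N^{-4}\rho\cdot[\Lambda_{k+1}\sum_{i=1}^d c_i+\Lambda_1\sum_{i=d+1}^m c_i]$, the quadratic losses contribute $-\mu_{\max}^+\mathcal{C}_N^{-4}[\Lambda_{k+1}^2\sum_{i=1}^d c_i^2+\Lambda_1^2\sum_{i=d+1}^m c_i^2]$, and there remains the gain $+[2-(\mu_{\min}^-+\beta_{\min}^-)\mathcal{C}_N^{-4}\rho](\Lambda_{k+1}-\Lambda_k)\min_{1\leq i\leq d}c_i$; positivity of this sum is exactly the content of \eqref{eqci5.2}. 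The only delicate point is bookkeeping the two index ranges $i\leq d$ and $i>d$ correctly and making sure the sign conventions $\mu_{\min}^-,\beta_{\min}^-\leq 0$ and $\mu_{\max}^+,\beta_{\max}^+\geq 0$ keep the estimates pointing in the right direction; no conceptually new ingredient beyond Lemma \ref{lemma 2.2} is needed.
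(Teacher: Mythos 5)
Your proposal follows essentially the same route as the paper's proof: you split the $m$ indices into the first $d$ components (lying in $\mathbb{S}_{k+1,d}$ or $\partial\mathbb{M}_{k+1,d}$, with gradient bound $c_i\Lambda_{k+1}$ or $c_i\Lambda_k$ as appropriate) and the last $m-d$ components frozen at $\sqrt{c_i}\phi_1$ (contributing exactly $c_i\Lambda_1$), then run the exact chain of inequalities from Lemma \ref{lemma 2.2} to get the upper envelope for $E$ on the boundary set and the componentwise lower bound on $\B_\rho\cap(\mathbb{S}_{k,d}^\bot\times S_{\vec c,m-d})$, and finally recognize that the separation is precisely condition \eqref{eqci5.2}. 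This is the same argument the paper gives, including the observation that $\mu_{\min}^-+\beta_{\min}^-\le 0$ together with \eqref{eqci5.1} forces $\Lambda_{k+1}\sum_i c_i<\rho$.
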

	
	\begin{proof}
		For any $\vec{u}\in \mathbb{S}_{ k+1,d} \times \lbr{(\sqrt{c_{d+1}}\phi_1, \cdots, \sqrt{c_{m}}\phi_1)}$, similar to \eqref{s1}, we have
		\begin{equation}
			\int_\Omega |\nabla u_i|^2 dx \leq  c_i  \Lambda_{k+1}, \quad \text{ for } ~ i \in \lbr{1,\ldots,d},
		\end{equation}
		which, together with $ \Lambda_{1}< \Lambda_{k+1}$, gives
		\begin{equation}\label{ss1}
			\begin{aligned}
				\nm{ \vec{u}}^2 & \leq \sum_{i= 1}^d \int_\Omega |\nabla u_i|^2 dx  +  \sum_{i=d+1}^m c_i	\int_\Omega |\nabla \phi_i|^2 dx \\
				&  \leq \Lambda_{k+1}\sum_{i= 1}^d   c_i    +  \Lambda_{1} \sum_{i=d+1}^m   c_i  \\
				&< \Lambda_{k+1}\sum_{i= 1}^m   c_i  <\rho,
			\end{aligned}	
		\end{equation}
		where the last inequality follows from \eqref{defi of M1}  and \eqref{eqci1}. Hence, 	  the first relation in \eqref{s10} holds.
		\medbreak
		For any $\vec{u}\in 	\mathbb{S}_{ k+1,d}   \times \lbr{(\sqrt{c_{d+1}}\phi_1, \cdots, \sqrt{c_{m}}\phi_1)} \subset 	\B_\rho$, similar to \eqref{e7}, we  deduce from \eqref{ss1}   that
		\begin{equation} \label{Eveculeq}
			\begin{aligned}
				E(\vec{u})
				&\leq \frac{1}{4}\mbr{2- \sbr{  	\mu_{\min}^-  +  \beta_{\min}^-} \mathcal{C}_N^{-4} \rho }\sum_{i=1}^m\int_\Omega |\nabla u_i|^2dx \\
				&< \frac{1}{4}\mbr{2- \sbr{ 	\mu_{\min}^-   +  \beta_{\min}^-} \mathcal{C}_N^{-4} \rho }   \Lambda_{k+1}\sum_{i= 1}^m  c_i  ,
			\end{aligned}
		\end{equation}
		from which, together with \eqref{eqci1}, we get $\sup\limits_{\mathbb{S}_{ k+1,d} \times \lbr{(\sqrt{c_{d+1}}\phi_1, \cdots, \sqrt{c_{m}}\phi_1)}}E(\vec{u}) < M_1$.

		%
		
		\medbreak
		Finally,  for any  {$\vec{u} \in	\B_\rho \cap \sbr{\mathbb{S}_{k,d}^\bot \times S_{\vec{c},m-d}} \subset  \B_\rho $},    there holds
		\begin{equation} \label{e21}
			c_i \Lambda_{k+1} \leq 	\int_\Omega |\nabla u_i|^2dx \leq  \rho  \quad \forall i\in \lbr{1,\ldots,d},
		\end{equation}
		and
		\begin{equation} \label{e22}
			c_i \Lambda_{1} \leq 	\int_\Omega |\nabla u_i|^2dx \leq  \rho  \quad \forall i\in \lbr{d+1,\ldots,m}.
		\end{equation}
		Then similar to  \eqref{e9}, we have		
		\begin{equation}
			\begin{aligned} \label{qw1}
				\inf_{   \B_\rho \cap\sbr{\mathbb{S}_{k,d}^\bot \times S_{\vec{c},m-d}}}	E(\vec{u})
				& \geq \frac 14 \sum_{i=1}^d \mbr{  \sbr{2- \beta_{\max}^+\mathcal{C}_N^{-4}\rho}c_i\Lambda_{k+1}  - \mu_{\max}^+ \mathcal{C}_N^{-4}  \sbr{c_i\Lambda_{k+1} }^2}\\
				&+ \frac 14 \sum_{i=d+1}^m \mbr{  \sbr{2-\beta_{\max}^+\mathcal{C}_N^{-4}\rho}c_i\Lambda_{1}  - \mu_{\max}^+ \mathcal{C}_N^{-4}  \sbr{c_i\Lambda_{1} }^2}.
			\end{aligned}
		\end{equation}
		Observe that $$\partial \mathbb{M}_{k+1,d} =\bigcup_{s=1}^d \sbr{  \mathbb{M}_{k+1}^1\times \cdots\times \partial \mathbb{M}_{k+1}^s \times \cdots\times  \mathbb{M}_{k+1}^d}.$$ Then for any $s\in \lbr{1,\ldots,d}$ and   $\vec{u}_d=\sbr{u_1,\ldots,u_d} \in  \mathbb{M}_{k+1}^1\times \cdots\times \partial \mathbb{M}_{k+1}^s \times \cdots\times  \mathbb{M}_{k+1}^d$, we have
		\begin{equation}
			\begin{aligned}
				\int_\Omega |\nabla u_s|^2dx= \sum_{j=1}^{k } t_{s,j}^2 c_s \int_\Omega|\nabla \phi_j|^2 dx = \sum_{j=1}^{k } t_{s,j}^2 c_s  \Lambda_j \leq c_s  \Lambda_{k},
			\end{aligned}
		\end{equation}
		and
		\begin{equation}
			\int_\Omega |\nabla u_i|^2dx= \sum_{j=1}^{k+1} t_{i,j}^2 c_i \int_\Omega|\nabla \phi_j|^2 dx = \sum_{j=1}^{k+1 } t_{i,j}^2 c_i  \Lambda_j \leq c_i  \Lambda_{k+1}, \quad \text{ for }~~i\neq s.
		\end{equation}
		Hence,
		\begin{equation}
			\begin{aligned}
				&\sup_{\partial \mathbb{M}_{k+1,d}  \times \lbr{(\sqrt{c_{d+1}}\phi_1, \cdots, \sqrt{c_{m}}\phi_1)}}\sum_{i=1}^m\int_\Omega |\nabla u_i|^2dx
				\\& \leq {\Lambda_{k+1} { \sum_{i=1}^d c_i}  - (\Lambda_{k +1} -\Lambda_{k})\min_{1 \le i \le d} c_i}+ \Lambda_{1} { \sum_{i=d+1}^m c_i} .
			\end{aligned}
		\end{equation}
		Therefore, similar to \eqref{e7}, we have
		\begin{equation}  \label{qw2}
			\begin{aligned}
				&\sup_{\partial \mathbb{M}_{k+1,d} \times \lbr{(\sqrt{c_{d+1}}\phi_1, \cdots, \sqrt{c_{m}}\phi_1)}}    E(\vec{u}) \\ &\leq \frac{1}{4}\mbr{2- \sbr{ 	\mu_{\min}^- +  \beta_{\min}^-} \mathcal{C}_N^{-4} \rho }\sup_{\partial \mathbb{M}_{k+1,d} \times \lbr{(\sqrt{c_{d+1}}\phi_1, \cdots, \sqrt{c_{m}}\phi_1)}}\sum_{i=1}^m\int_\Omega |\nabla u_i|^2dx \\
				&\leq \frac{1}{4}\mbr{2- \sbr{  \mu_{\min}^- +  \beta_{\min}^-} \mathcal{C}_N^{-4} \rho }\sbr{\Lambda_{k+1}  { \sum_{i=1}^d c_i} +\Lambda_{1} { \sum_{i=d+1}^m c_i} - (\Lambda_{k +1} -\Lambda_{k})\min_{1 \le i \le d} c_i}.
			\end{aligned}
		\end{equation}
		Hence, from \eqref{eqci5.2}, \eqref{qw1}, and \eqref{qw2} we obtain  that \eqref{s11} holds.  The proof is completed.
	\end{proof}	
	
	\begin{lemma} \label{lemnemp semi-nodal}
		For any $\rho>0$  and $\vec{c} = (c_1,\cdots,c_m) \in \sbr{\R^+}^m $ satisfying \eqref{defi of rho},  \eqref{eqci5.1}, \eqref{eqci5.2}, we have  $\mathbb{M}_{k+1,d}\times \lbr{(\sqrt{c_{d+1}}\phi_1, \cdots, \sqrt{c_{m}}\phi_1)}  \in \mathcal{L}_{\rho,k,d}$, and so, $\mathcal{L}_{\rho,k,d} \neq \emptyset$.
	\end{lemma}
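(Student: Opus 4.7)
My strategy is to mirror the proof of Lemma \ref{lemnemp}, adapted to the partial vector link setting and to the extra ``frozen'' positive last $m-d$ components. Three properties of $A := \mathbb{M}_{k+1,d}\times \lbr{(\sqrt{c_{d+1}}\phi_1, \ldots, \sqrt{c_{m}}\phi_1)}$ must be checked to conclude $A \in \mathcal{L}_{\rho,k,d}$. Since the last $m-d$ components of every element of $A$ equal $\sqrt{c_i}\phi_1 > 0$, the membership $A \subset \bigcap_{i=d+1}^m \p_i$ is automatic; and the inclusion $A \subset \B_\rho$ together with the energy bound $\sup_{A} E < M_1$ follow directly from \eqref{s10} in Lemma \ref{lemma 5.2}. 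The remaining property, partial $d$-linking to $\mathbb{S}_{k,d}^\bot$, is the main point.

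To establish partial $d$-linking, set $\vec{v} := (\sqrt{c_{d+1}}\phi_1, \ldots, \sqrt{c_m}\phi_1)$ and fix $h \in C(A, S_{\vec{c}})$ with $h|_{\partial \mathbb{M}_{k+1,d}\times \lbr{\vec{v}}} = \textbf{id}$. I will produce $\vec{u}_0 \in \mathbb{M}_{k+1,d}$ with $T_d h(\vec{u}_0,\vec{v}) \in \mathbb{S}_{k,d}^\bot$ via a Brouwer degree argument. Let $P_k^{(d)}: H_0^1(\Omega,\R^d) \to \HR_{k,d}$ denote the componentwise orthogonal projection onto the first $k$ eigenspaces. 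Under the homeomorphism $\mathbb{M}_{k+1}^i \cong \mathbb{P}_k^i$ introduced in Lemma \ref{lemnemp}, $\mathbb{M}_{k+1,d}$ becomes the $kd$-dimensional product of closed balls $\mathbb{P}_k^1 \times \cdots \times \mathbb{P}_k^d$, so Brouwer degree theory is available for continuous maps into the $kd$-dimensional target $\HR_{k,d}$. Consider the homotopy
\begin{equation*}
g(t,\vec{u}_d) := (1-t)\,P_k^{(d)} T_d h(\vec{u}_d,\vec{v}) + t\,P_k^{(d)}\vec{u}_d, \quad (t,\vec{u}_d) \in [0,1]\times \mathbb{M}_{k+1,d}.
\end{equation*}
On $\partial \mathbb{M}_{k+1,d}$ the identity hypothesis on $h$ gives $T_d h(\vec{u}_d,\vec{v}) = \vec{u}_d$, so $g(t,\vec{u}_d) \equiv P_k^{(d)}\vec{u}_d$ there, and I will argue below that this never vanishes. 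Granting that, homotopy invariance yields
\begin{equation*}
\deg\sbr{P_k^{(d)} T_d h(\cdot,\vec{v}),\,\mathbb{M}_{k+1,d},\,\vec{0}} = \deg\sbr{P_k^{(d)}|_{\mathbb{M}_{k+1,d}},\,\mathbb{M}_{k+1,d},\,\vec{0}} = 1,
\end{equation*}
the last equality following from the same product-of-balls identification exploited in Lemma \ref{lemnemp}. Some $\vec{u}_0 \in \mathbb{M}_{k+1,d}$ then satisfies $P_k^{(d)} T_d h(\vec{u}_0,\vec{v}) = \vec{0}$, i.e.\ $T_d h(\vec{u}_0,\vec{v}) \in \HR_{k,d}^\bot \cap S_{\vec{c},d} = \mathbb{S}_{k,d}^\bot$, which is the required partial $d$-link.

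The main obstacle is the nonvanishing of $P_k^{(d)}\vec{u}_d$ along $\partial \mathbb{M}_{k+1,d}$. The face-by-face decomposition
\[
\partial \mathbb{M}_{k+1,d} = \bigcup_{s=1}^d \sbr{\mathbb{M}_{k+1}^1 \times \cdots \times \partial \mathbb{M}_{k+1}^s \times \cdots \times \mathbb{M}_{k+1}^d}
\]
is decisive here: on the $s$-th face the coordinate $u_s$ belongs to $\partial \mathbb{M}_{k+1}^s = \mathbb{S}_k^s$, so its $\phi_{k+1}$-coefficient vanishes and $P_k u_s = u_s$ has unit $L^2$-mass, in particular is nonzero. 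Consequently $P_k^{(d)}\vec{u}_d \neq \vec{0}$ throughout $\partial \mathbb{M}_{k+1,d}$, which legitimizes the degree computation above and completes the verification.
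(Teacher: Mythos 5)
Your proposal is correct and takes essentially the same route as the paper: identify $\mathbb{M}_{k+1,d}$ with the product of balls $\mathbb{P}_k^1 \times \cdots \times \mathbb{P}_k^d$, linearly homotope $P_k \circ \tilde h$ (your $P_k^{(d)} T_d h(\cdot,\vec{v})$) to the projection $P_k$, and observe that $P_k\vec{u}_d \neq \vec 0$ on $\partial\mathbb{M}_{k+1,d}$, so the degree is $1$ and a zero exists. Your explicit face-by-face justification of the boundary nonvanishing is a useful elaboration of a point the paper states without detail, but the argument is otherwise the same.
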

	
	\begin{proof}
		Since $\mathbb{M}_{k+1,d}\times \lbr{(\sqrt{c_{d+1}}\phi_1, \cdots, \sqrt{c_{m}}\phi_1)} \in (\cap_{i = d+1}^m \p_i)$, then by Lemma \ref{lemma 5.2}, it belongs to the set $(\cap_{i = d+1}^m \p_i) \cap \B_\rho$ and we have $$\sup\limits_{\vec{u} \in	\mathbb{M}_{k+1,d}\times \lbr{(\sqrt{c_{d+1}}\phi_1, \cdots, \sqrt{c_{m}}\phi_1)}}E(\vec{u}) < M_1.$$ Note that $\mathbb{M}_{k+1,d} \cap \mathbb{S}_{k,d}^\bot \neq \emptyset$, which consists of one point denoted by $\sbr{\sqrt{c_1}\phi_{k+1},\ldots,\sqrt{c_d}\phi_{k+1}}$.
		For any $h \in C( \mathbb{M}_{k+1,d}\times \lbr{(\sqrt{c_{d+1}}\phi_1, \cdots, \sqrt{c_{m}}\phi_1)}  ,S_{\vec{c}})$  satisfying $h|_{\partial \mathbb{M}_{k+1,d}\times \lbr{(\sqrt{c_{d+1}}\phi_1, \cdots, \sqrt{c_{m}}\phi_1)}} = \textbf{id}$, we claim that
		\begin{equation}\label{sq3}
			T_d \circ h(\mathbb{M}_{k+1,d} \times \lbr{(\sqrt{c_{d+1}}\phi_1, \cdots, \sqrt{c_{m}}\phi_1)}) \cap \mathbb{S}_{k,d}^\bot \not= \emptyset.
		\end{equation}  In fact, observe that  $\mathbb{M}_{k+1}^i \cong \mathbb{P}_k^i  \subset H_k$, where
		\begin{equation}
			\mathbb{P}_k^i:=\lbr{ u_i = \sum_{j=1}^{k}t_{i,j}\sqrt{c_i}\phi_j: \sum_{j=1}^{k}t_{i,j}^2 \leq 1}, \quad i \in \lbr{1,\ldots, d} .
		\end{equation} Hence, $\mathbb{M}_{k+1,d} \cong \mathbb{P}_k^1\times \cdots \times  \mathbb{P}_{k}^d $, and the topology degree $\deg(P_k \circ \tilde h,\mathbb{M}_{k+1,d} ,\vec{0})$ and $\deg(P_k|_{\mathbb{M}_{k+1,d} },\mathbb{M}_{k+1,d} ,\vec{0})$ are well-defined,  where $P_k :  H_0^1(\Omega,\R^d)\to \HR_{k,d}$ is the orthogonal projection, and  $\tilde{h}:  \mathbb{M}_{k+1,d} \to S_{\vec{c},d}$ is defined by
		\begin{equation}
			\tilde{h}(\vec{u}_d) =T_d \circ h(\vec{u}_d \times \lbr{(\sqrt{c_{d+1}}\phi_1, \cdots, \sqrt{c_{m}}\phi_1)}).
		\end{equation}  Let  $\vec u_d=\sbr{u_1,\ldots,u_d}\in \mathbb{M}_{k+1,d}$ and
		\begin{align*}
			g (t,\vec u_d) := (1-t)P_k(\tilde h(\vec{u}_d)) + t P_k|_{\mathbb{M}_{k+1,d}} \vec{u}_d, \quad \forall t \in [0,1]  .
		\end{align*}
		For any $\vec u_d\in  \partial\mathbb{M}_{k+1,d}  $ and $t \in [0,1]$, we have $g(t,\vec u_d) = P_k \vec u _d\neq \vec{0}$. Hence, we have
		\begin{align*}
			\deg(P_k \circ \tilde h,\mathbb{M}_{k+1,d},\vec{0}) = \deg(P_k|_{\mathbb{M}_{k+1,d}},\mathbb{M}_{k+1,d},\vec{0}) = 1,
		\end{align*}
		and so, there exists a $\vec{w}_d \in \mathbb{M}_{k+1,d}$ such that $P_k( \tilde h(\vec{w}_d)) = \vec{0}$, i.e., $ \tilde h(\vec{w}_d) \in  {\HR_{k,d}^\bot} \cap S_{\vec{c},d}=\mathbb{S}_{k,d}^\bot$. This yields that  relation \eqref{sq3} holds, and we complete the proof.
	\end{proof}
	
	\begin{remark} \label{rmktildeck semi-nodal}
		{\rm Let $k$ be the positive integer such that  $\Lambda_k<\Lambda_{k+1}$. Similar to Remark \ref{rmktildeck}, there exists $\tilde c_{k,d} > 0$ such that for any $\vec{c} = (c_1,\cdots,c_m)$ with $c_i > 0$, $i = 1,\cdots,m$, and $\frac{\max_{1 \le i \le m} c_i^2}{\min_{1 \le i \le m} c_i} < \tilde c_{k,d}$, we can always take $\rho > 0$ such that \eqref{defi of rho}, \eqref{eqci5.1} and \eqref{eqci5.2} hold true.}
	\end{remark}
	
	\begin{proposition} \label{propwelldefined semi-nodal}
		For any $\rho> 0$  and  $\vec{c} = (c_1,\cdots,c_m) \in \sbr{\R^+}^m$ satisfying \eqref{defi of rho},    \eqref{eqci5.1} and  \eqref{eqci5.2},  then the value $m_{\vec{c},\rho,k,d}^\delta$ is well defined and finite for $0 < \delta < \delta_{0,d}$ where $\delta_{0,d}$ is given in Lemma \ref{lem>0 semi-nodal}. Moreover,
		\begin{align} \label{eqestiofm semi-nodal}
			\sup_{\partial \mathbb{M}_{k+1,d} \times \lbr{(\sqrt{c_{d+1}}\phi_1, \cdots, \sqrt{c_{m}}\phi_1)}}E(\vec{u}) & <  {\inf_{  	\B_\rho \cap \sbr{\mathbb{S}_{k,d}^\bot \times S_{\vec{c},m-d}}}E(\vec{u})} \nonumber \\
			& \le m_{\vec{c},\rho,k,d}^\delta \le \sup_{\mathbb{M}_{k+1,d} \times \lbr{(\sqrt{c_{d+1}}\phi_1, \cdots, \sqrt{c_{m}}\phi_1)}}E(\vec{u}) < M_1.
		\end{align}
	\end{proposition}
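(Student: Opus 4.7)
The proposition is the partial-link analogue of Proposition \ref{propwelldefined}, so I would follow essentially the same template but with the partial vector link in place of the vector link and with the auxiliary components fixed to the positive reference point $(\sqrt{c_{d+1}}\phi_1,\ldots,\sqrt{c_m}\phi_1)$. The plan is to (i) verify that $\mathcal{L}_{\rho,k,d}$ is nonempty, (ii) show that the ``sup over $A\cap S_d^*(\delta)$'' is taken over a nonempty set, and (iii) sandwich $m_{\vec{c},\rho,k,d}^\delta$ between the quantities appearing in \eqref{eqestiofm semi-nodal}.

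First I would invoke Lemma \ref{lemnemp semi-nodal}, which says the specific set $A_0 := \mathbb{M}_{k+1,d}\times\{(\sqrt{c_{d+1}}\phi_1,\ldots,\sqrt{c_m}\phi_1)\}$ lies in $\mathcal{L}_{\rho,k,d}$; this delivers nonemptiness of the minimax class and also gives immediately the upper bound $m_{\vec{c},\rho,k,d}^\delta \le \sup_{A_0} E$, while the strict inequality $\sup_{A_0} E < M_1$ is the second statement of \eqref{s10} in Lemma \ref{lemma 5.2}. Next, for any $A\in\mathcal{L}_{\rho,k,d}$, applying the partial link property with the identity map $h=\mathbf{id}$ produces an element $\vec{u}\in A$ with $T_d(\vec{u})\in\mathbb{S}_{k,d}^\bot$; combined with $A\subset\bigcap_{i=d+1}^m \PR_i \cap \B_\rho \subset S_{\vec c}$, this forces $\vec{u}\in\B_\rho\cap(\mathbb{S}_{k,d}^\bot\times S_{\vec{c},m-d})$. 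Lemma \ref{lem>0 semi-nodal} then gives $\mathrm{dist}(\vec{u},\PR^{(d)})\ge \delta_{0,d}>\delta$, so $\vec{u}\in S_d^*(\delta)$, proving that $A\cap S_d^*(\delta)\neq\emptyset$ and that $m_{\vec{c},\rho,k,d}^\delta$ is well defined.

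The lower bound comes from the same observation: for every $A\in\mathcal{L}_{\rho,k,d}$ the point $\vec{u}$ produced above satisfies
\begin{equation*}
\sup_{A\cap S_d^*(\delta)} E \;\ge\; E(\vec{u}) \;\ge\; \inf_{\B_\rho\cap(\mathbb{S}_{k,d}^\bot\times S_{\vec{c},m-d})} E,
\end{equation*}
and taking the infimum over $A$ yields $m_{\vec{c},\rho,k,d}^\delta \ge \inf_{\B_\rho\cap(\mathbb{S}_{k,d}^\bot\times S_{\vec{c},m-d})} E$. Finally, the strict inequality at the extreme left of \eqref{eqestiofm semi-nodal} is exactly \eqref{s11} from Lemma \ref{lemma 5.2}, so the whole chain of inequalities is established and $m_{\vec{c},\rho,k,d}^\delta$ is finite.

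The only step that requires genuine care is verifying that the element produced by partial linking really lies in $S_d^*(\delta)$; this is where the positivity constraint $A\subset\bigcap_{i=d+1}^m\PR_i$ and the distance estimate of Lemma \ref{lem>0 semi-nodal} interact. Everything else is a direct transcription of the reasoning from Proposition \ref{propwelldefined}, with $T_d$ and the fixed positive tail taking over the role played by the identity and the full link in the sign-changing case.
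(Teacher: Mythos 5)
Your proposal is correct and follows essentially the same route as the paper, which simply states that one argues as in Proposition \ref{propwelldefined} using Lemma \ref{lemma 5.2} and Lemma \ref{lemnemp semi-nodal}; your write-up merely spells out the same steps (nonemptiness of $\mathcal{L}_{\rho,k,d}$, nonemptiness of $A \cap S_d^*(\delta)$ via the partial link with $h=\mathbf{id}$ together with Lemma \ref{lem>0 semi-nodal}, and the two-sided bounds from Lemma \ref{lemma 5.2}) in explicit form.
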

	
	\begin{proof}
		By Lemma \ref{lemma 5.2} and Lemma \ref{lemnemp semi-nodal}, arguing as the proof of Proposition \ref{propwelldefined} we can complete the proof.
	\end{proof}
	
	\section{General framework to prove some set is invariant under a descending flow on a product manifold} \label{Sect2}
	
	This section is devoted to generalize the following Proposition \ref{propbm} in a general setting where the mass constrained problem for Schr\"odinger systems is included. A similar extension in a setting where a single equation is considered has been made in \cite{JS} and it has been used in \cite{SZ} for Br\'ezis-Nirenberg problem and also used in \cite{JSZ}. Here we generalize the arguments in \cite[Section 4]{JS}. Readers can consult \cite[Theorem 4.1]{Dei} for the proof of Proposition \ref{propbm}. Similar results can be found in \cite[Theorem 1]{Bre} for finite-dimensional Euclidean space cases and in \cite[Theorem 3]{Mar1} or \cite{Mar} for Banach space cases.
	\medskip
	
	\begin{proposition} \label{propbm}
		Let $X$ be a Banach space, $B \subset X$, $u \in B$, $U_X$ be a neighborhood of $u$ in $X$ and $B \cap \overline{U_X}$ be closed. If $\widetilde V: \overline{U_X} \to X$ is a Lipschitz mapping and
		\begin{align} \label{eqd=0banachcase}
			\lim_{s \searrow 0}s^{-1}{\rm dist}(w + s\widetilde V(w),B) = 0, \quad \forall w \in \overline{U_X} \cap B,
		\end{align}
		then there exist $r = r(u) > 0$ and $\tilde\eta(t,u)$ satisfying
		\begin{align*}
			\left\{
			\begin{aligned}
				& \frac{\partial}{\partial t}\tilde\eta(t,u) = \widetilde V(\tilde\eta(t,u)), \quad \forall t \in [0,r), \\
				& \tilde\eta(0,u) = u, \quad \tilde\eta(t,u) \in B.
			\end{aligned}
			\right.
		\end{align*}
	\end{proposition}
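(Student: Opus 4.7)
The proof naturally splits into two stages: short-time \emph{existence} of the ODE $\partial_t \tilde\eta = \widetilde V(\tilde\eta)$ with $\tilde\eta(0,u) = u$, followed by \emph{invariance}, i.e.\ showing the resulting trajectory remains in $B$. For the first stage, since $\widetilde V$ is Lipschitz on $\overline{U_X}$ and $u$ lies in the open set $U_X$, the classical Picard--Lindel\"of iteration in Banach spaces yields some $r_1 = r_1(u) > 0$ and a unique $C^1$-curve $\tilde\eta(\cdot,u) : [0,r_1) \to \overline{U_X}$ solving the ODE; after shrinking $r_1$ if necessary, the trajectory stays in a closed ball $\overline{B_X(u,\epsilon)} \subset U_X$ and, by continuity, any point within distance $\epsilon/2$ of it also lies in $\overline{U_X}$.

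For the second stage I introduce the nonnegative continuous function $\phi(t) := \mathrm{dist}(\tilde\eta(t,u),\, B \cap \overline{U_X})$, which vanishes at $t=0$ since $u \in B \cap \overline{U_X}$. The plan is to establish the differential inequality $D^+\phi(t) \le L\,\phi(t)$ on $[0,r)$ for some $0 < r \le r_1$, where $L$ is the Lipschitz constant of $\widetilde V$, and then invoke a standard Gronwall-type comparison to conclude $\phi \equiv 0$; since $B \cap \overline{U_X}$ is closed, this forces $\tilde\eta(t,u) \in B$, which is the desired conclusion.

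The differential inequality is produced as follows. Fix $t$ and $h > 0$ both small. Choose an almost-projection $b_t \in B \cap \overline{U_X}$ with $\|\tilde\eta(t,u) - b_t\| \le \phi(t) + h^2$ (this is possible since the distance is taken to a closed set, up to the $h^2$ slack); for $r$ and $h$ small enough $b_t$ actually lies in $\overline{U_X}$. Apply the tangential hypothesis \eqref{eqd=0banachcase} at $w = b_t$ to produce $b_h \in B$ with $\|b_t + h\widetilde V(b_t) - b_h\| = o(h)$. By the triangle inequality and the Lipschitz property of $\widetilde V$,
\begin{align*}
\phi(t+h) & \le \|\tilde\eta(t+h,u) - b_h\| \\
& \le \|\tilde\eta(t+h,u) - \tilde\eta(t,u) - h\widetilde V(\tilde\eta(t,u))\| + \|\tilde\eta(t,u) - b_t\| \\
& \quad + h\,\|\widetilde V(\tilde\eta(t,u)) - \widetilde V(b_t)\| + \|b_t + h\widetilde V(b_t) - b_h\| \\
& \le o(h) + (1 + hL)\bigl(\phi(t) + h^2\bigr) + o(h),
\end{align*}
where the first $o(h)$ comes from differentiability of $\tilde\eta$ at $t$. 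Dividing by $h$ and letting $h \searrow 0$ yields $D^+\phi(t) \le L\,\phi(t)$, and then Gronwall gives $\phi(t) \le \phi(0) e^{Lt} = 0$.

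The main obstacle is a uniformity issue hidden in the $o(h)$ term coming from \eqref{eqd=0banachcase}: a priori that rate depends on the base point $b_t$, and $b_t$ varies with $t$. Since $\tilde\eta(\cdot,u)$ traces a compact arc in $\overline{U_X}$ on $[0, r_1/2]$ and the $b_t$ cluster near this arc, I would extract a common modulus of decay along the arc by a standard compactness/diagonal argument, or alternatively work with an explicit Euler polygonal approximation $w_{k+1} = w_k + h\widetilde V(w_k) + $ (correction keeping us in $B$) and pass to the limit as the step size vanishes --- this replaces the $\limsup$-type bookkeeping by a single uniform estimate. Once this uniformity is secured, choosing $r$ small compared with $\epsilon/L$ to keep both $b_t$ and $b_h$ inside $\overline{U_X}$ is routine and completes the argument.
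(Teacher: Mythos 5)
The paper does not actually prove Proposition~\ref{propbm}; it cites Deimling \cite[Theorem~4.1]{Dei} (and Martin \cite[Theorem~3]{Mar1}, Brezis \cite[Theorem~1]{Bre}), whose proofs construct Euler polygonal (Kamke-type) approximate solutions whose vertices lie in $B$ and then pass to the limit. Your primary route, estimating the Dini derivative of $\phi(t)=\mathrm{dist}(\tilde\eta(t,u),B\cap\overline{U_X})$, is essentially Brezis's original finite-dimensional argument. It works cleanly whenever the distance to $B\cap\overline{U_X}$ is attained: one fixes a true nearest point $b^*_t$, applies \eqref{eqd=0banachcase} at the single point $w=b^*_t$, obtains a bona fide $o(h)$, and the chain of estimates then gives $\phi(t+h)\le\phi(t)+hL\phi(t)+o(h)$, hence $D^+\phi\le L\phi$ and $\phi\equiv 0$ by Gronwall.

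In a general Banach space, however, the infimum defining $\phi(t)$ need not be attained, and this is where the argument genuinely breaks. You are forced to pick near-minimizers $b_{t,h}$ whose quality improves as $h\searrow 0$, which makes the base point in \eqref{eqd=0banachcase} depend on $h$ itself; the hypothesis is a single pointwise limit and does not furnish a rate that is uniform along the moving family $\{b_{t,h}\}_{h>0}$. (You describe the obstruction as $b_t$ varying with $t$, but the sharper problem is already present at \emph{fixed} $t$: it is the $h$-dependence of the near-minimizer.) Your proposed fix by ``compactness/diagonal'' does not apply: the near-minimizers live in a bounded but generally non-compact subset of $B\cap\overline{U_X}$, and the pointwise condition \eqref{eqd=0banachcase} does not self-improve to a locally uniform one (the map $w\mapsto s^{-1}\mathrm{dist}(w+s\widetilde V(w),B)$ is Lipschitz in $w$ only with constant $s^{-1}(1+sL)$, which blows up as $s\to 0$). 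The correct repair is exactly your briefly-mentioned alternative: build an $\varepsilon$-approximate polygonal solution with vertices in $B$, using \eqref{eqd=0banachcase} at each vertex to choose the next one with an adaptively small step, show these approximants converge uniformly, identify the limit with $\tilde\eta$ by Lipschitz uniqueness, and conclude $\tilde\eta(t,u)\in B\cap\overline{U_X}$ by closedness. This is what the cited references do, and it is where the real work of the proof lives; as it stands your proposal only gestures at it, so the argument is not complete.
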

	
	Before proceeding, we introduce our general setting in which the mass constraint is included. Let $(E,\langle \cdot,\cdot\rangle)$ and $(H,( \cdot,\cdot))$ be two \emph{infinite-dimensional} Hilbert spaces and assume that
	\begin{align*}
		E \hookrightarrow H \hookrightarrow E',
	\end{align*}
	with continuous injections. For simplicity, we assume that the continuous injection $E \hookrightarrow H$ has norm at most $1$ and identify $E$ with its image in $H$. We also introduce
	\begin{align*}
		\left\{
		\begin{aligned}
			\|u\|^2 & = \langle u,u \rangle, \\
			|u|^2 \ & = (u,u), \\
		\end{aligned}
		\right.
		\quad \quad u \in E,
	\end{align*}
	and, for $\vec{c} = (c_1,c_2,\cdots,c_m)$ with $c_j > 0$, $j = 1,2,\cdots,m$, we define
	\begin{align*}
		S_{\vec{c}} := \bigg\{\vec{u} = (u_1,u_2,\cdots,u_m) \in \underbrace{E\times E\times \cdots \times E}_{m}: |u_j|^2 = c_j, j = 1,2,\cdots,m \bigg\}.
	\end{align*}
	For $c > 0$ we set
	\begin{align*}
		S_c := \bigl\{u \in E: |u|^2 = c\bigr\}.
	\end{align*}
	Clearly, $S_{\vec{c}} \cong S_{c_1} \times S_{c_2} \times \cdots \times S_{c_m}$.
	
	\medskip
	Our main result reads as follows.
	\begin{proposition} \label{propbmmanifold}
		Let $\vec{u} \in B_{\vec{c}} \subset S_{\vec{c}}$, $U$ be a neighborhood of $\vec{u}$ in $S_{\vec{c}}$ and $B_{\vec{c}} \cap \overline{U}$ closed in $S_{\vec{c}}$. Let $V$ be a Lipschitz mapping on $\overline{U}$ with
		$$
		V(\vec{w}) \in T_{\vec{w}}S_{\vec{c}} \cong T_{w_1} S_{c_1} \times T_{w_2} S_{c_2} \times \cdots \times T_{w_m} S_{c_m}
		$$
		for any $\vec{w} = (w_1,w_2,\cdots,w_m) \in \overline{U}$, where $T_{\vec{w}}S_{\vec{c}}$ is the tangent space of $S_{\vec{c}}$ at $\vec{w}$ defined as
		\begin{align*}
			T_{\vec{w}}S_{\vec{c}} := \bigg\{\vec{\phi} = (\phi_1,\phi_2,\cdots,\phi_m) \in \underbrace{E\times E\times \cdots \times E}_{m}: (\phi_1,w_1) = (\phi_2,w_2) = \cdots = (\phi_m,w_m) = 0\bigg\},
		\end{align*}
		which satisfies
		\begin{align} \label{eqd=0}
			\lim_{s \searrow 0}s^{-1}{\rm dist}(\vec{w} + sV(\vec{w}),B_{\vec{c}}) = 0, \quad \forall \vec{w} \in \overline{U} \cap B_{\vec{c}}.
		\end{align}
		Then there exist $r = r(\vec{u}) > 0$ and $\eta(t,\vec{u})$ satisfying
		\begin{align*}
			\left\{
			\begin{aligned}
				& \frac{\partial}{\partial t}\eta(t,\vec{u}) = V(\eta(t,\vec{u})), \quad \forall t \in [0,r), \\
				&\eta(0,\vec{u}) = \vec{u}, \quad \eta(t,\vec{u}) \in B_{\vec{c}}.
			\end{aligned}
			\right.
		\end{align*}
	\end{proposition}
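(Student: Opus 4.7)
The plan is to reduce Proposition \ref{propbmmanifold} to the Banach-space statement Proposition \ref{propbm} applied in the ambient Hilbert space $X := \underbrace{E\times \cdots \times E}_{m}$, after extending the tangent field $V$ to a Lipschitz field $\widetilde V$ on a small open ball $U_X$ about $\vec u$ in $X$. The natural extension is componentwise $L^2$-normalization: on the open set where each $|v_j|>0$, define
\[
\pi(\vec v) := \left(\frac{\sqrt{c_j}}{|v_j|}\,v_j\right)_{j=1}^m.
\]
Since $|u_j| = \sqrt{c_j} > 0$ for each $j$, a direct computation using the continuous inclusion $E \hookrightarrow H$ shows that $\pi$ is well defined and Lipschitz on some ball $B_X(\vec u, \rho_0)$; after shrinking $\rho_0$ we may also arrange $\pi(B_X(\vec u,\rho_0)) \subset U$. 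Setting $U_X := B_X(\vec u, \rho_0)$ and $\widetilde V := V \circ \pi$ on $\overline{U_X}$, the map $\widetilde V$ is Lipschitz as the composition of Lipschitz maps.

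Next I would verify the hypotheses of Proposition \ref{propbm} for the data $(X, B_{\vec c}, \vec u, U_X, \widetilde V)$. Closedness of $B_{\vec c} \cap \overline{U_X}$ in $X$: the product sphere $S_{\vec c}$ is closed in $X$ as a level set of the continuous map $\vec v \mapsto (|v_1|^2,\ldots,|v_m|^2)$, so the hypothesis that $B_{\vec c}\cap\overline U$ be closed in $S_{\vec c}$ is equivalent to closedness in $X$. Shrinking $\rho_0$ further so that $\overline{U_X}\cap S_{\vec c}\subset \overline U$ (using that the manifold topology on $S_{\vec c}$ is the subspace topology inherited from $X$), we have $B_{\vec c}\cap\overline{U_X} = (B_{\vec c}\cap\overline U)\cap\overline{U_X}$, which is closed in $X$. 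For the transversality condition \eqref{eqd=0banachcase}: at any $\vec w \in \overline{U_X}\cap B_{\vec c}$ we have $\vec w \in S_{\vec c}$, hence $\pi(\vec w) = \vec w$ and $\widetilde V(\vec w) = V(\vec w)$; the required estimate
\[
\lim_{s\searrow 0} s^{-1}\, \text{dist}(\vec w + s \widetilde V(\vec w), B_{\vec c}) = 0
\]
is then precisely the hypothesis \eqref{eqd=0}.

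Proposition \ref{propbm} then yields $r>0$ and a curve $\tilde\eta(\cdot,\vec u):[0,r)\to X$ with $\partial_t \tilde\eta = \widetilde V(\tilde\eta)$, $\tilde\eta(0,\vec u) = \vec u$, and $\tilde\eta(t,\vec u)\in B_{\vec c}$ for all $t\in[0,r)$. The key observation that closes the argument is that since $\tilde\eta(t,\vec u)\in B_{\vec c}\subset S_{\vec c}$, we have $\pi(\tilde\eta(t,\vec u)) = \tilde\eta(t,\vec u)$, whence $\widetilde V(\tilde\eta) = V(\tilde\eta)$. Therefore $\tilde\eta$ actually solves the original ODE driven by $V$, and setting $\eta := \tilde\eta$ produces the claimed flow.

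The main obstacle is checking that $\pi$ is genuinely Lipschitz in the $E^m$-norm (not merely the $H^m$-norm) on a neighborhood of $\vec u$: this uses that $|\cdot|$ is smooth away from $0$, that the values $|u_j| = \sqrt{c_j}$ are bounded uniformly below, and crucially that the inclusion $E\hookrightarrow H$ is continuous with norm at most $1$, so that an $E$-ball around $\vec u$ remains inside an $H$-annulus on which the reciprocals $1/|v_j|$ are controlled and the normalizing factors depend Lipschitz-continuously on the $E$-data. A secondary point, already used above, is harmonizing the manifold hypothesis on $\overline U$ with the ambient closedness needed for Proposition \ref{propbm}; this is handled by one further shrinkage of $\rho_0$.
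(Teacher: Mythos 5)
Your proof is correct, and it takes a genuinely different and cleaner route than the paper's. The paper \emph{fattens} $B_{\vec c}$ to the set
\[
B := \bigl\{(\theta_1 w_1,\ldots,\theta_m w_m) : \theta_j \in [1/2,3/2],\ \vec w \in B_{\vec c}\bigr\},
\]
fattens $U$ analogously, and extends $V$ with the scaled formula $\widetilde V(\vec v) = \bigl(\tfrac{|v_j|}{\sqrt{c_j}} V_j(\vec v_*)\bigr)_j$. It then checks the transversality condition \eqref{eqd=0banachcase} at every scaled point of $\overline{U_E}\cap B$ (this is a genuine computation, using the scaled approximants $\theta_j v_{s,j}$), obtains from Proposition \ref{propbm} a flow $\tilde\eta$ that stays in the fattened set $B$, and finally recovers the desired $\eta \subset B_{\vec c}$ by separately solving the ODE for $V$ on $S_{\vec c}$ and invoking local uniqueness to identify the two curves. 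You instead take $B = B_{\vec c}$ and the unscaled extension $\widetilde V = V\circ\pi$: since $\overline{U_X}\cap B_{\vec c}\subset S_{\vec c}$, the transversality check collapses to the hypothesis \eqref{eqd=0} with no computation, Proposition \ref{propbm} directly delivers $\tilde\eta(t)\in B_{\vec c}\subset S_{\vec c}$, and on $S_{\vec c}$ one has $\widetilde V = V$, so $\tilde\eta$ already solves the $V$-ODE — no uniqueness argument is needed. The only real work in either route is the Lipschitz bound for the normalization $\pi$ on a small $E^m$-ball about $\vec u$, which you correctly reduce to the lower bound $|v_j|\ge\sqrt{c_j}-\rho_0>0$ coming from the continuous embedding $E\hookrightarrow H$ with norm at most $1$; the remaining shrinkages of $\rho_0$ (so that $\pi(\overline{U_X})\subset\overline U$ and $\overline{U_X}\cap S_{\vec c}\subset\overline U$) are handled as you indicate, and the closedness of $B_{\vec c}\cap\overline{U_X}$ in $X$ follows since $S_{\vec c}$ is closed in $X$. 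In short, your argument yields the same conclusion while bypassing both the fattened set and the ODE-uniqueness step.
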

	
	\begin{proof}
		Let
		\begin{align*}
			B & := \bigl\{(\theta_1 w_1,\theta_2 w_2,\cdots,\theta_m w_m): \theta_j \in [1/2,3/2], ~j = 1,2,\cdots,m, ~(w_1,w_2,\cdots,w_m) \in B_{\vec{c}}\bigr\}, \\
			U_E & := \bigl\{(\theta_1 w_1,\theta_2 w_2,\cdots,\theta_m w_m): \theta_j \in (1/2,3/2), ~j = 1,2,\cdots,m, ~(w_1,w_2,\cdots,w_m) \in U\bigr\}.
		\end{align*}
		Then we can check that $$\vec{u} \in B \subset \underbrace{E\times E\times \cdots \times E}_{m},$$
		and can check that $U_E$ is a neighborhood of $u$ in $E\times E\times \cdots \times E$ and $B \cap \overline{U_E}$ is closed in $E$. For any $\vec{v} = (v_1,v_2,\cdots,v_m) \in \overline{U_E}$, we define
		\begin{align} \label{def:tildeV}
			\widetilde V(\vec{v}) := \left( \frac{|v_1|}{\sqrt {c_1}}V_1(\vec{v}_*), \frac{|v_2|}{\sqrt {c_2}}V_2(\vec{v}_*), \cdots, \frac{|v_m|}{\sqrt {c_m}}V_m(\vec{v}_*)\right)  \in \underbrace{E\times E\times \cdots \times E}_{m},
		\end{align}
		where
		\begin{align*}
			\vec{v}_* = \left(\frac{\sqrt {c_1}}{|v_1|} v_1, \frac{\sqrt {c_2}}{|v_2|} v_2, \cdots, \frac{\sqrt {c_m}}{|v_m|} v_m \right) \in S_{\vec{c}},
		\end{align*}
		and $V(\vec{w}) = (V_1(\vec{w}),V_2(\vec{w}),\cdots,V_m(\vec{w}))$ for $\vec{w} \in S_{\vec{c}}$. Next we check that $\widetilde{V}$ is a Lipschitz mapping. Let $C > 0$ be a constant which may change line by line in this proof. Since $V$ is Lipschitz, for any $\vec{w} = (w_1,w_2,\cdots,w_m), \vec{w}' = (w_1',w_2',\cdots,w_m') \in \overline{U}$ we have
		\begin{align*}
			\|V_j(\vec{w}')-V_j(\vec{w})\| \leq C\|\vec{w}'-\vec{w}\|_{E\times E\times \cdots \times E}, \quad j = 1,2,\cdots,m.
		\end{align*}
		For $$(\theta_1 w_1,\theta_2 w_2,\cdots,\theta_m w_m) \in \overline{U_E}, \quad (\theta_1' w_1',\theta_2' w_2',\cdots,\theta_m' w_m') \in \overline{U_E}$$ with $\vec{w} = (w_1,w_2,\cdots,w_m) \in \overline{U}$ and $\vec{w}' = (w_1',w_2',\cdots,w_m') \in \overline{U}$, we have
		\begin{align*}
			& \widetilde V(\theta_1 w_1,\theta_2 w_2,\cdots,\theta_m w_m) = \left(\theta_1V_1(\vec{w}),\theta_2V_2(\vec{w}), \cdots, \theta_mV_m(\vec{w}) \right), \\
			& \widetilde V(\theta_1' w_1',\theta_2' w_2',\cdots,\theta_m' w_m') = (\theta_1'V_1(\vec{w}'),\theta_2'V_2(\vec{w}'), \cdots, \theta_m'V_m(\vec{w}') ).
		\end{align*}
		Computing directly, we get
		\begin{align*}	
			\|\theta_j'V_j(\vec{w}') - \theta_j V_j(\vec{w})\| & \leq \theta_j'\|V_j(\vec{w}')-V_j(\vec{w})\| + |\theta_j'-\theta_j|\|V_j(\vec{w})\| \\
			& \leq \ C\theta_j'\|\vec{w}'-\vec{w}\|_{E\times E\times \cdots \times E} + |\theta_j'-\theta_j |\|V_j(\vec{w})\|, \quad j = 1,2,\cdots,m.
		\end{align*}
		Note that $\theta_j' = |\theta_j'w_j'|/\sqrt{c_j}$ and $\theta_j  = |\theta_j w_j|/\sqrt{c_j}$, $j = 1,2,\cdots,m$. We have
		\begin{align*}
			|\theta_j'-\theta_j| \leq c_j^{-\frac12}|\theta_j'w_j'-\theta_j w_j|.
		\end{align*}
		Then it holds
		\begin{align*}
			\|\vec{w}'-\vec{w}\|_{E\times E\times \cdots \times E} & \leq C\sum_{j=1}^m\|w_j'-w_j\| \\
			& \leq C\sum_{j=1}^m\|\theta_j'w_j'-\theta_j'w_j\| \\
			& \leq C\sum_{j=1}^m\left( \|\theta_j'w_j'-\theta_jw_j\| + \|w_j\||\theta_j'-\theta_j|\right) \\
			& \leq C\sum_{j=1}^m\left( \|\theta_j'w_j'-\theta_jw_j\| + c_j^{-\frac12}\|w_j\||\theta_j'w_j'-\theta_j w_j|\right) \\
			& \leq C\sum_{j=1}^m\|\theta_j'w_j'-\theta_jw_j\| \\
			& \leq C\|(\theta_1' w_1',\theta_2' w_2',\cdots,\theta_m' w_m') - (\theta_1 w_1,\theta_2 w_2,\cdots,\theta_m w_m)\|_{E\times E\times \cdots \times E}.
		\end{align*}
		It follows that
		\begin{align*}
			& \ \|\theta_j'V_j(\vec{w}') - \theta_j V_j(\vec{w})\| \\
			\leq & \ C\|(\theta_1' w_1',\theta_2' w_2',\cdots,\theta_m' w_m') - (\theta_1 w_1,\theta_2 w_2,\cdots,\theta_m w_m)\|_{E\times E\times \cdots \times E} + |\theta_j'-\theta_j|\|V_j(\vec{w})\| \\
			\leq & \ C\|(\theta_1' w_1',\theta_2' w_2',\cdots,\theta_m' w_m') - (\theta_1 w_1,\theta_2 w_2,\cdots,\theta_m w_m)\|_{E\times E\times \cdots \times E} + C|\theta_j'w_j'-\theta_j w_j| \\
			\leq & \ C\|(\theta_1' w_1',\theta_2' w_2',\cdots,\theta_m' w_m') - (\theta_1 w_1,\theta_2 w_2,\cdots,\theta_m w_m)\|_{E\times E\times \cdots \times E} + C\|\theta_j'w_j'-\theta_j w_j\| \\
			\leq & \ C\|(\theta_1' w_1',\theta_2' w_2',\cdots,\theta_m' w_m') - (\theta_1 w_1,\theta_2 w_2,\cdots,\theta_m w_m)\|_{E\times E\times \cdots \times E}, \quad j = 1,2,\cdots,m,
		\end{align*}
		which implies that
		\begin{align*}
			& \ \|\widetilde V(\theta_1' w_1',\theta_2' w_2',\cdots,\theta_m' w_m') - \widetilde V(\theta_1 w_1,\theta_2 w_2,\cdots,\theta_m w_m)\|_{E\times E\times \cdots \times E} \\
			\leq & \ C\|(\theta_1' w_1',\theta_2' w_2',\cdots,\theta_m' w_m') - (\theta_1 w_1,\theta_2 w_2,\cdots,\theta_m w_m)\|_{E\times E\times \cdots \times E},
		\end{align*}
		that is, $\widetilde{V}$ is Lipschitz on $\overline{U_E}$.
		
		Now, we apply Proposition \ref{propbm} with $X = E\times E\times \cdots \times E$, $U_X = U_E$, and $\widetilde V$ given by \eqref{def:tildeV}. It needs to check \eqref{eqd=0banachcase}. For any $(\theta_1 w_1,\theta_2 w_2,\cdots,\theta_m w_m) \in \overline{U_E} \cap B$ with $\vec{w} = (w_1,w_2,\cdots, w_m) \in \overline{U} \cap B_{\vec{c}}$, using \eqref{eqd=0} we get the existence of $\vec{v}_s = (v_{s,1},v_{s,2},\cdots,v_{s,m}) \in B_{\vec{c}}$ such that
		\begin{align*}
			\lim_{s \searrow 0}s^{-1}\|\vec{w} + sV(\vec{w})-\vec{v}_s\|_{E\times E\times \cdots \times E} = 0.
		\end{align*}
		Hence,
		\begin{align*}
			& \lim_{s \searrow 0}s^{-1}\text{dist}((\theta_1 w_1,\theta_2 w_2,\cdots,\theta_m w_m) + s\widetilde V(\theta_1 w_1,\theta_2 w_2,\cdots,\theta_m w_m),B) \leq \\
			& \lim_{s \searrow 0}s^{-1}\|(\theta_1 w_1,\theta_2 w_2,\cdots,\theta_m w_m) + s(\theta_1 V_1(\vec{w}),\theta_2 V_2(\vec{w}),\cdots,\theta_m V_m(\vec{w})) - (\theta_1 v_{s,1},\theta_2 v_{s,2},\cdots,\theta_m v_{s,m})\|_{E\times E\times \cdots \times E} \\
			& \leq \max\{\theta_1,\theta_2,\cdots,\theta_m\}\lim_{s \searrow 0}s^{-1}\|\vec{w} + sV(\vec{w})-\vec{v}_s\|_{E\times E\times \cdots \times E} = 0.
		\end{align*}
		Then, by Proposition \ref{propbm} we obtain the existence of $r = r(\vec{u}) > 0$ and $\tilde\eta(t,\vec{u})$ such that
		\begin{align*}
			\left\{
			\begin{aligned}
				& \frac{\partial}{\partial t}\tilde\eta(t,\vec{u}) = \widetilde V(\tilde\eta(t,\vec{u})), \quad \forall t \in [0,r), \\
				& \tilde\eta(0,\vec{u}) = \vec{u} \in B_{\vec{c}}, \quad \tilde\eta(t,\vec{u}) \in B.
			\end{aligned}
			\right.
		\end{align*}
		Let $\eta(\cdot,\vec{u}): [0,T) \to S_{\vec{c}}$ be the local solution of
		\begin{align*}
			\left\{
			\begin{aligned}
				& \frac{\partial}{\partial t}\eta(t,\vec{u}) = V(\eta(t,\vec{u})), \\
				& \eta(0,\vec{u}) = \vec{u}.
			\end{aligned}
			\right.	
		\end{align*}
		Clearly $\eta(\cdot,\vec{u})$ can be viewed as a function mapping $[0,T)$ to $E\times E \times \cdots \times E$ since $S_{\vec{c}} \subset E\times E \times \cdots \times E$. It is easy to see that $\eta(\cdot,\vec{u}): [0,T) \to E\times E \times \cdots \times E$ satisfies
		\begin{align} \label{eqetaonE}
			\left\{
			\begin{aligned}
				& \frac{\partial}{\partial t}\eta(t,\vec{u}) = \widetilde V(\eta(t,\vec{u})), \\
				& \eta(0,\vec{u}) = \vec{u},
			\end{aligned}
			\right.	
		\end{align}
		since $\widetilde V(\vec{w}) = V(\vec{w})$ for any $\vec{w} \in S_{\vec{c}}$. By the local existence and uniqueness of the solution for equation \eqref{eqetaonE} and by decreasing $r >0$ if necessary, we know that $\eta(t,\vec{u}) \equiv \tilde \eta(t,\vec{u})$ for $t \in [0,r)$. Hence, $\eta(t,\vec{u}) \in B \cap S_{\vec{c}} = B_{\vec{c}}$ for all $t \in [0,r)$ and this completes the proof.
	\end{proof}
	
	\begin{corollary} \label{corbm}
		Let $B_{\vec{c}} \subset S_{\vec{c}}$ be closed in $S_{\vec{c}}$.
		Suppose that $V$ is a locally Lipschitz mapping with
		$$
		V(\vec{w}) \in T_{\vec{w}}S_{\vec{c}} \cong T_{w_1}S_{c_1} \times T_{w_2}S_{c_2} \times \cdots \times T_{w_m}S_{c_m}
		$$
		for any $\vec{w} = (w_1,w_2,\cdots,w_m) \in S_{\vec{c}}$ and define $\eta(t,\cdot): S_{\vec{c}} \to S_{\vec{c}}$ by
		\begin{align*}
			\left\{
			\begin{aligned}
				& \frac{\partial}{\partial t}\eta(t,\vec{u}) = V(\eta(t,\vec{u})), \\
				& \eta(0,\vec{u}) = \vec{u}.
			\end{aligned}
			\right.
		\end{align*}
		Let $T_{\vec{u}} > 0$ be the maximal time such that $\eta(\cdot,\vec{u})$ exists for $\vec{u} \in S_{\vec{c}}$ and $B^* \subset S_{\vec{c}}$ be an open subset such that $\eta(t,\vec{u}) \in B^*$ for any $\vec{u} \in B^*$ and all $0 < t < T_{\vec{u}}$. Then, if
		\begin{align} \label{eqbm}
			\lim_{s \searrow 0}s^{-1}\text{dist}(\vec{u} + sV(\vec{u}),B_{\vec{c}}) = 0, \quad \forall \vec{u} \in B_{\vec{c}} \cap B^*,
		\end{align}
		it holds that $\eta(t,\vec{u}) \in B_{\vec{c}} \cap B^*$ for any $\vec{u} \in B_{\vec{c}} \cap B^*$ and all $0 < t < T_{\vec{u}}$.
	\end{corollary}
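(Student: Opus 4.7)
The plan is to use the maximal-time / continuation argument based on Proposition \ref{propbmmanifold}, which provides the short-time local trajectory staying inside $B_{\vec{c}}$ at every point of $B_{\vec{c}}$ where the subtangent condition holds. Fix any $\vec{u}\in B_{\vec{c}}\cap B^{*}$ and define
\[
T^{*}:=\sup\bigl\{t\in[0,T_{\vec{u}}):\ \eta(s,\vec{u})\in B_{\vec{c}}\cap B^{*}\ \text{for all}\ s\in[0,t]\bigr\}.
\]
I want to show $T^{*}=T_{\vec{u}}$. I will establish (i) $T^{*}>0$, and (ii) the set $\{t\in[0,T_{\vec{u}}):\eta(s,\vec{u})\in B_{\vec{c}}\cap B^{*}\ \forall s\le t\}$ is both open and closed in $[0,T_{\vec{u}})$.

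For step (i), by hypothesis $\vec{u}\in B^{*}$ with $B^{*}$ open in $S_{\vec{c}}$, and $V$ is locally Lipschitz, so I can choose a neighborhood $U$ of $\vec{u}$ in $S_{\vec{c}}$ with $\overline{U}\subset B^{*}$ on which $V$ is Lipschitz. Since $B_{\vec{c}}$ is closed, $B_{\vec{c}}\cap\overline{U}$ is closed in $S_{\vec{c}}$. Condition \eqref{eqbm} applied on the set $B_{\vec{c}}\cap\overline{U}\subset B_{\vec{c}}\cap B^{*}$ gives exactly \eqref{eqd=0} needed for Proposition \ref{propbmmanifold}. That proposition yields $r>0$ and a curve $t\mapsto\tilde\eta(t,\vec{u})$ with $\tilde\eta(0,\vec{u})=\vec{u}$, $\partial_{t}\tilde\eta=V(\tilde\eta)$, and $\tilde\eta(t,\vec{u})\in B_{\vec{c}}$ on $[0,r)$. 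By uniqueness for the ODE $\partial_{t}\xi=V(\xi)$ on $S_{\vec{c}}$ (which holds because $V$ is locally Lipschitz on $S_{\vec{c}}$), $\tilde\eta(t,\vec{u})=\eta(t,\vec{u})$ for $t\in[0,r)$, and by continuity, after possibly shrinking $r$, also $\eta(t,\vec{u})\in B^{*}$ on $[0,r)$. Hence $T^{*}\ge r>0$.

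For step (ii), suppose by contradiction that $T^{*}<T_{\vec{u}}$. By the forward-invariance hypothesis on $B^{*}$, we already have $\eta(t,\vec{u})\in B^{*}$ for every $t\in[0,T_{\vec{u}})$; in particular $\eta(T^{*},\vec{u})\in B^{*}$. Moreover, $\eta(\cdot,\vec{u}):[0,T^{*})\to B_{\vec{c}}$ and $B_{\vec{c}}$ is closed in $S_{\vec{c}}$, so by continuity $\vec{u}_{1}:=\eta(T^{*},\vec{u})\in B_{\vec{c}}\cap B^{*}$. Apply the same argument of step (i) at the point $\vec{u}_{1}$: Proposition \ref{propbmmanifold} together with local uniqueness of the flow produces $r_{1}>0$ with $\eta(t,\vec{u}_{1})\in B_{\vec{c}}\cap B^{*}$ for $t\in[0,r_{1})$. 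Using $\eta(t,\vec{u})=\eta(t-T^{*},\vec{u}_{1})$ for $t$ slightly above $T^{*}$ (again by uniqueness of the flow and the semigroup property), I conclude $\eta(t,\vec{u})\in B_{\vec{c}}\cap B^{*}$ on $[T^{*},T^{*}+r_{1})$, contradicting the maximality of $T^{*}$. Therefore $T^{*}=T_{\vec{u}}$, which is the claim.

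The only genuinely delicate point is verifying, at each invocation of Proposition \ref{propbmmanifold}, that the neighborhood $U$ can be chosen both so that $V$ is Lipschitz on $\overline{U}$ and so that $\overline{U}\subset B^{*}$; this is where I use that $V$ is locally Lipschitz and that $B^{*}$ is open. Once this is secured, the semigroup identity $\eta(t,\eta(s,\vec{u}))=\eta(t+s,\vec{u})$ makes the continuation argument routine and closes the proof.
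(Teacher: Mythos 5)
Your proof is correct and follows essentially the same continuation argument as the paper: apply Proposition \ref{propbmmanifold} locally at a point of $B_{\vec{c}} \cap B^*$ (using openness of $B^*$, local Lipschitz continuity of $V$, and closedness of $B_{\vec{c}}$ to set up the hypotheses), use the forward-invariance assumption on $B^*$ together with closedness of $B_{\vec{c}}$ to show the supremal time of membership is attained, and reapply the local result to derive a contradiction with maximality. The only cosmetic difference is that the paper tracks membership in $B_{\vec{c}}$ alone in its supremum (since membership in $B^*$ is automatic for all $t$), whereas you track $B_{\vec{c}} \cap B^*$; this is an inessential presentational variation.
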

	
	\begin{proof}
		For any $\vec{u} \in B_{\vec{c}} \cap B^*$, since $B^*$ is open and $V$ is a locally Lipschitz mapping, we can choose a neighborhood $U$ of $\vec{u}$ in $S_{\vec{c}}$ such that $\overline{U} \subset B^*$ and $V$ is Lipschitz on $\overline{U}$. Also, it is clear that $B_{\vec{c}} \cap \overline{U}$ is closed since $B_{\vec{c}}$ is closed in $S_{\vec{c}}$ and that \eqref{eqbm} implies
		\begin{align*}
			\lim_{s \searrow 0}s^{-1}\text{dist}(\vec{w} + sV(\vec{w}),B_{\vec{c}}) = 0, \quad \forall \vec{w} \in \overline{U} \cap B_{\vec{c}}.
		\end{align*}
		Proposition \ref{propbmmanifold} then yields that $\eta(t,\vec{u}) \in B_{\vec{c}}$ in $t \in [0,r)$ for some $0 < r \leq T_{\vec{u}}$. By the assumption on $B^*$, we know $\eta(t,\vec{u}) \in B^*$ for all $0 < t < T_{\vec{u}}$. Let
		\begin{align*}
			r^* := \sup\bigl\{T: 0 < T < T_{\vec{u}}, \eta(t,\vec{u}) \in B_{\vec{c}} \text{ in } t \in [0,T)\bigr\}.
		\end{align*}
		Obviously, $r^ * \geq r$. We claim that $r^* = T_{\vec{u}}$. By contradiction, we suppose $r^* < T_{\vec{u}}$. According to the definition of $r^*$, we know that $\eta(t,\vec{u}) \in B_{\vec{c}}$ for any $t \in [0,r^*)$. Using the closure of $B_{\vec{c}}$ we deduce that $\eta(r^*,\vec{u}) \in  B_{\vec{c}}$.  Since $\eta(r^*,\vec{u}) \in  B^*$, that is, $\eta(r^*,\vec{u}) \in  B_{\vec{c}} \cap B^*$, using Proposition \ref{propbmmanifold} again we have $\eta(t,\vec{u}) \in B_{\vec{c}}$ in $t \in [0,r^* + \bar r)$ for some $0 < \bar r \leq T_u-r^*$. This contradicts the definition of $r^*$ and we complete the proof.
	\end{proof}

	In the next lemma, we give a condition to check that hypothesis \eqref{eqd=0} (or  hypothesis  \eqref{eqbm}) holds.
	
	\begin{lemma} \label{lemconvex}
		Let $B_{\vec{c}} = \tilde B \cap S_{\vec{c}}$ where $\tilde B \subset E\times E\times \cdots \times E$ is closed, convex in $E\times E\times \cdots \times E$ and satisfies the assumption:
		\begin{align} \label{eqconecondition}
			(k_1w_1, k_2w_2, \cdots, k_mw_m) \in \tilde B, \quad \forall (k_1,k_2,\cdots,k_m) \in (0,1]^m, ~(w_1,w_2,\cdots,w_m) \in \tilde B.
		\end{align}
		For $\vec{u} \in B_{\vec{c}}$, if $V(\vec{u})$ has the form of $G(\vec{u})-\vec{u}$ and $G(\vec{u}) \in \tilde B$, then
		\begin{align} \label{eqd=0atapoint}
			\lim_{s \searrow 0}s^{-1}{\rm dist}(\vec{u} + sV(\vec{u}),B_{\vec{c}}) = 0.
		\end{align}
	\end{lemma}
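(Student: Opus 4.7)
The plan is to construct an explicit nearby point in $B_{\vec{c}}$ by projecting the line segment from $\vec{u}$ toward $G(\vec{u})$ back onto $S_{\vec{c}}$ coordinate by coordinate, and to exploit the tangency of $V(\vec{u})$ to $S_{\vec{c}}$ to show the resulting error is of order $s^2$, hence negligible compared to $s$.

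First I would rewrite $\vec{u}+sV(\vec{u}) = (1-s)\vec{u} + sG(\vec{u}) =: \vec{w}_s$. For $s \in (0,1]$, convexity of $\tilde B$ together with $\vec{u}, G(\vec{u}) \in \tilde B$ gives $\vec{w}_s \in \tilde B$. The issue is only that $\vec{w}_s$ is not in $S_{\vec{c}}$ for $s > 0$, so I need to renormalize. Since $V$ is tangent to $S_{\vec{c}}$ at $\vec{u}$, we have $(V_j(\vec{u}), u_j) = 0$, hence $(G_j(\vec{u}), u_j) = |u_j|^2 = c_j$. Cauchy--Schwarz then yields $|G_j(\vec{u})|^2 \ge c_j$. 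A direct expansion gives
\begin{equation*}
|w_{s,j}|^2 = (1-s)^2 c_j + 2s(1-s)(u_j, G_j(\vec{u})) + s^2 |G_j(\vec{u})|^2 = c_j + s^2\bigl(|G_j(\vec{u})|^2 - c_j\bigr),
\end{equation*}
so $|w_{s,j}|^2 \ge c_j$ for every $s$ and every $j$.

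Next, I would set $\theta_{s,j} := \sqrt{c_j}/|w_{s,j}| \in (0,1]$ and define $\vec{v}_s := (\theta_{s,1}w_{s,1}, \ldots, \theta_{s,m}w_{s,m})$. By construction $\vec{v}_s \in S_{\vec{c}}$; and since $\vec{w}_s \in \tilde B$ with $\theta_{s,j} \in (0,1]$, the cone-type hypothesis \eqref{eqconecondition} gives $\vec{v}_s \in \tilde B$. Therefore $\vec{v}_s \in \tilde B \cap S_{\vec{c}} = B_{\vec{c}}$, which makes it a legitimate competitor for the distance.

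To estimate $\|\vec{w}_s - \vec{v}_s\|$, write $\alpha_j := |G_j(\vec{u})|^2 - c_j \ge 0$ and note that $\theta_{s,j} = (1+ s^2\alpha_j/c_j)^{-1/2}$, so a Taylor expansion yields $1 - \theta_{s,j} = O(s^2)$ uniformly in $j$ (with constants depending on $\vec{u}$). Since $\|w_{s,j}\|$ is bounded for small $s$, we get
\begin{equation*}
\text{dist}(\vec{u}+sV(\vec{u}), B_{\vec{c}}) \le \|\vec{w}_s - \vec{v}_s\| = \Bigl(\sum_{j=1}^m (1-\theta_{s,j})^2 \|w_{s,j}\|^2\Bigr)^{1/2} = O(s^2),
\end{equation*}
which upon dividing by $s$ and letting $s \searrow 0$ yields \eqref{eqd=0atapoint}. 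The only subtle point, and the main obstacle, is verifying $\theta_{s,j} \le 1$ so that the cone condition \eqref{eqconecondition} can be invoked; this is precisely where the tangency of $V(\vec{u})$ to $S_{\vec{c}}$ is indispensable, as it forces $|G_j(\vec{u})| \ge \sqrt{c_j}$ via Cauchy--Schwarz and thus $|w_{s,j}| \ge \sqrt{c_j}$. Everything else is a routine quantitative expansion.
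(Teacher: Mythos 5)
Your proposal is correct and follows essentially the same route as the paper: rewrite $\vec{u}+sV(\vec{u})$ as the convex combination $(1-s)\vec{u}+sG(\vec{u})\in\tilde B$, use tangency of $V$ to compute $|w_{s,j}|^2 = c_j + O(s^2)$, rescale each component into $S_{\vec{c}}$, invoke the cone condition \eqref{eqconecondition}, and note the error is $O(s^2)$. The only cosmetic difference is that you deduce $|w_{s,j}|\ge\sqrt{c_j}$ by first establishing $|G_j(\vec{u})|\ge\sqrt{c_j}$ via Cauchy--Schwarz, whereas the paper's identity $|u_{s,j}|^2=c_j+s^2|V_j(\vec{u})|^2$ makes this immediate (the two formulas are algebraically equivalent since $|V_j(\vec{u})|^2=|G_j(\vec{u})|^2-c_j$ once tangency is used), so your ``main obstacle'' is in fact automatic.
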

	
	\begin{proof}
		Let $\vec{u}_s = \vec{u} + sV(\vec{u}) = (u_{s,1},u_{s,2},\cdots, u_{s,m})$ where $\vec{u} = (u_1,u_2,\cdots,u_m) \in B_{\vec{c}} \subset S_{\vec{c}}$. Since $V(\vec{u}) \in T_{\vec{u}}S_{\vec{c}}$, we have
		\begin{align*}
			|u_{s,j}|^2 & = (u_j+sV_j(\vec{u}),u_j+sV_j(\vec{u})) \\
			& = (u_j,u_j) + 2s(u_j,V_j(\vec{u})) + s^2(V_j(\vec{u}),V_j(\vec{u})) \\
			& = c_j + s^2|V_j(\vec{u})|^2, \quad j = 1,2,\cdots,m.
		\end{align*}
		Note that $\vec{u}_s = sG(\vec{u}) + (1-s)\vec{u} \in \tilde B$ by the convexity of $\tilde B$. Then, since $|u_{s,j}| \geq \sqrt{c_j}$, using \eqref{eqconecondition} we know $$\left( \frac{\sqrt{c_1}}{|u_{s,1}|}u_{s,1}, \frac{\sqrt{c_2}}{|u_{s,2}|}u_{s,2}, \cdots, \frac{\sqrt{c_m}}{|u_{s,m}|}u_{s,m}\right) \in \tilde B \cap S_{\vec{c}} = B_{\vec{c}}.$$
		Hence,
		\begin{align*}
			\lim_{s \searrow 0}s^{-1}dist(\vec{u} + sV(\vec{u}),B_{\vec{c}}) & \leq \lim_{s \searrow 0}s^{-1}\left\|\vec{u}_s -  \left( \frac{\sqrt{c_1}}{|u_{s,1}|}u_{s,1}, \frac{\sqrt{c_2}}{|u_{s,2}|}u_{s,2}, \cdots, \frac{\sqrt{c_m}}{|u_{s,m}|}u_{s,m}\right)\right\|_{E\times E\times \cdots \times E} \\
			& = \lim_{s \searrow 0}\sqrt{\sum_{j=1}^m\left( \frac{s|V_j(\vec{u})|^2}{2c_j}\right) ^2\|u_{s,j}\|^2} = 0,
		\end{align*}
		which completes the proof.
	\end{proof}

	\section{Gradient, pseudogradient, descending flow, and invariant sets} \label{secdescendingflow}
	
	In this section, we will construct a descending flow with gradient field or a pseudogradient field. Using the abstract result established in Section \ref{Sect2}, we prove that $\overline{(\pm \p_i)_\delta} \cap \B_\rho^{M_1}$, as well as $\pm \p_i \cap \B_\rho^{M_1}$, $i = 1,\cdots,m$, are invariant under this descending flow. The descending flow technique and flow invariance arguments are crucial to prove Theorem \ref{thm1.1} and \ref{thm1.2} in the coming sections, precisely, to obtain Palais-Smale sequences at minimax levels constructed in Section \ref{seclink} with location information. The location information will be used to get the sign change properties of solutions.
	
	\vskip0.1in
	
	When $\mu_i > 0$, $\beta_{ij} > 0$, $\forall i \neq j$, we can use the gradient to construct the descending flow; while if $\mu_i < 0$ for some $i$ or $\beta_{ij} <0$ for some $i \neq j$, the gradient flow does not satisfy our requirements due to the sign of the term $\mu_i u_i^3$ or $\beta_{ij} u_j^2u_i$. In the latter case, inspired by \cite{TT}, we aim to find a pseudogradient for $E$ over $S_{\vec{c}}$ for which the sets $\B_\rho^{M_1} \cap \overline{(\pm\PR_i)_\delta}$ are positively invariant for the associated flow. Thanks to Proposition \ref{propbmmanifold}, Corollary \ref{corbm} and Lemma \ref{lemconvex}, we can work on $S_{\vec{c}}$ directly rather than working on its neighborhood which is what was done by the authors in \cite{TT}. More precisely, we are devoted to search for a pseudogradient being of the type $Id - G$, where $Id$ is the identity in $H_0^1(\Omega,\R^m)$ and $G$ is an operator such that $\vec{u} - G(\vec{u}) \in T_{\vec{u}}S_{\vec{c}}$ for $\vec{u} \in S_{\vec{c}}$ and $G(\vec{u}) \in (\pm \p_i)_{\delta/2}$ for all $\vec{u} \in \overline{(\pm\p_i)_\delta} \cap \B_\rho$ with $\delta > 0$ small enough under certain conditions on $\vec{c}$.
	
	\begin{proposition} \label{propuniquewi}
		Assume that \eqref{assumption} holds. Given $\vec{u} \in S_{\vec{c}}$ and $i \in \{1,2,\cdots,m\}$, if $\mu_i > 0$ there exists a unique solution $(w_i,\lambda_{i}) \in H_0^1(\Omega)\times\mathbb{R}$ of the problem
		\begin{align} \label{eqsolutionw}
			\left\{
			\begin{aligned}
				& - \Delta w_i - \sum_{(i,j) \in J^-}\beta_{ij}u_j^2 w_i = \mu_iu_i^3 + \sum_{(i,j) \in J^+}\beta_{ij}u_j^2u_i - \la_iu_i \quad \text{in } \Omega, \\
				& \int_\Omega u_iw_i dx = c_i,
			\end{aligned}
			\right.
		\end{align}
		and if $\mu_i < 0$ there exists a unique solution $(w_i,\lambda_{i}) \in H_0^1(\Omega)\times\mathbb{R}$ of the problem
		\begin{align} \label{eqsolutionw<0}
			\left\{
			\begin{aligned}
				& - \Delta w_i - \mu_iu_i^2 w_i - \sum_{(i,j) \in J^-}\beta_{ij}u_j^2 w_i = \sum_{(i,j) \in J^+}\beta_{ij}u_j^2u_i - \la_iu_i \quad \text{in } \Omega, \\
				& \int_\Omega u_iw_i dx = c_i,
			\end{aligned}
			\right.
		\end{align}
		where $\la_i \in \R$ is an unknown Lagrange multiplier.
	\end{proposition}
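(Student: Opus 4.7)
The plan is to reduce both \eqref{eqsolutionw} and \eqref{eqsolutionw<0} to an application of the Lax--Milgram theorem by observing that, in each case, the left-hand side of the PDE defines a coercive continuous bilinear form on $H_0^1(\Omega)$, and then to exploit linearity in the unknown $\lambda_i$ together with the mass constraint to pin down the Lagrange multiplier.

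First, with $\vec{u}\in S_{\vec{c}}$ fixed and $i$ given, I would introduce
\begin{equation*}
a(v,w) := \int_\Omega \nabla v\cdot\nabla w\,dx + \int_\Omega V_i\,vw\,dx,
\end{equation*}
where $V_i := -\sum_{(i,j)\in J^-}\beta_{ij}u_j^2$ in the case $\mu_i>0$ and $V_i := -\mu_i u_i^2 - \sum_{(i,j)\in J^-}\beta_{ij}u_j^2$ in the case $\mu_i<0$. By the very definitions of $I^-$ and $J^-$, one has $V_i\geq 0$ a.e.\ in $\Omega$. Since $N\in\{3,4\}$ yields $H_0^1(\Omega)\hookrightarrow L^4(\Omega)$, we have $u_j^2\in L^2(\Omega)$, and H\"older's inequality shows that $a$ is continuous on $H_0^1(\Omega)\times H_0^1(\Omega)$; nonnegativity of $V_i$ gives coercivity $a(v,v)\geq \|v\|^2$.

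Next, applying Lax--Milgram twice I would produce the unique $w^{(0)},w^{(1)}\in H_0^1(\Omega)$ satisfying
\begin{equation*}
a(w^{(0)},\varphi) = \int_\Omega g_i\,\varphi\,dx, \qquad a(w^{(1)},\varphi) = -\int_\Omega u_i\varphi\,dx, \qquad \forall\,\varphi\in H_0^1(\Omega),
\end{equation*}
where $g_i$ is the data on the right-hand side of \eqref{eqsolutionw} or \eqref{eqsolutionw<0} with $\lambda_i=0$ (the term $\mu_iu_i^3$ is present in $g_i$ only when $\mu_i>0$, since in the case $\mu_i<0$ it has been absorbed into $V_i$). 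By linearity, for any scalar $\lambda_i\in\R$ the function $w_i:=w^{(0)}+\lambda_iw^{(1)}$ is the unique weak $H_0^1$-solution of the PDE; the mass constraint $\int_\Omega u_iw_i\,dx=c_i$ then reduces to the single scalar equation
\begin{equation*}
\int_\Omega u_iw^{(0)}\,dx + \lambda_i\int_\Omega u_iw^{(1)}\,dx = c_i.
\end{equation*}

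Finally, testing the equation for $w^{(1)}$ with $\varphi=w^{(1)}$ yields $a(w^{(1)},w^{(1)})=-\int_\Omega u_iw^{(1)}\,dx$, and since $u_i\in S_{c_i}$ is nonzero, $w^{(1)}$ cannot vanish identically (else the right-hand side of this identity would be zero yet equal to $-c_i$ after testing against $u_i$ in a dual sense). Coercivity then forces $\int_\Omega u_iw^{(1)}\,dx\leq -\|w^{(1)}\|^2<0$, so the displayed scalar equation has a unique solution $\lambda_i$. The only subtle — though entirely routine — step is verifying that the sign conventions for $\mu_i$ and the $\beta_{ij}$'s are precisely what is needed to make the potential $V_i$ nonnegative; this lets Lax--Milgram apply without any smallness assumption on $\vec{c}$ or on $\vec{u}$, and all remaining steps are elementary linear analysis on $H_0^1(\Omega)$.
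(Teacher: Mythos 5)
Your proposal is correct, and it takes a genuinely different route from the paper. The paper establishes existence by a direct minimization of the convex functional
\[
I(w)=\int_\Omega\Big(\tfrac12|\nabla w|^2-\tfrac12\sum_{(i,j)\in J^-}\beta_{ij}u_j^2 w^2-g_i\,w\Big)\,dx
\]
over the affine constraint $\{w:\int_\Omega u_iw=c_i\}$, which requires some care when $N=4$: there the authors first treat $\vec{u}\in S_{\vec c}\cap C_0^\infty$ and then pass to the limit for general $\vec{u}$, keeping track of the Lagrange multipliers. Uniqueness is then shown separately by subtracting two solutions and testing against the difference. Your approach instead isolates the linear structure entirely: after placing the quadratic potential $V_i\ge 0$ on the left, Lax--Milgram produces $w^{(0)}$ and $w^{(1)}$ with no dimensional restriction (the required estimates only use $H_0^1(\Omega)\hookrightarrow L^4(\Omega)$, which holds continuously for $N\le 4$), and both existence and uniqueness of $(w_i,\la_i)$ drop out simultaneously from solving the one-dimensional linear equation in $\la_i$ whose coefficient $\int_\Omega u_iw^{(1)}\,dx=-a(w^{(1)},w^{(1)})<0$ is sign-definite by coercivity. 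This sidesteps the $N=4$ subtleties in the paper and gives a cleaner argument. One small point of hygiene: your parenthetical justification that $w^{(1)}\not\equiv 0$ is garbled as written (the identity $a(w^{(1)},w^{(1)})=-\int u_iw^{(1)}$ yields $0=0$, not $0=-c_i$, if $w^{(1)}=0$); the clean version is that $w^{(1)}\equiv 0$ would force $\int_\Omega u_i\varphi\,dx=0$ for all $\varphi\in H_0^1(\Omega)$, hence $u_i\equiv 0$ by density, contradicting $\int_\Omega u_i^2\,dx=c_i>0$. Equivalently, if $\int_\Omega u_iw^{(1)}\,dx=0$ then coercivity gives $w^{(1)}\equiv 0$ and the same contradiction follows.
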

	
	\begin{proof}
		We just prove the case of $\mu_i > 0$ since the proof when $\mu_i < 0$ is similar.
		
		\emph{Existence:} Let us consider the following minimization problem
		\begin{align*}
				e := \inf\left\{\int_\Omega\bigg(\frac12|\nabla w|^2 - \frac12 \sum_{(i,j)\in J^-}\beta_{ij}u_j^2w^2-\mu_iu_i^3w - \sum_{(i,j)\in J^+}\beta_{ij}u_j^2u_i w \bigg) dx: w \in H_0^1(\Omega), \int_\Omega u_iw dx = c_i\right\}.
		\end{align*}
		Since
		\begin{align} \label{eqnonterm}
			\int_\Omega u_i^3wdx \leq \left( \int_\Omega u_i^4dx\right)^{\frac34}\left( \int_\Omega w^4dx\right)^{\frac14} \leq C\left(\int_\Omega |\nabla w|^2dx \right)^{\frac12}
		\end{align}
		and
		\begin{align} \label{eqnontermbeta}
			\int_\Omega u_j^2u_iwdx \leq \left( \int_\Omega u_j^4dx\right)^{\frac12}\left( \int_\Omega u_i^4dx\right)^{\frac14}\left( \int_\Omega w^4dx\right)^{\frac14} \leq C\left(\int_\Omega |\nabla w|^2dx \right)^{\frac12},
		\end{align}
		where $C$ is a positive constant and we have used the H\"older inequality and Sobolev inequality in \eqref{eqnonterm} and \eqref{eqnontermbeta}, we get $e > -\infty$.
		
		Take a minimizing sequence $\{w_n\}_n$ such that
		\begin{align*}
			\int_\Omega\bigg(\frac12|\nabla w_n|^2 - \frac12 \sum_{(i,j)\in J^-}\beta_{ij}u_j^2w_n^2-\mu_iu_i^3w_n - \sum_{(i,j)\in J^+}\beta_{ij}u_j^2u_i w_n \bigg) dx \to e \quad \text{and} \quad \int_\Omega u_iw_n dx = c_i.
		\end{align*}
		Using \eqref{eqnonterm} and \eqref{eqnontermbeta} again, we obtain that $\{w_n\}_n$ is bounded in $H_0^1(\Omega)$. Up to a subsequence, we assume that
		\begin{align*}
			& w_n \rightharpoonup \bar w \quad \text{weakly in } H_0^1(\Omega), \\
			& w_n \to \bar w \quad \text{strongly in } L^2(\Omega),
		\end{align*}
		for some $\bar w \in H_0^1(\Omega)$. Therefore $\int_\Omega u_i \bar w dx = c_i$.
		
		When $N = 3$, we further assume that $w_n \to \bar w$ strongly in $L^4(\Omega)$. Then it follows
		\begin{align*}
			e & \leq \int_\Omega\bigg(\frac12|\nabla \bar w|^2 - \frac12 \sum_{(i,j)\in J^-}\beta_{ij}u_j^2 \bar w^2-\mu_iu_i^3 \bar w - \sum_{(i,j)\in J^+}\beta_{ij}u_j^2u_i \bar w \bigg) dx \\
			& \leq \liminf_{n \to \infty}\int_\Omega\bigg(\frac12|\nabla w_n|^2 - \frac12 \sum_{(i,j)\in J^-}\beta_{ij}u_j^2w_n^2-\mu_iu_i^3w_n - \sum_{(i,j)\in J^+}\beta_{ij}u_j^2u_i w_n \bigg) dx = e,
		\end{align*}
		showing that $e$ is attained by $\bar w$. It is standard that $\bar w$ solves \eqref{eqsolutionw} for some $\la_i \in \R$.
		
		When $N = 4$, we further assume that $w_n \to \bar w$ strongly in $L^1(\Omega)$. Then, if $\vec{u} \in S_{\vec{c}} \cap C_0^\infty(\Omega,\R^m)$, by similar arguments to $N = 3$ we obtain that $e$ is attained by a minimizer $w_i$ which solves \eqref{eqsolutionw} for some $\la_i \in \R$. For general $\vec{u} \in S_{\vec{c}}$, we take a sequence $\{\vec{u}_n\}_n \subset S_{\vec{c}} \cap C_0^\infty(\Omega,\R^m)$ such that $\vec{u}_n \to \vec{u}$ strongly in $H_0^1(\Omega,\R^m)$, where we denote $\vec{u}_n = (u_{n,1}, u_{n,2}, \cdots, u_{n,m})$. To avoid confusion, in the rest part of the proof we use notation $e_i(\vec{u})$ instead of $e$. Let $w_{i,n}$ be a minimizer of $e_i(\vec{u}_n)$ and $\la_{i,n}$ be the corresponding Lagrange multiplier. For simplicity, we denote
		\begin{align*}
			I(w,\vec{u}) = \int_\Omega\bigg(\frac12|\nabla w|^2 - \frac12 \sum_{(i,j)\in J^-}\beta_{ij}u_j^2 w^2-\mu_iu_i^3 w - \sum_{(i,j)\in J^+}\beta_{ij}u_j^2u_i w \bigg) dx.
		\end{align*}
		Next, we aim to prove that $\{I(w_{i,n},\vec{u}_n)\}_n$ is a bounded sequence. On the one hand, using \eqref{eqnonterm} and \eqref{eqnontermbeta} we obtain
		\begin{align} \label{eqlowboundofI}
			I(w_{i,n},\vec{u}_n) \geq \frac12\int_\Omega|\nabla w_{i,n}|^2dx - \frac12 \sum_{(i,j)\in J^-}\beta_{ij}u_j^2 w_{i,n}^2 - C\left( \int_\Omega|\nabla w_{i,n}|^2dx\right)^{\frac12}
		\end{align}
		where $C$ is a positive constant independent of $n$, yielding that $\inf_n I(w_{i,n},\vec{u}_n) > -\infty$. On the other hand, noticing $\int_\Omega u_{i,n}^2dx = c_i$ we have
		\begin{align} \label{equpboundofI}
			\limsup_{n \to \infty}e_i(\vec{u}_n) \leq \limsup_{n \to \infty}I(u_{i,n},\vec{u}_n) & = \lim_{n \to \infty}\int_\Omega\bigg(\frac12|\nabla u_{i,n}|^2 - \frac12\sum_{(i,j) \in J^-}\beta_{ij}u_{j,n}^2u_{i,n}^2 - \mu_iu_{i,n}^4 - \sum_{(i,j) \in J^+}\beta_{ij}u_{j,n}^2u_{i,n}^2 \bigg) dx \nonumber \\
			& = \int_\Omega\bigg(\frac12|\nabla u_{i}|^2 - \frac12\sum_{(i,j) \in J^-}\beta_{ij}u_{j}^2u_{i}^2 - \mu_iu_{i}^4 - \sum_{(i,j) \in J^+}\beta_{ij}u_{j}^2u_{i}^2 \bigg) dx,
		\end{align}
		which implies that $\sup_n I(w_{i,n},\vec{u}_n) < \infty$. Thus we prove that $\{I(w_{i,n},\vec{u}_n)\}_n$ is a bounded sequence. Then, using \eqref{eqlowboundofI} we obtain that $\{w_{i,n}\}_n$ is bounded in $H_0^1(\Omega)$ and the boundedness of $\{\la_{i,n}\}_n$ follows. Up to a subsequence, we assume that
		\begin{align*}
				& w_{i,n} \rightharpoonup w_i \quad \text{weakly in } H_0^1(\Omega), \\
				& w_{i,n} \to w_i \quad \text{strongly in } L^2(\Omega),
		\end{align*}
		for some $w_i \in H_0^1(\Omega)$ and that $\la_{i,n} \to \la_i$ for some $\la_i \in \R$. Then it is standard to check that $w_i$ solves \eqref{eqsolutionw} for $\la_i$.

	\vskip0.1in
	\emph{Uniqueness:} Take $(w,\lambda_{1})$ and $(v,\lambda_{2})$ to be solutions of
	\begin{align*}
		- \Delta w - \sum_{(i,j) \in J^-}\beta_{ij}u_j^2 w = \mu_iu_i^3 + \sum_{(i,j) \in J^+}\beta_{ij}u_j^2u_i - \la_1u_i, \quad \int_\Omega u_iw dx = c_i
	\end{align*}
	and
	\begin{align*}
		- \Delta v - \sum_{(i,j) \in J^-}\beta_{ij}u_j^2 v = \mu_iu_i^3 + \sum_{(i,j) \in J^+}\beta_{ij}u_j^2u_i - \la_2u_i, \quad \int_\Omega u_iv dx = c_i.
	\end{align*}
	Subtracting the second equation from the first one, multiplying the result by $w - v$ and integrating by parts yields
	\begin{align*}
		\int_\Omega\bigg( |\nabla (w-v)|^2 - \sum_{(i,j)\in J^-}\beta_{ij}u_j^2(w-v)^2\bigg) dx = (\la_2-\la_1)\int_\Omega u_i(w-v)dx = 0.
	\end{align*}
	Since $\beta_{ij} < 0$ for $(i,j)\in J^-$, we get $\int_\Omega|\nabla (w-v)|^2dx = 0$ and thus $w \equiv v$. Consequently, we have $\lambda_{1}=\lambda_{2}$. The proof is complete.
\end{proof}

We can now define the operator
\begin{align} \label{def:Gbeta<0}
	G: S_{\vec{c}} \to H_0^1(\Omega,\R^m), \quad \vec{u} \mapsto G(\vec{u}) = \vec{w} = (w_1,w_2,\cdots,w_m),
\end{align}
that is, for each $\vec{u}$, $w_i$, the $i^{\rm th}$ component of $G(\vec{u})$, is the unique solution of the system \eqref{eqsolutionw} when $\mu_i > 0$ and of the system \eqref{eqsolutionw<0} when $\mu_i < 0$.

\medskip
Next we state and prove four properties of the operator $G$  defined in \eqref{def:Gbeta<0}.

\begin{lemma} \label{lemmaptotangentspa}
	Assume that \eqref{assumption} holds and let $G$ be defined in \eqref{def:Gbeta<0}. Given $\vec{u} \in S_{\vec{c}}$, we have $\vec{u} - G(\vec{u}) \in T_{\vec{u}}S_{\vec{c}}$.
\end{lemma}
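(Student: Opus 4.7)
The plan is to unpack both definitions and observe that the statement follows immediately from the normalization constraint built into the construction of $G$.

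First I would recall that, by the definition of the tangent space to $S_{\vec{c}}$ at $\vec{u} = (u_1, \ldots, u_m)$ given just before Proposition \ref{propbmmanifold}, a vector $\vec{\phi} = (\phi_1, \ldots, \phi_m) \in H_0^1(\Omega, \mathbb{R}^m)$ lies in $T_{\vec{u}} S_{\vec{c}}$ if and only if $(\phi_i, u_i) = \int_\Omega \phi_i u_i \, dx = 0$ for every $i \in \{1, \ldots, m\}$. Writing $G(\vec{u}) = (w_1, \ldots, w_m)$, I therefore need to verify that
\begin{equation*}
\int_\Omega (u_i - w_i) u_i \, dx = 0, \quad i = 1, \ldots, m.
\end{equation*}

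Next, I would use the two defining constraints: since $\vec{u} \in S_{\vec{c}}$, we have $\int_\Omega u_i^2 \, dx = c_i$; and by Proposition \ref{propuniquewi}, regardless of the sign of $\mu_i$, the component $w_i$ satisfies the side condition $\int_\Omega u_i w_i \, dx = c_i$ (this is the normalization that fixes the Lagrange multiplier $\lambda_i$ uniquely). Subtracting these two equalities gives the desired identity $\int_\Omega u_i (u_i - w_i) \, dx = 0$ for each $i$, which is exactly $\vec{u} - G(\vec{u}) \in T_{\vec{u}} S_{\vec{c}}$.

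There is no real obstacle here: the assertion is a direct consequence of the fact that the mass constraint was precisely what was used in Proposition \ref{propuniquewi} to select the Lagrange multiplier $\lambda_i$. The proof is a one-line computation once the definitions are laid out, and this clean identity is presumably the very reason the side condition $\int_\Omega u_i w_i \, dx = c_i$ was imposed in \eqref{eqsolutionw} and \eqref{eqsolutionw<0} rather than, say, $\int_\Omega w_i^2 \, dx = c_i$; it ensures that the pseudogradient field $V(\vec{u}) = G(\vec{u}) - \vec{u}$ is automatically tangent to the product of spheres $S_{\vec{c}}$, which is the hypothesis needed to invoke Proposition \ref{propbmmanifold} and Corollary \ref{corbm} in the subsequent invariance arguments.
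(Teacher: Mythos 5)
Your proof is correct and follows exactly the paper's argument: unfold the definition of $T_{\vec{u}}S_{\vec{c}}$, then use $\int_\Omega u_i^2\,dx = c_i$ together with the side condition $\int_\Omega u_iw_i\,dx = c_i$ from Proposition \ref{propuniquewi} to conclude $\int_\Omega u_i(u_i-w_i)\,dx = 0$ for each $i$. The additional remark on why the side condition was built into \eqref{eqsolutionw} and \eqref{eqsolutionw<0} is accurate motivation, not part of the argument.
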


\begin{proof}
	The result is clear since
	\begin{align*}
		\int_\Omega u_i(u_i-w_i)dx = \int_\Omega u_i^2dx - \int_\Omega u_iw_idx = c_i -c_i = 0, \quad i = 1,2, \cdots, m.
	\end{align*}
\end{proof}

\begin{lemma} \label{lemofclassC1}
	Assume that \eqref{assumption} holds and let $G$ be defined in \eqref{def:Gbeta<0}. Then $G \in C^1(S_{\vec{c}},H_0^1(\Omega,\R^m))$.
\end{lemma}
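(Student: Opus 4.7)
The plan is to apply the implicit function theorem on a Banach space to extract each component $w_i = w_i(\vec{u})$ and its associated Lagrange multiplier $\la_i = \la_i(\vec{u})$ as $C^1$ functions of $\vec{u}$, and then assemble them into $G$. For each $i\in\{1,\ldots,m\}$, I would define
\[
F_i: H_0^1(\Omega,\R^m) \times H_0^1(\Omega) \times \R \to H^{-1}(\Omega) \times \R
\]
by encoding the defining elliptic equation together with the normalization constraint of Proposition \ref{propuniquewi}: in the case $\mu_i>0$,
\[
F_i(\vec{u},w,\la) := \Big(-\Delta w - \sum_{(i,j)\in J^-}\beta_{ij}u_j^2 w - \mu_i u_i^3 - \sum_{(i,j)\in J^+}\beta_{ij}u_j^2 u_i + \la u_i,\ \int_\Omega u_i w \,dx - c_i\Big),
\]
and analogously (with the extra term $-\mu_i u_i^2 w$) when $\mu_i<0$. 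By Proposition \ref{propuniquewi}, the equation $F_i(\vec{u},w,\la)=0$ is uniquely solved by $(w,\la) = (w_i(\vec{u}),\la_i(\vec{u}))$ for each $\vec{u}\in S_{\vec{c}}$, so the lemma reduces to showing that this solution map is of class $C^1$.

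The first task is to verify $F_i\in C^1$. All nonlinearities are polynomial in the components of $\vec{u}$ and $w$; since $N\in\{3,4\}$, the Sobolev embedding $H_0^1(\Omega)\hookrightarrow L^4(\Omega)$ ensures that any triple product such as $u_j^2 w$, $u_i^3$, or $u_j^2 u_i$ lies in $L^{4/3}(\Omega)\hookrightarrow H^{-1}(\Omega)$ and induces a $C^\infty$ multilinear map. The scalar constraint piece is a bounded bilinear form in $L^2$, hence also $C^\infty$. Thus $F_i$ is actually $C^\infty$, which is more than enough.

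The second and main task is to prove that
\[
D_{(w,\la)} F_i(\vec{u},w_i,\la_i): H_0^1(\Omega)\times\R \to H^{-1}(\Omega)\times\R,\quad (h,\mu)\mapsto \Big(L_i h + \mu u_i,\ \int_\Omega u_i h\,dx\Big),
\]
is a topological isomorphism, where $L_i := -\Delta - \sum_{(i,j)\in J^-}\beta_{ij} u_j^2$ in the focusing case and $L_i := -\Delta - \mu_i u_i^2 - \sum_{(i,j)\in J^-}\beta_{ij} u_j^2$ in the defocusing case. The critical observation is that the coefficients of the potential terms carry favorable signs ($\beta_{ij}<0$ on $J^-$, and $\mu_i<0$ where that term appears), so $L_i$ is coercive and self-adjoint on $H_0^1(\Omega)$, hence an isomorphism onto $H^{-1}(\Omega)$. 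Given data $(g,\alpha)\in H^{-1}(\Omega)\times\R$, the system $L_i h + \mu u_i = g$, $\int_\Omega u_i h\,dx = \alpha$ reduces to the scalar equation
\[
\mu \int_\Omega u_i L_i^{-1} u_i \,dx = \int_\Omega u_i L_i^{-1} g \,dx - \alpha,
\]
uniquely solvable because $\int_\Omega u_i L_i^{-1} u_i \,dx > 0$ (using that $u_i\not\equiv 0$ since $\int_\Omega u_i^2 \,dx = c_i>0$, together with the positivity of $L_i^{-1}$); one then recovers $h = L_i^{-1}(g-\mu u_i)$, and open-mapping boundedness of the inverse follows.

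Having verified both hypotheses, the implicit function theorem yields local $C^1$ solution maps $\vec{u}\mapsto (w_i(\vec{u}),\la_i(\vec{u}))$ around every point of $S_{\vec{c}}$, and by the uniqueness part of Proposition \ref{propuniquewi} these coincide with the maps defining $G$. Assembling the components produces $G\in C^1(S_{\vec{c}},H_0^1(\Omega,\R^m))$. The main obstacle is the invertibility step above: one must exploit the sign structure of $\mu_i$ and $\beta_{ij}$ on $J^-$ to obtain coercivity of $L_i$, and the positivity of $\int_\Omega u_i L_i^{-1}u_i \,dx$ functions as a nondegeneracy condition for the mass constraint. The Sobolev-critical regime $N=4$ requires some care in the mapping properties since the embedding $H_0^1\hookrightarrow L^4$ is only continuous, not compact, but continuity is all that is needed for the bounded-invertibility argument.
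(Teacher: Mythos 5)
Your proposal is correct and follows essentially the same route as the paper: apply the implicit function theorem to the map encoding the equation and the mass constraint, then verify that the linearization in $(w,\la)$ is an isomorphism by exploiting the favorable signs of the negative couplings (so that the operator $L_i$ is coercive) and the nondegeneracy $u_i\not\equiv 0$. The only cosmetic difference is that you formulate the map with values in $H^{-1}(\Omega)\times\R$ and invert $L_i$ abstractly, whereas the paper composes with $(-\Delta)^{-1}$ so the map lands in $H_0^1(\Omega)\times\R$ and then proves bijectivity by explicitly solving two auxiliary linear problems; the two bookkeeping choices are equivalent.
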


\begin{proof}
	We just prove the case of $\mu_i > 0$ since the proof when $\mu_i < 0$ is similar. It suffices to apply the Implicit Function Theorem to the $C^1$ map
	\begin{align*}
		& \Psi : S_{\vec{c}} \times H_0^1(\Omega) \times \R \to H_0^1(\Omega) \times \R, \quad \text{where} \\
		& \Psi(\vec{u},v,\la) = \left(v - (-\Delta)^{-1}\left( \sum_{(i,j)\in J^-}\beta_{ij} u_j^2 v + \mu_i u_i^3 + \sum_{(i,j) \in J^+} \beta_{ij}u_j^2u_i- \la u_i\right), \int_\Omega u_ivdx - c_i \right).
	\end{align*}
	Note that \eqref{eqsolutionw} holds if and only if $\Psi(\vec{u},w_i,\la_i) = (0,0)$. By computing the derivative of $\Psi$ with respect to $v,\la$ at the point $(\vec{u},w_i,\la_i)$ in the direction $(\bar w, \bar \la)$, we obtain a map $\Phi: H_0^1(\Omega) \times \R \to H_0^1(\Omega) \times \R$ given by
	\begin{align*}
		\Phi(\bar w, \bar \la) & := D_{v,\la}\Psi(\vec{u},w_i,\la_i)(\bar w, \bar \la) \\
		& = \left(\bar w - (-\Delta)^{-1}\left( \sum_{(i,j)\in J^-}\beta_{ij}u_j^2 \bar w - \bar \la u_i\right), \int_\Omega u_i\bar wdx \right).
	\end{align*}
	If $\Phi(\bar w, \bar \la) = (0,0)$, then we multiply the equation
	\begin{align*}
		-\Delta \bar w - \sum_{(i,j)\in J^-}\beta_{ij}u_j^2 \bar w = - \bar \la u_i
	\end{align*}
	by $\bar w$ and obtain
	\begin{align*}
		\int_\Omega|\nabla \bar w|^2dx - \sum_{(i,j)\in J^-}\beta_{ij}\int_\Omega u_j^2 \bar w^2dx = - \bar\la\int_\Omega u_i \bar w dx = 0.
	\end{align*}
	So $\bar w \equiv 0$ and then $\bar \la u_i = 0$ in $\Omega$. Since $\int_\Omega u_i^2dx = c_i > 0$, we see that $\bar \la = 0$. Hence $\Phi$ is injective.
	
	On the other hand, for any $(f,c) \in H_0^1(\Omega) \times \R$, let $v_1, v_2 \in H_0^1(\Omega)$ be solutions of the linear problems
	\begin{align*}
		& -\Delta v_1 - \sum_{(i,j)\in J^-}\beta_{ij}u_j^2 v_1 = -\Delta f, \\
		& -\Delta v_2 - \sum_{(i,j)\in J^-}\beta_{ij}u_j^2 v_2 = u_i.
	\end{align*}
	Since $\int_\Omega u_i^2dx = c_i > 0$, so $v_2 \not\equiv 0$ and then $\int_\Omega u_iv_2dx > 0$. Let $\kappa = (c-\int_\Omega u_iv_1dx)/\int_\Omega u_iv_2dx$, then $\Phi(v_1+\kappa v_2,-\kappa) = (f,c)$. Hence $\Phi$ is surjective, that is, $\Phi$ is a bijective map. This completes the proof.
\end{proof}

\begin{lemma} \label{lemcompactoperator}
	Assume that \eqref{assumption} holds and let $G$ be defined in \eqref{def:Gbeta<0}. When $N = 3$, the operator $G$ is compact.
\end{lemma}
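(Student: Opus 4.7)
The plan is to fix a bounded sequence $\{\vec{u}_n\} \subset S_{\vec{c}}$ in $H_0^1(\Omega,\R^m)$, set $\vec{w}_n := G(\vec{u}_n) = (w_{n,1},\ldots,w_{n,m})$ with corresponding Lagrange multipliers $\{\la_{n,i}\}_{i=1}^m$, and extract a subsequence along which $\vec{w}_n$ converges strongly in $H_0^1(\Omega,\R^m)$. First I would establish a priori bounds: testing \eqref{eqsolutionw}/\eqref{eqsolutionw<0} against $w_{n,i}$, the sign structure ($-\beta_{ij} \geq 0$ on $J^-$, and $-\mu_i \geq 0$ when $\mu_i < 0$) makes the left-hand side coercive in $\|w_{n,i}\|^2$, while the right-hand side is controlled by cubic and quartic $L^4$ integrals of $\vec{u}_n$ times $\|w_{n,i}\|$, as in \eqref{eqnonterm}--\eqref{eqnontermbeta}. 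This yields $\sup_n \|w_{n,i}\| < \infty$. Then testing the same equation against $u_{n,i}$ and solving for $\la_{n,i}c_i$ expresses $\la_{n,i}$ as a linear combination of products of bounded quantities, giving $\sup_n|\la_{n,i}| < \infty$.

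Next I would pass to the limit. By reflexivity and the fact that $N = 3$ gives compact embeddings $H_0^1(\Omega) \hookrightarrow L^p(\Omega)$ for every $1 \leq p < 6$, after extracting subsequences I can assume
\[
\vec{u}_n \rightharpoonup \vec{u}, \quad \vec{w}_n \rightharpoonup \vec{w}^{*} \text{ in } H_0^1(\Omega,\R^m), \quad \la_{n,i} \to \la_i^{*} \text{ in } \R,
\]
with the additional strong convergence $\vec{u}_n \to \vec{u}$ and $\vec{w}_n \to \vec{w}^{*}$ in $L^4(\Omega,\R^m)$. In particular $\vec{u} \in S_{\vec{c}}$. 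Every nonlinear term that appears in \eqref{eqsolutionw}/\eqref{eqsolutionw<0}, such as $u_{n,i}^3$, $u_{n,j}^2 u_{n,i}$, and $u_{n,j}^2 w_{n,i}$, converges in the corresponding dual Sobolev norm by Hölder's inequality combined with the $L^4$-strong convergence. Hence $\vec{w}^{*}$ satisfies the limiting system for $\vec{u}$ with multipliers $\la_i^{*}$, and Proposition \ref{propuniquewi} forces $\vec{w}^{*} = G(\vec{u})$.

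Finally, to upgrade weak convergence to strong I would form $z_{n,i} := w_{n,i} - w_i^{*}$, which by subtraction solves
\[
-\Delta z_{n,i} - \sum_{(i,j)\in J^{-}}\beta_{ij} u_{n,j}^2 z_{n,i} = R_{n,i},
\]
where $R_{n,i}$ gathers differences of the form $\mu_i(u_{n,i}^3 - u_i^3)$, $\beta_{ij}(u_{n,j}^2 u_{n,i} - u_j^2 u_i)$, $\beta_{ij}(u_{n,j}^2 - u_j^2)w_i^{*}$, and $\la_{n,i}u_{n,i} - \la_i^{*}u_i$ (with an additional $\mu_i(u_{n,i}^2 - u_i^2)w_i^{*}$ term when $\mu_i < 0$). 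Testing with $z_{n,i}$ and dropping the nonnegative $J^{-}$ contribution on the left gives $\|z_{n,i}\|^2 \leq \int R_{n,i} z_{n,i}\,dx$, and every term in this integral tends to zero because the coefficient factors converge strongly in an appropriate $L^q$ (by $L^4$-compactness and $\la_{n,i} \to \la_i^{*}$) while $z_{n,i} \to 0$ strongly in $L^p$ for $p<6$. The main obstacle is precisely this last step: the argument relies crucially on $N = 3$, since the $L^4$-compact embedding handles all the quartic-type remainders; in dimension $N = 4$ the embedding $H_0^1 \hookrightarrow L^4$ is only continuous and this strong-convergence step would genuinely fail.
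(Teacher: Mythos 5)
Your proposal is correct and follows essentially the same strategy as the paper: establish uniform bounds on $\{w_{n,i}\}$ and $\{\la_{n,i}\}$ from the variational characterization, extract a weakly convergent subsequence that converges strongly in $L^2$ and $L^4$ (using the compact embeddings available when $N=3$), and then test the equation to upgrade weak to strong $H_0^1$ convergence. The paper's version is slightly leaner: it does not bother to identify the weak limit as $G(\vec{u})$ (which is extraneous for compactness) and, rather than subtracting the limiting equation, simply multiplies \eqref{eqsolutionw} by $w_{n,i}-w_i$ and lets every term on the right vanish by strong $L^4$ convergence, which gives $\int_\Omega\nabla w_{n,i}\nabla(w_{n,i}-w_i)\,dx\to0$ and hence strong convergence. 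Your difference-equation formulation buys you the additional fact that $w^* = G(\vec{u})$, and your final step correctly isolates the role of $N = 3$. One small notational wrinkle: in the case $\mu_i < 0$ the term $\mu_i(u_{n,i}^3-u_i^3)$ should be \emph{replaced} by $\mu_i(u_{n,i}^2-u_i^2)w_i^*$ in $R_{n,i}$, not added to it, and the coercive piece $-\mu_i u_{n,i}^2 z_{n,i}$ belongs on the left-hand side alongside the $J^-$ sum; you clearly understand the structure, but the parenthetical as written suggests both terms would appear together.
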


\begin{proof}
	As mentioned before, we just provide the proof of $\mu_i > 0$. Let $\{\vec{u}_n\}_n = \{(u_{n,1},u_{n,2},\cdots,u_{n,m})\}_n$ be a bounded sequence in $H_0^1(\Omega,\R^m)$ such that $\vec{u}_n \rightharpoonup \vec{u}$ in $H_0^1(\Omega,\R^m)$ for some $\vec{u} = (u_1,u_2,\cdots,u_m) \in H_0^1(\Omega,\R^m)$, and let $\vec{w}_n = G(\vec{u}_n) = (w_{n,1},w_{n,2},\cdots,w_{n,m})$. By \eqref{eqlowboundofI}, \eqref{equpboundofI} and similar arguments in the proof of Proposition \ref{propuniquewi} when $N = 4$, we deduce that $\{\vec{w}_n\}_n$ is an $H_0^1(\Omega,\R^m)$-bounded sequence. Let $\la_{n,i}$ be the Lagrange multiplier corresponding to $w_{n,i}$ given in Proposition \ref{propuniquewi}. It is clear that $\la_{n,i}$ is also bounded. Up to a subsequence, there exists $w_i \in H_0^1(\Omega)$ such that $w_{n,i}$ converges to $w_i$ weakly in $H_0^1(\Omega)$ and strongly in $L^2(\Omega)$ and $L^4(\Omega)$. Multiplying \eqref{eqsolutionw} by $w_{n,i} - w_i$ we see that
	\begin{align*}
		\int_\Omega \nabla w_{n,i} \nabla (w_{n,i} - w_i)dx = & \sum_{(i,j) \in J^-}\int_\Omega \beta_{ij}u_{n,j}^2w_{n,i}(w_{n,i} - w_i)dx + \sum_{(i,j) \in J^+}\int_\Omega \beta_{ij}u_{n,j}^2u_{n,i}(w_{n,i} - w_i)dx \\
		& + \int_\Omega\left(\mu_iu_{n,i}^3-\la_{n,i} u_{n,i} \right) (w_{n,i} - w_i)dx \to 0
	\end{align*}
	as $n \to \infty$ and therefore $ w_{n,i} \to w_i$ strongly in $H_0^1(\Omega)$.
\end{proof}

\begin{lemma} \label{lemG(u)beta<0}
	Assume that \eqref{assumption} holds and let $G$ be defined in \eqref{def:Gbeta<0}. Assume that $\rho$ is a fixed positive number, satisfying $2\rho < (\beta_{\max}^+)^{-1}\mathcal{C}_N^2$ if $N = 4$. Then, for any $\vec{c} = (c_1,\cdots,c_m)$ such that $c_j > 0$, $j = 1,\cdots,m$, and $(\mu_i^+ + \beta_{\max}^+)^2\rho^3 \leq \Lambda_1\mathcal{C}_N^{8}c_i$ where $\mu_i^+ := \max\{\mu_i,0\}$, we can take $\hat \delta = \hat \delta(\vec{c},\rho) > 0$ such that when $0 < \delta < \hat \delta$, $G(\vec{u}) \in (\pm\p_i)_{\delta/2}$ for any $\vec{u} \in \overline{(\pm\p_i)_\delta} \cap \B_\rho$, $i = 1,\cdots,m$. Moreover, we have $G(\vec{u}) \in \pm\p_i$ for any $\vec{u} \in \pm\p_i \cap \B_\rho$, $i = 1,\cdots,m$.
\end{lemma}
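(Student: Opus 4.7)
The proof has two stages: first show that the Lagrange multiplier $\lambda_i$ appearing in \eqref{eqsolutionw} (or \eqref{eqsolutionw<0}) satisfies $\lambda_i \leq 0$ under the hypothesis, then exploit this sign via the maximum principle on the positive self-adjoint operator $L$ on the left-hand side of that equation. I treat the case $\mu_i > 0$ and the $+\p_i$ side throughout; the $\mu_i < 0$ case absorbs $-\mu_i u_i^2 \geq 0$ into $L$, and $-\p_i$ is the obvious reflection.

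For the sign of $\lambda_i$, decompose $w_i = \tilde w_i - \lambda_i \tilde v_i$, where $L\tilde w_i = f_i := \mu_i u_i^3 + \sum_{(i,j)\in J^+}\beta_{ij}u_j^2 u_i$ and $L\tilde v_i = u_i$. The constraint $\int u_i w_i\,dx = c_i$ yields $\lambda_i \int u_i\tilde v_i\,dx = \int u_i \tilde w_i\,dx - c_i$, with strictly positive denominator since $L$ is positive self-adjoint. Using the symmetry $\int u_i\tilde w_i\,dx = \int \tilde v_i f_i\,dx$, H\"older and Sobolev, together with the Rayleigh-type estimate $\|\tilde v_i\|^2 \leq \int u_i\tilde v_i\,dx \leq \sqrt{c_i/\Lambda_1}\,\|\tilde v_i\|$ (whence $\|\tilde v_i\| \leq \sqrt{c_i/\Lambda_1}$) and $\|\vec u\|^2 \leq \rho$, I obtain
\[
\bigl|{\textstyle\int u_i \tilde w_i\,dx}\bigr| \leq (\mu_i^+ + \beta_{\max}^+)\mathcal{C}_N^{-4}\rho^{3/2}\sqrt{c_i/\Lambda_1}.
\]
The hypothesis $(\mu_i^+ + \beta_{\max}^+)^2\rho^3 \leq \Lambda_1\mathcal{C}_N^8 c_i$ is precisely the statement that this quantity is $\leq c_i$, giving $\lambda_i \leq 0$. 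Crucially, the argument does not use the sign of $u_i$, so it applies both to the ``moreover'' setup and to the main case.

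With $\lambda_i \leq 0$ in hand, the \emph{moreover} part is immediate: for $u_i \geq 0$ the right-hand side $f_i - \lambda_i u_i = f_i + |\lambda_i| u_i$ of \eqref{eqsolutionw} is nonnegative, so testing against $-w_i^-$ and using $\int \nabla w_i \cdot \nabla w_i^-\,dx = -\|w_i^-\|^2$ together with $w_iw_i^- = -(w_i^-)^2$ forces $\|w_i^-\|^2 \leq 0$, i.e., $w_i \geq 0$. For the main ``close-to-cone'' statement, split $u_i = u_i^+ - u_i^-$ with $\|u_i^-\| \leq \delta$; in the $-w_i^-$ test the $u_i^+$-contributions (paired with $-\lambda_i u_i^+ \geq 0$) remain nonpositive as in the moreover argument, while H\"older, Sobolev and Poincar\'e applied to the $u_i^-$-terms reduce to
\[
\|w_i^-\| \leq \Big[(\mu_i^+ + \beta_{\max}^+)\mathcal{C}_N^{-4}\rho + |\lambda_i|\Lambda_1^{-1}\Big]\,\|u_i^-\|.
\]
A uniform upper bound on $|\lambda_i|$ follows from the $\lambda_i$-formula plus the lower bound $\int u_i\tilde v_i\,dx \geq c_i^2/(\rho K')$ with $K' := 1 + |\beta_{\min}^-|\mathcal{C}_N^{-4}\rho$, obtained from Cauchy--Schwarz applied to the eigenfunction expansion of $u_i$ together with the operator inequality $L \leq K'(-\Delta)$. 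Combining this with the standing restriction \eqref{defi of rho} on $\rho$ (and the additional $N=4$ smallness condition) makes the bracketed coefficient at most $1/2$; then any sufficiently small $\hat\delta(\vec c,\rho)$ yields $\|w_i^-\| \leq \delta/2$.

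The main obstacle is the arithmetic verifying that the bracketed constant stays below $1/2$ after combining the hypothesis $(\mu_i^+ + \beta_{\max}^+)^2\rho^3 \leq \Lambda_1\mathcal{C}_N^8 c_i$, the restriction \eqref{defi of rho}, and the Sobolev critical bound $2\rho < (\beta_{\max}^+)^{-1}\mathcal{C}_N^2$ when $N = 4$; signs must also be tracked carefully when $\mu_i < 0$ modifies both $L$ and $f_i$ in \eqref{eqsolutionw<0}, and the decomposition $w_i = \tilde w_i - \lambda_i \tilde v_i$ adjusts accordingly.
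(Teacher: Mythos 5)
Your first stage, proving $\lambda_i \le 0$, is correct and takes a genuinely different route from the paper: you split $w_i = \tilde w_i - \lambda_i \tilde v_i$, compute $\int u_i\tilde w_i = \int \tilde v_i f_i$ by self-adjointness, and bound via $\|\tilde v_i\| \le \sqrt{c_i/\Lambda_1}$, whereas the paper tests \eqref{eqsolutionw} directly against $w_i$ and uses $\int|\nabla w_i|^2 \ge \Lambda_1 c_i$. Both work, and your version cleanly exposes the precise role of the hypothesis on $\vec c$; the ``moreover'' part is also fine. The gap is in the main estimate. Your bound
\[
\|w_i^-\| \le \Bigl[(\mu_i^+ + \beta_{\max}^+)\mathcal{C}_N^{-4}\rho + |\lambda_i|\Lambda_1^{-1}\Bigr]\,\|u_i^-\|
\]
is correct as an inequality, but the bracket is not $\le 1/2$. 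The problem is $|\lambda_i|\Lambda_1^{-1}$: your own lower bound $\int u_i\tilde v_i \ge c_i^2/(\rho K')$ only gives $|\lambda_i| \lesssim \rho/c_i$, which is $O(1)$ or worse — there is no mechanism (smallness of $\delta$, of $\rho$, or of $\rho/c_i$) that forces it below $1/2$ under the stated hypotheses (and for $N=3$ the first bracketed term also has no size restriction, since \eqref{defi of rho} is \emph{not} part of the lemma's hypotheses). The paper avoids this precisely by using H\"older with the indicator of $\Omega_i^- = \{w_i < 0\}$: $\int u_i^- w_i^- \le K^{1/2}\|u_i^-\|_{L^4}\|w_i^-\|_{L^4}$ with $K = |\Omega_i^-|$, and similarly gaining a factor $K^{1/6}$ in the cross term when $N=3$. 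The essential missing ingredient in your proof is then the claim that $K(\delta) \to 0$ uniformly on $\overline{(\pm\p_i)_\delta}\cap\B_\rho$ as $\delta\to 0^+$, proven in the paper by a compactness argument combined with the strong maximum principle (one extracts a weak limit $\vec u^* \in \p_i\cap\B_\rho$, shows that the corresponding $w_i^*>0$ in $\Omega$, and contradicts a positive lower bound on $|\{w_{n,i}<0\}|$). Without the $K$-factor and this maximum-principle step, you cannot conclude $\|w_i^-\| \le \delta/2$ by taking $\delta$ small, since the coefficient of $\|u_i^-\|$ in your estimate does not shrink with $\delta$.
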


\begin{proof}
	As mentioned before, we just provide the proof of $\mu_i > 0$.
	
	Let $\vec{u} \in \overline{(\p_i)_\delta} \cap \B_\rho$ and $\vec{w} = G(\vec{u}) = (w_1,w_2,\cdots,w_m)$. For any $i \in \{1,2,\cdots,m\}$, let $\la_i$ be the Lagrange multiplier corresponding to $w_i$ given by Proposition \ref{propuniquewi}. From $\int_\Omega u_iw_i dx = c_i$ we deduce that
	\begin{align*}
		c_i \leq \left( \int_\Omega u_i^2dx\right) ^{\frac12}\left( \int_\Omega w_i^2dx\right) ^{\frac12} = \sqrt{c_i}\left( \int_\Omega w_i^2dx\right) ^{\frac12},
	\end{align*}
	that is, $\int_\Omega w_i^2dx \geq c_i$. So $\int_\Omega |\nabla w_i|^2dx \geq \Lambda_1c_i$.
	Then, since $(\mu_i + \beta_{\max}^+)^2\rho^3 \leq \Lambda_1\mathcal{C}_N^{8}c_i$, we have
	\begin{align*}
		\la_i c_i & = \mu_i\int_\Omega u_i^3w_idx - \int_\Omega|\nabla w_i|^2dx + \sum_{(i,j) \in J^-}\beta_{ij}\int_\Omega u_j^2w_i^2dx + \sum_{(i,j) \in J^+}\beta_{ij}\int_\Omega u_j^2u_iw_idx \\
		& \leq \mu_i\left( \int_\Omega u_i^4dx\right)^{\frac34}\left( \int_\Omega w_i^4dx\right)^{\frac14} - \int_\Omega|\nabla w_i|^2dx + \sum_{(i,j) \in J^+}\beta_{ij}\left( \int_\Omega u_j^4dx\right)^{\frac12}\left( \int_\Omega u_i^4dx\right)^{\frac14}\left( \int_\Omega w_i^4dx\right)^{\frac14} \\
		& \le \mu_i\mathcal{C}_N^{-4}\left( \int_\Omega |\nabla u_i|^2dx\right)^{\frac32}\left( \int_\Omega |\nabla w_i|^2dx\right)^{\frac12} - \int_\Omega|\nabla w_i|^2dx \\
		& ~~~~~~~~ + \sum_{(i,j) \in J^+}\beta_{ij}\mathcal{C}_N^{-4}\left( \int_\Omega |\nabla u_j|^2dx\right)\left( \int_\Omega |\nabla u_i|^2dx\right)^{\frac12}\left( \int_\Omega |\nabla w_i|^2dx\right)^{\frac12} \\
		& \le \left( (\mu_i + \beta_{\max}^+)\mathcal{C}_N^{-4}\rho^{\frac32} - \left( \Lambda_1 c_i\right)^{\frac12}\right) \left( \int_\Omega |\nabla w_i|^2dx\right)^{\frac12} \\
		& \le 0.
	\end{align*}
	Multiplying \eqref{eqsolutionw} by $w_i^-$ and integrating by parts yields
	\begin{align*}
		\int_\Omega |\nabla w_i^-|^2dx - \sum_{(i,j) \in J^-}\beta_{ij}\int_\Omega u_j^2(w_i^-)^2dx = \mu_i\int_\Omega u_i^3w_i^-dx + \sum_{(i,j) \in J^+}\beta_{ij} \int_\Omega u_j^2u_iw_i^-dx -\la_i\int_\Omega u_iw_i^-dx,
	\end{align*}
	where $w_i^- := \min\{w_i,0\} \leq 0$. Let $\Omega_i^- = \{x \in \Omega: w_i(x) < 0\}$ and $K = |\Omega_i^-|$. We get
	\begin{align*}
		&~ \text{dist}(\vec{w},\p_i)\|w_i^-\| \\
		\leq & ~ \int_\Omega |\nabla w_i^-|^2dx \leq \int_\Omega |\nabla w_i^-|^2dx - \sum_{(i,j) \in J^-}\beta_{ij}\int_\Omega u_j^2(w_i^-)^2dx \\
		= & ~ \mu_i\int_\Omega u_i^3w_i^-dx + \sum_{(i,j) \in J^+}\beta_{ij} \int_\Omega u_j^2u_iw_i^-dx -\la_i\int_\Omega u_iw_i^-dx \\
		\leq & ~ \mu_i\int_\Omega (u_i^-)^3w_i^-dx + \sum_{(i,j) \in J^+}\beta_{ij} \int_\Omega u_j^2u_i^-w_i^-dx -\la_i\int_\Omega u_i^-w_i^-dx \\
		\leq & ~ \mu_i\left(\int_\Omega (u_i^-)^4dx\right)^{\frac34}\left(\int_\Omega (w_i^-)^4dx\right)^{\frac14} + \beta_{\max}^+ \sum_{(i,j) \in J^+}\left(\int_{\Omega_i^-} u_j^4dx\right)^{\frac12}\left(\int_\Omega (u_i^-)^4dx\right)^{\frac14}\left(\int_\Omega (w_i^-)^4dx\right)^{\frac14} \\
		& ~~~~~~~~~ -\la_i K^{\frac12}\left(\int_\Omega (u_i^-)^4dx\right)^{\frac14}\left(\int_\Omega (w_i^-)^4dx\right)^{\frac14},
	\end{align*}
	since we assume $\mu_i > 0$ and have proved $\la_i \leq 0$, which together with the Sobolev inequality implies that
	\begin{align} \label{eqw-l4leq}
		\text{dist}(\vec{w},\p_i) \leq \mathcal{C}_N^{-1}\left(\mu_i\left(\int_\Omega (u_i^-)^4dx\right)^{\frac12} +\beta_{\max}^+ \sum_{(i,j) \in J^+}\left(\int_{\Omega_i^-} u_j^4dx\right)^{\frac12} -\la_i K^{\frac12}\right)\left(\int_\Omega (u_i^-)^4dx\right)^{\frac14}.
	\end{align}
	Since $\vec{u} \in \B_\rho$ and $G$ is continuous, we know $\vec{w}$ is uniformly bounded in $H_0^1(\Omega,\R^m)$ and then we can check that $|\la_i| < \bar \la$ for some $\bar \la > 0$ independent of $\vec{u} \in \B_\rho$. Next, we prove $K > 0$ is small enough when $\delta > 0$ is sufficiently small independent of $\vec{u}$. Note that
	\begin{align*}
		K \leq \bar K(\delta) = \sup \bigl\{|\{x \in \Omega: w_i(x) < 0\}|: \vec{u} \in \overline{(\p_i)_\delta} \cap \B_\rho,~ \vec{w} = G(\vec{u}) = (w_1, \cdots, w_m)\bigr\}.
	\end{align*}
	It suffices to prove that $\bar K(\delta) \to 0$ as $\delta \to 0^+$. Arguing by contradiction, we suppose that there exists $\delta_n \to 0^+$ such that $\lim_{n \to \infty}\bar K(\delta_n) > 0$. That is, we can take $\vec{u}_n \in \overline{(\p_i)_{\delta_n}} \cap \B_\rho$ such that $\lim_{n \to \infty}|\{x \in \Omega: w_{n,i}(x) < 0\}| > 0$, where $w_{n,i}$ is the $i^{\rm th}$ component of $\vec{w}_n = G(\vec{u}_n)$. Up to a subsequence, we assume that
	\begin{align*}
		\vec{u}_n \to \vec{u}^* = (u_1^*,\cdots,u_m^*) \in \p_i, \quad \text{weakly in } H_0^1(\Omega,\R^m), \text{ and strongly in } L^2(\Omega,\R^m).
	\end{align*}
	Note that $\vec{u}^* \in \B_\rho$ and we have checked that $\la_{\vec{u}^*,i} \le 0$. Moreover, since $\mu_i > 0$, $u_i^* \geq 0$, up to a subsequence we get
	\begin{align*}
		w_{n,i} \to \left( -\Delta - \sum_{(i,j) \in J^-}\beta_{ij}(u_j^*)^2\right) ^{-1}\bigg(\mu_i(u_i^*)^3+\sum_{(i,j) \in J^+}\beta_{ij}(u_j^*)^2u_i^* - \la_{\vec{u}^*,i}u_i^*\bigg) > 0 \quad \text{strongly in } L^2(\Omega,\R^m),
	\end{align*}
	where the last "$ > 0$" follows by the strong maximum principle, and therefore we can conclude that $\lim_{n \to \infty}\{|x \in \Omega: w_{n,i}(x) < 0\}| = 0$ since $\Omega$ is bounded, contradicting $\lim_{n \to \infty}\{|x \in \Omega: w_{n,i}(x) < 0\}| > 0$. This contradiction enables us to take sufficiently small $K > 0$ if $\delta > 0$ is small enough. Observe that
	\begin{align} \label{eqdistwu}
		\left(\int_\Omega |u_i^-|^4dx\right)^{\frac14} & = \inf\bigl\{\|\vec{u} - \vec{v}\|_{L^4(\Omega,\R^m)}: \vec{v} \in \p_i\bigr\} \nonumber \\
		& \leq \mathcal{C}_N^{-1}\inf\bigl\{\|\vec{u} - \vec{v}\|: \vec{v} \in \p_i\bigr\} = \mathcal{C}_N^{-1}\text{dist}(\vec{u},\p_i).
	\end{align}
	When $N = 3$, by H\"older inequality and Sobolev inequality we get
	\begin{align*}
		\sum_{(i,j)\in J^+}\left(\int_{\Omega_i^-}u_j^4dx\right)^{\frac12} \leq K^{\frac16}\sum_{(i,j)\in J^+}\left(\int_{\Omega_i^-}u_j^6dx\right)^{\frac13} \leq K^{\frac16}\mathcal{C}_{N,6}^{-2}\sum_{(i,j)\in J^+}\int_\Omega|\nabla u_j|^2dx < K^{\frac16}\mathcal{C}_{N,6}^{-2} \rho
	\end{align*}
	where $\mathcal{C}_{N,6}$ is the Sobolev best constant of $H_0^1(\Omega)\hookrightarrow L^6(\Omega)$ defined by
	\begin{equation*}
		\mathcal{C}_{N,6} :=\inf_{u \in H_0^1(\Omega) \setminus \{0\} } \cfrac{\left( \int_{\Omega} |\nabla u|^2 dx\right) ^{1/2}}{\left(\int_{\Omega} |u|^6dx\right)^{1/6}}.
	\end{equation*}
	Then, by \eqref{eqw-l4leq} and \eqref{eqdistwu} we have
	\begin{align*}
		\text{dist}(\vec{w},\p_i) \leq \mathcal{C}_N^{-2}\left( \mu_{\max}^+ \mathcal{C}_N^{-2}\delta^2 + K^{\frac16}\mathcal{C}_{N,6}^{-2} \rho \beta_{\max}^+ + \bar \la K^{\frac12} \right)\delta,
	\end{align*}
	and therefore, for small $\delta > 0$, we conclude that $\vec{w} \in (\p_i)_{\delta/2}$. When $N = 4$, since $2\rho < (\beta_{\max}^+)^{-1}\mathcal{C}_N^2$ we obtain
	\begin{align*}
		\beta_{\max}^+\sum_{(i,j)\in J^+}\left(\int_{\Omega_i^-}u_j^4dx\right)^{\frac12} \leq \beta_{\max}^+\mathcal{C}_N^{-2} \sum_{(i,j)\in J^+}\int_\Omega|\nabla u_j|^2dx \leq \beta_{\max}^+\mathcal{C}_N^{-2} \rho < \frac12.
	\end{align*}
	Then, by \eqref{eqw-l4leq} and \eqref{eqdistwu} we have
	\begin{align*}
		\text{dist}(\vec{w},\p) \leq \mathcal{C}_N^{-2}\left( \mu_{\max}^+ \mathcal{C}_N^{-2}\delta^2 + \beta_{\max}^+ \mathcal{C}_N^{-2} \rho + \bar \la K^{\frac12} \right)\delta,
	\end{align*}
	and therefore, for small $\delta > 0$, we conclude that $\vec{w} \in (\p_i)_{\delta/2}$. Moreover, using \eqref{eqw-l4leq} we get that $G(\vec{u}) \in \p_i$ for any $\vec{u} \in \p_i \cap \B_\rho$.
	Similar argument works for the "$-$" sign and we complete the proof.
\end{proof}

Now let us define a map
\begin{align} \label{def:Vbeta<0}
	V : S_{\vec{c}} \to H_0^1(\Omega,\R^m), \quad V(\vec{u}) = \vec{u} - G(\vec{u}).
\end{align}
One can check that $V$ is exactly the gradient of $E$ constrained to $S_{\vec{c}}$ if $\mu_i > 0$ and $\beta_{ij} > 0 ~ \forall i \neq j$. Observe that, for $\vec{u} \in S_{\vec{c}}$,
\begin{align*}
	V(\vec{u}) = 0 \quad \Leftrightarrow \quad \vec{u} \text{ solves } \eqref{eq:mainequation}.
\end{align*}

\begin{lemma}[Palais-Smale condition] \label{lempsconbeta>0}
	Assume that \eqref{assumption} holds and let $V$ be defined in \eqref{def:Vbeta<0}, and let $\{\vec{u}_n\} \subset \B_\rho$ where $\rho$ satisfies \eqref{defi of rho} be such that
	\begin{align*}
		E(\vec{u}_n) \to c < \begin{cases}
			\infty, & \text{ when } N = 3,\\
			\frac{1}{4}( \mu_{\max}^++\beta_{\max}^+)^{-1}\mathcal{C}_N^4, &\text{ when } N=4,
		\end{cases} \quad \text{and} \quad V(\vec{u}_n) \to 0 \quad \text{strongly in } H_0^1(\Omega,\R^m).
	\end{align*}
	Then, up to a subsequence, there exists $\vec{u} \in \overline{	\B_\rho}\subset S_{\vec{c}}$ such that $\vec{u}_n \to \vec{u}$ strongly in $H_0^1(\Omega,\R^m)$ and $V(\vec{u}) = 0$.
\end{lemma}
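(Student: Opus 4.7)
The plan is to follow the standard concentration-compactness strategy for critical Schr\"odinger systems, with the threshold on $c$ coming into play only in the Sobolev-critical case $N=4$.

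First, since $\{\vec u_n\}\subset\B_\rho$ is bounded in $\HR$, up to a subsequence $\vec u_n\rightharpoonup \vec u$ weakly in $\HR$, strongly in $L^2(\Omega,\R^m)$ and a.e.\ in $\Omega$; the $L^2$-masses pass to the limit so $\vec u\in S_{\vec c}$, and weak lower semicontinuity of the Hilbert norm places $\vec u\in\overline{\B_\rho}$, whence Lemma~\ref{lemma 2.1} yields $E(\vec u)\geq 0$. Next, set $\vec w_n=G(\vec u_n)=(w_{n,1},\ldots,w_{n,m})$ with associated Lagrange multipliers $\lambda_{n,i}$. From $V(\vec u_n)=\vec u_n-\vec w_n\to 0$ in $\HR$, the constraint $\int_\Omega u_{n,i}w_{n,i}\,dx=c_i$, and the uniform $L^4$-bound on $\vec u_n$, I obtain boundedness of $\{\lambda_{n,i}\}_n$ by testing \eqref{eqsolutionw} or \eqref{eqsolutionw<0} against $u_{n,i}$; along a further subsequence $\lambda_{n,i}\to\lambda_i^\ast\in\R$. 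Since $w_{n,i}-u_{n,i}\to 0$ in $H_0^1(\Omega)$, equations \eqref{eqsolutionw} and \eqref{eqsolutionw<0} both collapse to
\begin{equation*}
-\Delta u_{n,i}+\lambda_{n,i}u_{n,i}=\mu_i u_{n,i}^3+\sum_{j\neq i}\beta_{ij}u_{n,j}^2 u_{n,i}+o(1)\quad\text{in }H^{-1}(\Omega),
\end{equation*}
whose weak limit shows that $\vec u$ satisfies the same system with multipliers $\lambda_i^\ast$; by the uniqueness in Proposition~\ref{propuniquewi} this reads $G(\vec u)=\vec u$, i.e.\ $V(\vec u)=0$.

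For strong convergence I write $\vec v_n=\vec u_n-\vec u$. A Br\'ezis--Lieb type decomposition (the cross terms all pair a weakly-null factor $v_{n,i}$ or $v_{n,i}^2$ with a factor that converges strongly in the appropriate dual space) gives
\begin{equation*}
\int_\Omega u_{n,i}^4\,dx=\int_\Omega v_{n,i}^4\,dx+\int_\Omega u_i^4\,dx+o(1)
\end{equation*}
together with an analogous splitting for every coupled term $\int_\Omega u_{n,i}^2 u_{n,j}^2\,dx$, hence $E(\vec u_n)=E(\vec u)+E(\vec v_n)+o(1)$. Subtracting the limit equation from the approximate equation for $\vec u_n$, testing against $v_{n,i}$ and summing over $i$, yields
\begin{equation*}
A_n:=\|\vec v_n\|^2=\sum_{i=1}^m\mu_i\int_\Omega v_{n,i}^4\,dx+\sum_{i\neq j}\beta_{ij}\int_\Omega v_{n,i}^2 v_{n,j}^2\,dx+o(1).
\end{equation*}
When $N=3$ the embedding $H_0^1(\Omega)\hookrightarrow L^4(\Omega)$ is compact, the right-hand side is $o(1)$, and $A_n\to 0$, which concludes the proof.

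The main obstacle is the Sobolev-critical case $N=4$, where the quartic terms are not killed by compactness and one must invoke the threshold on $c$. Bounding the right-hand side of the identity above via $\int_\Omega v^4\,dx\leq \mathcal{C}_N^{-4}\bigl(\int_\Omega|\nabla v|^2\,dx\bigr)^2$ by $(\mu_{\max}^++\beta_{\max}^+)\mathcal{C}_N^{-4}A_n^2$ produces the dichotomy $A_n\to 0$ or $A_n\geq (\mu_{\max}^++\beta_{\max}^+)^{-1}\mathcal{C}_N^4+o(1)$. In the second alternative, plugging the identity back into $E(\vec v_n)$ gives $E(\vec v_n)=A_n/2-A_n/4+o(1)=A_n/4+o(1)\geq\tfrac14(\mu_{\max}^++\beta_{\max}^+)^{-1}\mathcal{C}_N^4+o(1)$, while the splitting $E(\vec v_n)=E(\vec u_n)-E(\vec u)+o(1)$ combined with $E(\vec u)\geq 0$ forces $\limsup_n E(\vec v_n)\leq c$, contradicting the assumption $c<\tfrac14(\mu_{\max}^++\beta_{\max}^+)^{-1}\mathcal{C}_N^4$. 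Only the alternative $A_n\to 0$ survives, giving the desired strong convergence $\vec u_n\to\vec u$ in $\HR$.
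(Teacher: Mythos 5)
Your proposal is correct and follows essentially the same route as the paper: bounded Lagrange multipliers and weak limits yield $V(\vec u)=0$ via the limiting PDE, then a Br\'ezis--Lieb splitting of both the energy and the derivative identity, with the Sobolev constant producing the dichotomy for $A_n=\|\vec u_n-\vec u\|^2$ when $N=4$ and the assumed bound on $c$ (together with $E(\vec u)\geq 0$ from Lemma \ref{lemma 2.1}) ruling out the nonzero limit. The only cosmetic differences are that you sum the identity over all components whereas the paper isolates the single non-compact component, and you bound the cross terms directly by $(\mu_{\max}^++\beta_{\max}^+)\mathcal{C}_N^{-4}A_n^2$ while the paper uses $\vec\sigma_n\in\B_{2\rho}$ and then \eqref{defi of rho} to reach the same threshold.
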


\begin{proof}
	Observe that the sequence  $\{\vec{u}_n\}=\{\sbr{ u_{n,1},\ldots, u_{n,m}}\}$ is bounded in $H_0^1(\Omega,\R^m)$ as it is contained in the bounded set $\B_\rho \subset S_{\vec{c}} $. Let $ \lambda_{n,i}$ be the corresponding Lagrange multiplier to $ u_{n,i}$ given in Proposition \ref{propuniquewi}. Then,  it is straightforward to verify that the sequence  $\lbr{ \lambda_{n,i}}$ is bounded in $\R$.   Therefore,  up to subsequence, we may assume that there exist $\vec{u}\in \B_\rho \subset S_{\vec{c}}$  and $ \lambda_i\in \R$ such that
	\begin{equation}\label{m6}
		\begin{aligned}
			& u_{n,i} \rightharpoonup u_i \quad \text{ weakly in } H_0^1(\Omega), \\
			& u_{n,i} \rightharpoonup u_i \quad \text{ weakly in } L^{2^\ast}(\Omega),\\
			& u_{n,i} \to u_i\quad \text{ strongly in } L^r(\Omega) \text{ for } 2 \leq r <2^\ast,\\
			& u_{n,i} \to u_i \quad \text{ almost everywhere in } \Omega.
			\\& \lambda_{n,i} \to \lambda_{i}  \quad \text{   in } \R,
		\end{aligned}
	\end{equation}
	where $2^*=2N/(N-2)$.
	\medbreak
	For any $i\in \lbr{1,\ldots,m}$, let $\tilde{\lambda}_{i}$ be  the  Lagrange multiplier associated with  $u_i$, then we claim that $ \tilde{\lambda}_{i} =\lambda_{i}$. Indeed, for any  $ v\in  H_0^1(\Omega)$ satisfying that $\int_{\Omega}  u_{i}v dx \neq 0 $,   then we can see from the fact $V(\vec{u}_n) \to 0$ that
	\begin{equation}
		\begin{aligned}
			\lambda_{n,i}  \int_{\Omega}  u_{n,i}v dx=-\int_{\Omega}  \nabla u_{n,i}  \cdot \nabla v dx +  \mu_i \int_{\Omega} u_{n,i}^3 vdx + \sum_{j \neq i} \beta_{ij} \int_{\Omega} u_{n,i} u_{n,j}^2 vdx +o_{n}(1).
		\end{aligned}
	\end{equation}
	Letting $n\to \infty$, we deduce from  \eqref{m6} that
	\begin{equation}\label{4.12}
			\begin{aligned}
				\lambda_{i}  \int_{\Omega}  u_{i}v dx=-\int_{\Omega}  \nabla u_{i}  \cdot \nabla v dx +  \mu_i \int_{\Omega} u_{i}^3 vdx +   \sum_{j \neq i} \beta_{ij} \int_{\Omega} u_{i} u_{j}^2 vdx =\tilde{\lambda}_{i}  \int_{\Omega}  u_{i}v dx,
			\end{aligned}
	\end{equation}
	which implies that $ \tilde{\lambda}_{i} =\lambda_{i}$. Therefore, by using \eqref{4.12} and the fact  $V(\vec{u}_n) \to 0$  again,  it is  easy to  verify that $V(\vec{u}) = 0$.
	\medbreak
	It remains to prove that $\vec{u}_n \to \vec{u}$ strongly in $H_0^1(\Omega,\R^m)$.  When $N = 3$, this is Sobolev subcritical case, and it is classical to prove that $\vec{u}_n \to \vec{u}$ strongly in $H_0^1(\Omega,\R^m)$ by using the compact embedding of $H_0^1(\Omega) \hookrightarrow L^2(\Omega)$ and of $H_0^1(\Omega) \hookrightarrow L^4(\Omega)$.
	\medbreak
	Next we consider the Sobolev critical case when $N = 4$. In such case, the embedding  $H_0^1(\Omega) \hookrightarrow L^4(\Omega)$ is not compact, which is much more complicated.   Let $ \vec{\sigma}_{n}=\sbr{   \sigma_{n,1},\ldots, \sigma_{n,m}}=\vec{u}_n  - \vec{u}$. Then we aim to prove $  \vec{\sigma}_{n} \to 0$ strongly in $H_0^1(\Omega,\R^m)$  by  using a contradiction argument. Without loss of generality,  we  may assume that there exists some  $ i\in \lbr{1,\ldots,m}$, such that
	\begin{equation}\label{m5}
		\int_{\Omega} | \nabla \sigma_{n,i}|^2 dx \to l >0, \quad n\to\infty.
	\end{equation}
	Recall that $V(\vec{u}_n) \to 0$ and  $V(\vec{u}) = 0$,  we have
	\begin{equation}
		\int_{\Omega}  |\nabla u_{n,i}|^2    dx+ \lambda_{n,i}  c_i  =  \mu_i \int_{\Omega} u_{n,i}^4  dx + \sum_{j \neq i} \beta_{ij} \int_{\Omega} u_{n,i}^2  u_{n,j}^2 dx +o_{n}(1),
	\end{equation}
	\begin{equation}
		\int_{\Omega}  |\nabla u_{i}|^2    dx+ \lambda_{i}  c_i  =  \mu_i \int_{\Omega} u_{i}^4  dx +  \sum_{j \neq i} \beta_{ij} \int_{\Omega} u_{i}^2  u_{j}^2 dx .
	\end{equation}
	By using the  Br\'ezis-Lieb Lemma (see \cite{BL}  for the case of one component and \cite[p.447]{Zou2015} for  the case of two components), we have
	\begin{equation} \label{m4}
		\int_{\Omega} u_{n,i}^4 dx=\int_{\Omega}\sigma_{n,i}^4dx+	\int_{\Omega} u_{i}^4dx+o_{n}(1),\quad \int_{\Omega}  u_{n,i} ^2 u_{n,j} ^2dx=\int_{\Omega} \sigma_{n,i}^2 \sigma_{n,j} ^2 dx+	\int_{\Omega}  u_{i} ^2 u_{j} ^2dx+o_{n}(1).
	\end{equation}
	Therefore, we obtain
	\begin{equation}
			\begin{aligned}
				\int_{\Omega}  |\nabla \sigma_{n,i}|^2    dx &=\mu_i \int_{\Omega} \sigma_{n,i}^4  dx +  \sum_{j \neq i} \beta_{ij} \int_{\Omega} \sigma_{n,i}^2  \sigma_{n,j}^2 dx+o_{n}(1) \\
				& \leq \mu_i \int_{\Omega} \sigma_{n,i}^4  dx +\beta_{\max}^+ \sbr{\int_{\Omega} \sigma_{n,i}^4dx}^{\frac{1}{2}}\sum_{ j \neq i}		\sbr{\int_{\Omega} \sigma_{n,j}^4dx}^{\frac{1}{2}}+o_{n}(1)\\			
				&\leq  \mu_{\max}^+ \mathcal{C}_N^{-4}\sbr{ \int_{\Omega}  |\nabla \sigma_{n,i}|^2  dx}^2 +\beta_{\max}^+\mathcal{C}_N^{-4}\int_{\Omega}  |\nabla \sigma_{n,i}|^2dx \sum_{j \neq i}	\int_{\Omega}  |\nabla \sigma_{n,j}|^2dx+o_{n}(1).
			\end{aligned}
	\end{equation}
	If $\mu_{\max}^+ = 0$, together with \eqref{defi of rho} we get
	\begin{align*}
		0 < (1-2\beta_{\max}^+ \rho \mathcal{C}_N^{-4})l \le 0,
	\end{align*}
	which is a contradiction.
	When $\mu_{\max}^+ > 0$, observing that $ \|\vec{\sigma}_n\|^2 \le 2\rho$, and by	letting $ n\to \infty$, we have
	\begin{equation}
		l\geq (1-2\beta_{\max}^+ \rho \mathcal{C}_N^{-4}) \sbr{\mu_{\max}^+}^{-1}\mathcal{C}_N^{4} \geq \sbr{\mu_{\max}^++\beta_{\max}^+}^{-1}\mathcal{C}_N^{4},
	\end{equation}
	by the choice of $\rho$, see \eqref{defi of rho}.
	Then we have
	\begin{equation}
		\begin{aligned}
			E(\vec{\sigma}_n)= \frac{1}{4}\sum_{i= 1}^m  \int_{\Omega}  |\nabla \sigma_{n,i}|^2 +o_{n}(1) \geq\frac{1}{4}  \int_{\Omega}  |\nabla \sigma_{n,i}|^2 dx +o_{n}(1) \geq  \frac{1}{4} \sbr{\mu_{\max}^++\beta_{\max}^+}^{-1}\mathcal{C}_N^{4} +o_{n}(1)  .
		\end{aligned}
	\end{equation}
	Moreover, since
	\[\sum_{i=1}^m\int_{\Omega}\lvert\nabla u_{i}\rvert^2dx\leq\liminf_{n \to \infty}\sum_{i=1}^m\int_{\Omega}\lvert\nabla u_{n,i}\rvert^2dx\leq\rho,\]
	we deduce from Lemma \ref{lemma 2.1} that  $E(\vec{u}) \geq M_0\nm{\vec{u}}\geq 0$.  Then  by using \eqref{m4} again, we have
	\begin{equation}
		\begin{aligned}
			E(\vec{u}_n) =E(\vec{u})+  E(\vec{\sigma}_n)+o_n(1) \geq   \frac{1}{4} \sbr{\mu_{\max}^++\beta_{\max}^+}^{-1}\mathcal{C}_N^{4}+o_n(1) .
		\end{aligned}
	\end{equation}
	However, this is impossible since 	$E(\vec{u}_n) \to c < \frac{1}{4} \sbr{\mu_{\max}^++\beta_{\max}^+}^{-1}\mathcal{C}_N^{4}$, and we get a contradiction. Therefore, $  \vec{\sigma}_{n} \to 0$ strongly in $H_0^1(\Omega,\R^m)$  and the proof is completed.
\end{proof}

\vskip 0.15in

By Lemma \ref{lemmaptotangentspa}, we have $V(\vec{u}) \in T_{\vec{u}}S_{\vec{c}}$ for any $\vec{u} \in S_{\vec{c}}$. Next we prove that $V$ is a pseudogradient for $E$ constrained to $S_{\vec{c}}$.

\begin{lemma} \label{pseudo}
	Assume that \eqref{assumption} holds and let $V$ be defined in \eqref{def:Vbeta<0}. We have
	\begin{align*}
		\langle \nabla E|_{S_{\vec{c}}}(\vec{u}),V(\vec{u}) \rangle \geq \|V(\vec{u})\|^2 \quad \text{for any } \vec{u} \in S_{\vec{c}}.
	\end{align*}
\end{lemma}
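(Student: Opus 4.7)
The plan is to compute $\langle \nabla E|_{S_{\vec{c}}}(\vec{u}),V(\vec{u}) \rangle$ componentwise and identify a sum of squared gradient terms plus a nonnegative remainder. A crucial first observation is that, by Lemma \ref{lemmaptotangentspa}, we already have $V(\vec{u}) \in T_{\vec{u}}S_{\vec{c}}$, so the tangential projection is transparent and it suffices to estimate $\langle \nabla E(\vec{u}),V(\vec{u}) \rangle$ in the ambient space $H_0^1(\Omega,\R^m)$. Writing $v_i := u_i - w_i$ and using the mass normalization $\int_\Omega u_i w_i\,dx = c_i = \int_\Omega u_i^2\,dx$, we get the orthogonality identity $\int_\Omega u_i v_i\,dx = 0$, which is the key to killing the Lagrange multiplier terms when we test the equation defining $w_i$ against $v_i$.

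For each $i$ with $\mu_i > 0$, I would test equation \eqref{eqsolutionw} against $\phi = v_i$. This gives an expression for $\mu_i \int u_i^3 v_i + \sum_{(i,j)\in J^+} \beta_{ij}\int u_j^2 u_i v_i$ in terms of $\int \nabla w_i \cdot \nabla v_i$ and $\sum_{(i,j)\in J^-}\beta_{ij}\int u_j^2 w_i v_i$. Plugging this back into the definition of $\langle \nabla_i E(\vec{u}),v_i \rangle$, the cubic term $\mu_i\int u_i^3 v_i$ cancels; combining $\int \nabla u_i \cdot \nabla v_i - \int \nabla w_i \cdot \nabla v_i = \int |\nabla v_i|^2$, and combining $-\sum_{(i,j)\in J^-}\beta_{ij}\int u_j^2 u_i v_i + \sum_{(i,j)\in J^-}\beta_{ij}\int u_j^2 w_i v_i = -\sum_{(i,j)\in J^-}\beta_{ij}\int u_j^2 v_i^2$, I obtain the clean identity
\begin{equation*}
\langle \nabla_i E(\vec{u}),v_i\rangle \;=\; \|v_i\|^2 \;-\; \sum_{(i,j)\in J^-}\beta_{ij}\int_\Omega u_j^2 v_i^2\,dx.
\end{equation*}
Since $\beta_{ij}<0$ for $(i,j)\in J^-$, the remainder is nonnegative, hence $\langle \nabla_i E(\vec{u}),v_i\rangle \geq \|v_i\|^2$.

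For each $i$ with $\mu_i < 0$, the same test with equation \eqref{eqsolutionw<0} produces an additional term $\mu_i\int u_i^2 w_i v_i$ on the left-hand side; this combines with $-\mu_i\int u_i^3 v_i$ in $\langle \nabla_i E(\vec{u}),v_i\rangle$ to give $-\mu_i \int u_i^2 v_i^2$. The analogous computation yields
\begin{equation*}
\langle \nabla_i E(\vec{u}),v_i\rangle \;=\; \|v_i\|^2 \;-\; \mu_i \int_\Omega u_i^2 v_i^2\,dx \;-\; \sum_{(i,j)\in J^-}\beta_{ij}\int_\Omega u_j^2 v_i^2\,dx,
\end{equation*}
where again both correction integrals are nonnegative thanks to the sign assumptions, so $\langle \nabla_i E(\vec{u}),v_i\rangle \geq \|v_i\|^2$. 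Summing the componentwise inequalities over $i = 1,\ldots,m$ and invoking that the tangential projection does not alter the pairing with $V(\vec{u}) \in T_{\vec{u}}S_{\vec{c}}$ concludes the argument.

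There is no serious obstacle here: the main care is simply bookkeeping, namely to correctly split the sum $\sum_{j\neq i}\beta_{ij}\int u_j^2 u_i v_i$ into the $J^+$ and $J^-$ contributions, to use $\int u_i v_i\,dx = 0$ at the right moment to eliminate the Lagrange multiplier, and to observe that in both the $\mu_i>0$ and $\mu_i<0$ cases the "bad" terms end up as integrals of $u_j^2 v_i^2$ (or $u_i^2 v_i^2$) weighted by a nonpositive coefficient, so the inequality goes in the right direction.
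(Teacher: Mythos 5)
Your proof is correct and uses essentially the same mechanism as the paper: test the defining equation for $w_i$ (either \eqref{eqsolutionw} or \eqref{eqsolutionw<0}) against $v_i = u_i - w_i$, use the mass constraint $\int_\Omega u_i v_i\,dx = 0$ to kill the Lagrange multiplier, and observe that the leftover terms come with coefficients $-\mu_i \geq 0$ or $-\beta_{ij} \geq 0$ so the correction is nonnegative. Your write-up is in fact slightly more explicit than the paper's: the paper presents the computation as one inequality chain that visibly swaps $u_i \to w_i$ only in the $J^-$ cross terms (so the displayed chain is literally the $\mu_i>0$ case), whereas you carry out the $\mu_i < 0$ branch separately and derive the exact componentwise identity in both cases, which makes the treatment of the $-\mu_i\int u_i^2 v_i^2$ remainder for $\mu_i<0$ transparent.
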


\begin{proof}
	Let $\vec{u} = (u_1,u_2,\cdots,u_m) \in S_{\vec{c}}$ and $\vec{w} = G(\vec{u}) = (w_1,w_2,\cdots,w_m)$. Note that $\int_\Omega u_i(u_i-w_i)dx = 0$. We have
	\begin{align*}
		& \langle \nabla E|_{S_{\vec{c}}}(\vec{u}),V(\vec{u}) \rangle \\
		= ~ & \sum_{i=1}^m \int_\Omega \bigg(\nabla u_i \nabla (u_i-w_i) - \mu_i u_i^3(u_i-w_i) - \sum_{j\neq i}\beta_{ij}u_j^2u_i(u_i-w_i)\bigg)dx \\
		\geq ~ & \sum_{i=1}^m \int_\Omega \bigg(\nabla u_i \nabla (u_i-w_i) - \mu_i u_i^3(u_i-w_i) - \sum_{(i,j) \in J^+}\beta_{ij}u_j^2u_i(u_i-w_i) - \sum_{(i,j) \in J^-}\beta_{ij}u_j^2w_i(u_i-w_i)\bigg)dx \\
		\ge ~ & \sum_{i=1}^m \int_\Omega |\nabla (u_i-w_i)|^2dx \\
		= ~ & \|V(\vec{u})\|^2.
	\end{align*}
	The proof is completed.
\end{proof}

With $V$ we can now construct a decreasing flow $\eta(t,\cdot): S_{\vec{c}} \to S_{\vec{c}}$ as
\begin{align} \label{eqflowbeta<0}
	\left\{
	\begin{aligned}
		& \frac{\partial}{\partial t}\eta(t,\vec{u}) = -W(\eta(t,\vec{u})), \\
		& \eta(0,\vec{u}) = \vec{u},
	\end{aligned}
	\right.
\end{align}
where 
\begin{align}
	& U_1 := \bigl\{\vec{u} \in S_{\vec{c}}: E(\vec{u}) \geq b + 2\epsilon \text{ or } E(\vec{u}) \le a - 2\epsilon\bigr\}, \\
	& U_2 := \bigl\{\vec{u} \in S_{\vec{c}}: a -\epsilon \le E(\vec{u}) \leq b+ \epsilon\bigr\},\\
	& h(\vec{u}):= \frac{\text{dist}(\vec{u},U_1)}{\text{dist}(\vec{u},U_2)+\text{dist}(\vec{u},U_1)},
\end{align}
for $a \le b$, $\ep > 0$, and
\begin{align*}
	W(\vec{u}) = h(\vec{u})V(\vec{u}).
\end{align*}

\begin{proposition} \label{propconmapofAbeta<0}
	Assume that \eqref{assumption} holds. Let $\rho$ be a fixed positive number satisfying $2\rho < (\beta_{\max}^+)^{-1}\mathcal{C}_N^2$ if $N = 4$, and let $\vec{c} = (c_1,\cdots,c_m)$ such that $c_i > 0$ and $(\mu_i^+ + \beta_{\max}^+)^2\rho^3 \leq \Lambda_1\mathcal{C}_N^{8}c_i$, $i = 1,\cdots,m$. Let $\delta \in (0,\hat \delta)$ where $\hat \delta > 0$ is given in Lemma \ref{lemG(u)beta<0}.
	
	For $a \le b < M_1$ and $0 <2\ep < M_1-b$ such that
	\begin{align} \label{eqVgeepbeta<0}
		\|V(\vec{u})\| \geq \sqrt{2\ep}, \quad \forall \vec{u} \in (E|_{S_{\vec{c}}})^{-1}[a-\ep,b+\ep] \cap S_d^*(\delta) \cap \sbr{\cap_{i=d+1}^m \p_i} \cap \B_\rho^{M_1},
	\end{align}
	where $V(u)$ is the pseudogradient for $E$ constrained to $S_{\vec{c}}$ defined in \eqref{def:Vbeta<0}, the flow $\eta(t,\vec{u})$, defined in \eqref{eqflowbeta<0}, globally exists for all $t \geq 0$ if $\vec{u} \in \B_\rho^{M_1}$ and the following hold:
	\begin{itemize}
		\item[(i)] $\eta(t,\vec{u}) = \vec{u}$ for all $t > 0$ if $\vec{u} \in U_1$;
		\item[(ii)] $E(\eta(\frac{b-a+2\ep}{2\ep},\vec{u})) \leq a -\ep$ or $\eta(\frac{b-a+2\ep}{2\ep},\vec{u}) \in \overline{\PR_\delta^{(d)}}$ if $\vec{u} \in \sbr{\cap_{i=d+1}^m \p_i} \cap \B_\rho$ with $E(\vec{u}) \leq b + \ep$.
		\item[(iii)] $\eta(t,\vec{u}) \in \overline{\PR_\delta^{(d)}} \cap \B_\rho^{M_1}$ for any $t \ge 0$, $\vec{u} \in \overline{\PR_\delta^{(d)}} \cap \B_\rho^{M_1}$.
		\item[(iv)] $\eta(t,\vec{u}) \in \sbr{\cap_{i=d+1}^m \p_i} \cap \B_\rho^{M_1}$ for any $t \ge 0$, $\vec{u} \in \sbr{\cap_{i=d+1}^m \p_i} \cap \B_\rho^{M_1}$.
	\end{itemize}
\end{proposition}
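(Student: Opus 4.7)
The plan is to rewrite the flow direction $-W(\vec{u}) = -h(\vec{u})V(\vec{u})$ in the form $\tilde G(\vec{u}) - \vec{u}$ with $\tilde G(\vec{u}) := (1-h(\vec{u}))\vec{u} + h(\vec{u})G(\vec{u})$, which recasts the flow into the shape required by Lemma \ref{lemconvex} and opens the door to Corollary \ref{corbm} for the invariance statements. First I would establish local existence: since $G \in C^1(S_{\vec{c}},H_0^1(\Omega,\R^m))$ by Lemma \ref{lemofclassC1} and the cutoff $h$ is locally Lipschitz (its denominator is continuous and strictly positive as $U_1 \cap U_2 = \emptyset$), the vector field $-W$ is locally Lipschitz on $S_{\vec{c}}$, producing a unique local flow $\eta(\cdot,\vec{u})$. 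Along it, Lemma \ref{pseudo} together with $h \ge 0$ yields
\begin{align*}
\frac{d}{dt}E(\eta(t,\vec{u})) = -h(\eta)\langle \nabla E|_{S_{\vec{c}}}(\eta),V(\eta)\rangle \leq -h(\eta)\|V(\eta)\|^2 \leq 0,
\end{align*}
so $E$ is non-increasing; combined with Lemma \ref{lemma 2.1}, which forces $E \geq M_1$ on $\partial\B_\rho$, this forbids the flow from exiting $\B_\rho^{M_1}$ once inside, and together with the boundedness of $W$ on $\B_\rho$ upgrades local existence to global existence for every $\vec{u} \in \B_\rho^{M_1}$. Claim (i) is then immediate, since $\vec{u} \in U_1$ gives $h(\vec{u}) = 0$ and $\eta(t,\vec{u}) \equiv \vec{u}$.

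For the invariance (iv), I take $\tilde B := \cap_{i=d+1}^m \PR_i$, which is closed, convex, and satisfies the cone condition \eqref{eqconecondition}. The second conclusion of Lemma \ref{lemG(u)beta<0} supplies $G(\vec{u}) \in \tilde B$ whenever $\vec{u} \in \tilde B \cap \B_\rho$, and convexity of $\tilde B$ then forces $\tilde G(\vec{u}) = (1-h)\vec{u} + hG(\vec{u}) \in \tilde B$ for every $\vec{u} \in \tilde B \cap S_{\vec{c}}$. Lemma \ref{lemconvex} thereby verifies the hypothesis \eqref{eqbm} of Corollary \ref{corbm}, and applying the latter with $B^* = \B_\rho^{M_1}$ (open in $S_{\vec{c}}$ and forward-invariant by the previous step) yields (iv). For (iii) I would run the same scheme separately on each $\overline{(\pm \PR_i)_\delta}$ for $i \in \{1,\ldots,d\}$: each is convex as the closure of a $\delta$-neighborhood of the convex cone $\pm\PR_i$, and satisfies \eqref{eqconecondition} because any near-projection $\vec{p}$ of $\vec{w}$ onto $\pm \PR_i$ scales componentwise into $\pm\PR_i$ with norm at most $\|\vec{w}-\vec{p}\|$. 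The first conclusion of Lemma \ref{lemG(u)beta<0} gives $G(\vec{u}) \in (\pm \PR_i)_{\delta/2} \subset \overline{(\pm \PR_i)_\delta}$, hence $\tilde G(\vec{u}) \in \overline{(\pm \PR_i)_\delta}$ by convexity; Corollary \ref{corbm} then delivers invariance of each $\overline{(\pm \PR_i)_\delta} \cap \B_\rho^{M_1}$, and (iii) follows because any initial datum in $\overline{\PR_\delta^{(d)}} \cap \B_\rho^{M_1}$ lies in at least one such set.

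Finally for (ii) I argue by contradiction, assuming both $E(\eta(T,\vec{u})) > a-\ep$ and $\eta(T,\vec{u}) \notin \overline{\PR_\delta^{(d)}}$, where $T := (b-a+2\ep)/(2\ep)$. Since $E(\vec{u}) \leq b+\ep < M_1$, we have $\vec{u} \in (\cap_{i=d+1}^m \PR_i) \cap \B_\rho^{M_1}$; monotonicity of $E$ together with the contrapositive of (iii) and the forward invariance (iv) then confines the orbit to $S_d^*(\delta) \cap (\cap_{i=d+1}^m \PR_i) \cap \B_\rho^{M_1}$ with $E(\eta(t,\vec{u})) \in (a-\ep,b+\ep]$ for every $t \in [0,T]$, placing the orbit in $U_2$ so that $h(\eta(t,\vec{u})) = 1$. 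Hypothesis \eqref{eqVgeepbeta<0} gives $\|V(\eta(t,\vec{u}))\|^2 \geq 2\ep$ along the critical portion of the orbit, and integrating $\frac{d}{dt}E(\eta) \leq -2\ep$ from $0$ to $T$ produces a net decrease of at least $2\ep T = b-a+2\ep$, forcing $E(\eta(T,\vec{u})) \leq (b+\ep) - (b-a+2\ep) = a-\ep$, the desired contradiction. The main obstacle I anticipate is a careful verification of the cone condition \eqref{eqconecondition} for the closed $\delta$-neighborhoods $\overline{(\pm \PR_i)_\delta}$, together with ensuring that the energy bookkeeping remains compatible with the Sobolev-critical compactness threshold appearing in Lemma \ref{lempsconbeta>0} when $N = 4$; both issues reduce to the quantitative bounds already established in Lemmas \ref{lem>0}--\ref{lemma 2.1} and \ref{lemG(u)beta<0}.
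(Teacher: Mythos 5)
Your proof is correct and follows essentially the same route as the paper: invariance of $\B_\rho^{M_1}$ and boundedness of $W$ give global existence and (i), Corollary \ref{corbm} plus Lemma \ref{lemconvex} (fed by Lemma \ref{lemG(u)beta<0}) give the invariance statements (iii)--(iv), and a deformation estimate gives (ii) by contradiction. Your explicit rewriting of the flow direction as $\tilde G(\vec{u}) - \vec{u}$ with $\tilde G = (1-h)\vec{u} + hG \in \tilde B$ (to make it literally fit the hypothesis of Lemma \ref{lemconvex}), and your observation that the orbit lies in $U_2$ so that $h \equiv 1$ in the energy-decrease integral, both fill in steps the paper leaves implicit and are worth keeping.
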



\begin{proof}
	Note that $\B_\rho^{M_1} \subset \B_\rho$ is a bounded set in $H_0^1(\Omega,\R^m)$. It is standard that $\eta(t,\vec{u})$ locally exists if $\vec{u} \in \B_\rho^{M_1}$ since $W$ is Lipschitz continuous. Fix $\vec{u} \in \B_\rho^{M_1}$. We first prove $\eta(t,\vec{u}) \in \B_\rho^{M_1}$ for all $t \in [0,T_{\vec{u}})$, where $T_{\vec{u}}$ is the maximal time such that $\eta(t,\vec{u})$ exists. In fact, since
	\begin{align*}
		dE(\eta(t,\vec{u})) = \langle \nabla E|_{S_{\vec{c}}}(\eta(t,\vec{u})), \frac{\partial}{\partial t}\eta(t,\vec{u}) \rangle = -h(\eta(t,\vec{u}))\|V(\eta(t,\vec{u}))\|^2 \leq 0,
	\end{align*}
	we have
	\begin{align*}
		E(\eta(t,\vec{u})) \leq E(\vec{u}) < M_1
	\end{align*}
	for $t \in [0,T_{\vec{u}})$. Hence, if $\eta(t_{\vec{u}},\vec{u}) \notin \B_\rho^{M_1}$ for some $0 < t_{\vec{u}} < T_{\vec{u}}$, then $\eta(t_{\vec{u}},\vec{u}) \notin \B_{\rho}$. By continuity, there exists $t_{\vec{u}}' \in (0,t_{\vec{u}}]$ such that $\eta(t_{\vec{u}}',\vec{u}) \in \partial \B_{\rho}$. By Lemma \ref{lemma 2.1}, we have $E(\eta(t_{\vec{u}}',\vec{u})) \geq M_1$, contradicting
	\begin{align*}
		E(\eta(t_{\vec{u}}',\vec{u})) \leq E(\vec{u}) < M_1.
	\end{align*}
	This contradiction shows that $\B_\rho^{M_1}$ is an invariant subset with respect to the flow $\eta$.
	
	To prove $\eta(t,\vec{u})$ globally exists if $\vec{u} \in \B_\rho^{M_1}$, arguing by contradiction we suppose $T_{\vec{u}} < \infty$ for some $\vec{u} \in \B_\rho^{M_1}$. On the one hand, it is necessary that
	\begin{align*}
		\limsup_{t \to T_{\vec{u}}}\|W(\eta(t,\vec{u}))\| \to \infty.
	\end{align*}
	On the other hand, since $\eta(t,\vec{u}) \in \B_\rho^{M_1} \subset \B_{\rho}$ for all $0 < t < T_{\vec{u}}$, $\eta(t,\vec{u})$ is uniformly bounded in $H_0^1(\Omega,\R^m)$ and so $\|W(\eta(t,\vec{u}))\|$ is uniformly bounded with respect to $t$. This is a contradiction.
	
	In the following, observe that $W(\vec{u}) = 0$ if $\vec{u} \in U_1$. Then (i) follows. Next we claim that $\eta(t,\vec{u}) \in \overline{(\pm\PR_i)_\delta} \cap \B_\rho^{M_1}$ for any $t \ge 0$ and $\vec{u} \in \overline{(\pm\PR_i)_\delta} \cap \B_\rho^{M_1}$. Let $\vec{u} \in \overline{(\pm\PR_i)_\delta} \cap \B_\rho^{M_1}$. We have proved that $\eta(t,\vec{u}) \in \B_\rho^{M_1}$ for all $t \ge 0$ and it suffices to show  $\eta(t,\vec{u}) \in \overline{(\pm\PR_i)_\delta}$ for $t \ge 0$. We use Corollary \ref{corbm} with $B_{\vec{c}} = \overline{(\pm\PR_i)_\delta} \cap S_{\vec{c}}$ and $B^* = \B_\rho^{M_1}$. The fact that \eqref{eqbm} is satisfied is a consequence of Lemma \ref{lemconvex} used with $\tilde{B} = \overline{(\pm\PR_i)_\delta}$. Indeed, we know from Lemma \ref{lemG(u)beta<0} that $G(\vec{u}) \in (\pm\p_i)_\delta$ if $\vec{u} \in \overline{(\pm\PR_i)_\delta} \cap B_\rho^{M_1}$ and thus \eqref{eqd=0atapoint}, or equivalently \eqref{eqbm}, holds for any $\vec{u} \in \overline{(\pm\PR_i)_\delta} \cap \B_\rho^{M_1} = B_{\vec{c}} \cap \B_\rho^{M_1}.$  Applying Corollary \ref{corbm} we obtain that  $\eta(t,\vec{u}) \in \overline{(\pm\PR_i)_\delta} \cap \B_\rho^{M_1}$ for all $0 < t < T_{\vec{u}}$. Moreover, we have shown that $T_{\vec{u}} = \infty$, which completes the proof of the claim. Then (iii) follows. Similarly, we can prove $\eta(t,\vec{u}) \in \pm\PR_i \cap \B_\rho^{M_1}$ for any $t \ge 0$ and $\vec{u} \in \pm\PR_i \cap \B_\rho^{M_1}$ and then validate (iv).
	
	Finally, we prove (ii). Let $\vec{u} \in \sbr{\cap_{i=d+1}^m \p_i} \cap \B_\rho$ with $E(\vec{u}) \leq b + \ep$. Note that $\vec{u} \in \B_\rho^{M_1}$ since $b + \ep < M_1$. Thus, if $\eta(t,\vec{u}) \in \overline{\PR^{(d)}_\delta}$ for some $t \in [0,\frac{b-a+2\ep}{2\ep}]$, we have $\eta(\frac{b-a+2\ep}{2\ep},\vec{u}) \in \overline{\PR^{(d)}_\delta}$. Moreover, if $E(\eta(t,\vec{u})) \leq a -\ep$ for some $t \in [0,\frac{b-a+2\ep}{2\ep}]$, then
	\begin{align*}
		E(\eta(\frac{b-a+2\ep}{2\ep},\vec{u})) \leq E(\eta(t,\vec{u})) \leq a -\ep.
	\end{align*}
	So, to arrive at the desired conclusion, it is sufficient to find a contradiction if we assume that $E(\eta(t,\vec{u})) > a - \ep$ and $\eta(t,\vec{u}) \in S_d^*(\delta)$ for all $t \in [0,\frac{b-a+2\ep}{2\ep}]$. In this case, we get
	\begin{align*}
		\eta(t,\vec{u}) \in (E|_{S_{\vec{c}}})^{-1}[a-\ep,b+\ep] \cap S_d^*(\delta) \cap \sbr{\cap_{i=d+1}^m \p_i} \cap \B_\rho^{M_1}, \quad \forall t\in[0,\frac{b-a+2\ep}{2\ep}],
	\end{align*}
	and by \eqref{eqVgeepbeta<0} we obtain
	\begin{align*}
		\|V(\eta(t,\vec{u}))\| \geq \sqrt{2\ep}, \quad \forall t\in[0,\frac{b-a+2\ep}{2\ep}].
	\end{align*}
	Then, we have
	\begin{align*}
		E(\eta(\frac{b-a+2\ep}{2\ep},\vec{u})) & = E(\vec{u}) + \int_0^{\frac{b-a+2\ep}{2\ep}}dE(\eta(t,\vec{u})) \\
		& = E(\vec{u}) + \int_0^{\frac{b-a+2\ep}{2\ep}}\langle \nabla E|_{S_{\vec{c}}}(\eta(t,\vec{u})), \frac{\partial}{\partial t}\eta(t,\vec{u}) \rangle dt \\
		& \leq b + \ep - (b-a+2\ep) = a - \ep
	\end{align*}
	and find a contradiction with $E(\eta(\frac{b-a+2\ep}{2\ep},\vec{u})) > a - \ep$, which completes the proof.
\end{proof}

\begin{remark}
	{\rm Proposition \ref{propconmapofAbeta<0} includes the case of $d = m$, in which case (ii) reads as
	\begin{itemize}
		\item $E(\eta(\frac{b-a+2\ep}{2\ep},\vec{u})) \leq a -\ep$ or $\eta(\frac{b-a+2\ep}{2\ep},\vec{u}) \in \overline{\PR_\delta}$ if $\vec{u} \in \B_\rho$ with $E(\vec{u}) \leq b + \ep$.
	\end{itemize}}
\end{remark}

\section{Sign-changing normalized  solutions} \label{secsign-changing}

We complete the proof of Theorem \ref{thm1.1} in this section. To do it, we first prove that the minimax value $m_{\vec{c},\rho,k}^\delta$, constructed in Section \ref{seclink}, is indeed a critical value.

\begin{theorem} \label{thmbeta>0}
	Assume that \eqref{assumption} holds. Let $\rho$ be a fixed positive number satisfying \eqref{defi of rho} and further satisfying $2\rho < (\beta_{\max}^+)^{-1}\mathcal{C}_N^2$ if $N = 4$, and let $\vec{c} = (c_1,\cdots,c_m)$ such that \eqref{eqci1}, \eqref{eqci2} hold true, $c_i > 0$ and $(\mu_i^+ + \beta_{\max}^+)^2\rho^3 \leq \Lambda_1\mathcal{C}_N^{8}c_i$, $i = 1,\cdots,m$. Let $\delta \in (0,\min\{\delta_0,\hat \delta\})$ where $\delta_0, \hat \delta > 0$ are given in Lemma \ref{lem>0} and Lemma \ref{lemG(u)beta<0} respectively. Then, \eqref{eq:mainequation}  has a sign-changing  normalized  solution at the level $m_{\vec{c},\rho,k}^\delta$.
\end{theorem}

\begin{proof}
	Observe that the value $m_{\vec{c},\rho,k}^\delta$ is well defined and finite by Proposition \ref{propwelldefined}.
	First, we apply Proposition \ref{propconmapofAbeta<0} to find a Palais-Smale sequence in $\B_\rho^{M_1} \cap S^*(\delta)$ at the level $m_{\vec{c},\rho,k}^\delta$, that is, a sequence $\{\vec{u}_n\} \subset \B_\rho^{M_1} \cap S^*(\delta)$ satisfying
	\begin{align*}
		E(\vec{u}_n) \to m_{\vec{c},\rho,k}^\delta \quad \text{and} \quad V(\vec{u}_n) \to 0 \text{ strongly in } H_0^1(\Omega,\R^m), \quad \text{as } n \to \infty.
	\end{align*}
	With $a = b = m_{\vec{c},\rho,k}^\delta$ and $\ep > 0$ sufficiently small, we claim the existence of $\vec{u}_\ep \in (E|_{S_{\vec{c}}})^{-1}[b-\ep,b+\ep] \cap S^*(\delta) \cap \B_\rho^{M_1}$ such that $\|V(\vec{u}_\ep)\| < \sqrt{2\ep}$. By contradiction, \eqref{eqVgeepbeta<0} holds true, and by Proposition \ref{propconmapofAbeta<0}, the flow $\eta(t,\vec{u})$, defined in \eqref{eqflowbeta<0}, globally exists for all $t \geq 0$ if $\vec{u} \in \B_\rho^{M_1}$ and the following hold:
	\begin{itemize}
		\item[(i)] $\eta(t,\vec{u}) = \vec{u}$ for all $t > 0$ if $\vec{u} \in U_1$;
		\item[(ii)] $E(\eta(1,\vec{u})) \leq b -\ep$ or $\eta(1,\vec{u}) \in \overline{\PR_\delta}$ if $\vec{u} \in \B_\rho$ with $E(\vec{u}) \leq b + \ep$.
		\item[(iii)] $\eta(t,\vec{u}) \in \overline{\PR_\delta} \cap \B_\rho^{M_1}$ for any $t \ge 0$, $\vec{u} \in \overline{\PR_\delta} \cap \B_\rho^{M_1}$.
	\end{itemize}
	According to the definition of $m_{\vec{c},\rho,k}^\delta$, we can take $A \in \mathcal{L}_{\rho,k}$ such that $\sup_{\vec{u} \in A\cap S^*(\delta)}E(\vec{u}) \le m_{\vec{c},\rho,k}^\delta + \ep$. Note that $A \subset \B_\rho^{M_1}$ by the definition of $\mathcal{L}_{\rho,k}$. Then, either $E(\vec{u}) \leq m_{\vec{c},\rho,k}^\delta - \ep$ or $\vec{u} \in \overline{\PR_\delta}$ for all $\vec{u} \in \eta(1,A)$. We will prove $\eta(1,A) \in \mathcal{L}_{\rho,k}$, then
	\[\sup_{\vec{u}\in\eta(1,A)\cap S^*(\delta)}E(\vec{u}) \le m_{\vec{c},\rho,k}^{\delta}-\varepsilon < m_{\vec{c},\rho,k}^{\delta},\]
	which contradicts the definition of $m_{\vec{c},\rho,k}^\delta$ and completes the proof of our claim. Indeed, $\eta(1,A) \subset \B_\rho^{M_1}$, and it needs to show that $\eta(1,A)$ is linked to $\mathbb{S}_{k}^\bot$. By Proposition \ref{propwelldefined}, we have
	\begin{align*}
		m_{\vec{c},\rho,k}^\delta > \sup_{\partial \mathbb{M}_{k+1}}E(\vec{u}).
	\end{align*}
	With $\ep > 0$ small enough we get $m_{\vec{c},\rho,k}^\delta - 2\ep > \sup_{\partial \mathbb{M}_{k+1}}E(\vec{u}),$ that is, $ \partial \mathbb{M}_{k+1} \subset U_1$. By property (i), we conclude that $\eta(1,\cdot)|_{\partial \mathbb{M}_{k+1}} = \textbf{id}$ and $\partial \mathbb{M}_{k+1} \subset \eta(1,A)$. Moreover, for any $h \in C(\eta(1,A),S_{\vec{c}})$, $h(\eta(1,\cdot)) \in C(A,S_{\vec{c}})$ and so $h(\eta(1,A)) \cap \mathbb{S}_k^{\bot} \neq \emptyset$.
	
	Now, with $\ep_n \to 0^+$ we obtain a sequence $\{\vec u_n\} = \{\vec u_{\ep_n}\} \subset S^*(\delta) \cap \B_\rho^{M_1}$ such that
	\begin{align*}
		m_{\vec{c},\rho,k}^\delta - \ep_n \le E(\vec u_n) \le m_{\vec{c},\rho,k}^\delta + \ep_n \quad \text{and} \quad \|V(\vec u_n)\| < \sqrt{2\ep_n},
	\end{align*}
	succeeding to find a Palais-Smale sequence of $E$ constrained to $S_{\vec c}$ in $\B_\rho^{M_1} \cap S^*(\delta)$ at the level $m_{\vec{c},\rho,k}^\delta$.
	
	By Lemma \ref{lemma 2.1} and \eqref{defi of rho} we have
	\begin{align*}
		M_1 < \frac12 \rho < \frac{1}{4}( \mu_{\max}^++\beta_{\max}^+)^{-1}\mathcal{C}_N^4.
	\end{align*}
	Then, using Lemma \ref{lempsconbeta>0}, up to a subsequence, there exists $\vec{u} \in S_{\vec{c}}$ such that $\vec{u}_n \to \vec{u}$ strongly in $H_0^1(\Omega,\R^m)$ and $V(\vec{u}) = 0$. Clearly, $E(\vec{u}) = m_{\vec{c},\rho,k}^\delta$. Moreover, by $\vec{u} \in \overline{S^*(\delta)}$ it follows that $\vec{u}$ is sign-changing. Finally, by Lagrange multiplier principle, $\vec{u}$ is a solution of \eqref{eq:mainequation} with $\la_i, i =1,\cdots,m,$ being Lagrange multipliers.
\end{proof}



Now we can complete the proof of Theorem \ref{thm1.1}.

\begin{proof}[Proof of Theorem \ref{thm1.1}]
	Let $j$ be a positive integer. Since $\Lambda_{k} \to \infty$ as $k \to \infty$, we can take $0 < \Lambda_1 = \Lambda_{k_1} < \Lambda_{k_2} < \Lambda_{k_3} < \cdots \Lambda_{k_{j}}$ such that $\Lambda_{k_l} < \Lambda_{k_l + 1}$ for $l = 1,2,\cdots,j$.
	\medbreak
	By Remark \ref{rmktildeck}, there exists $\tilde c_j > 0$ such that for $\vec{c} = (c_1,\cdots,c_m)$ with $c_i > 0$, $i = 1,\cdots,m$, and $\frac{\max_{1 \le i \le m} c_i^2}{\min_{1 \le i \le m} c_i} < \tilde c_j$, we can always take $\rho > 0$ such that \eqref{defi of rho}, \eqref{eqci1} and \eqref{eqci2} hold true for $k = k_1,\cdots,k_j$. Reasoning as Remark \ref{rmktildeck}, we can further assume that $2\rho < (\beta_{\max}^+)^{-1}\mathcal{C}_N^2$ if $N = 4$ and $(\mu_i^+ + \beta_{\max}^+)^2\rho^3 \leq \Lambda_1\mathcal{C}_N^{8}c_i$, $i = 1,2,\cdots,m$, when $N \leq 4$. Then, by Theorem \ref{thmbeta>0}, for sufficiently small $\delta > 0$, \eqref{eq:mainequation}  has sign-changing normalized   solutions at levels $m_{\vec{c},\rho,k_1}^\delta, m_{\vec{c},\rho,k_2}^\delta, \cdots, m_{\vec{c},\rho,k_j}^\delta$. Next, we will prove that $m_{\vec{c},\rho,k_1}^\delta < m_{\vec{c},\rho,k_2}^\delta < \cdots < m_{\vec{c},\rho,k_j}^\delta$, which enables us to arrive at the desired conclusion. On the one hand, by \eqref{eqvaluesepe} we have
	\begin{align} \label{eqm>}
		m_{\vec{c},\rho,k_l}^\delta \geq \inf_{\B_\rho   ^{M_1}\cap\mathbb{S}_{k_l}^\bot}E(\vec{u}) > \sup_{\partial \mathbb{M}_{k_l+1}}E(\vec{u}) \geq \sup_{\mathbb{S}_{k_l}}E(\vec{u}).
	\end{align}
	On the other hand, by Lemma \ref{lemnemp} and the definition of $m_{\vec{c},\rho,k_l}^\delta$ we get
	\begin{align} \label{eqm<}
		m_{\vec{c},\rho,k_l}^\delta \leq \sup_{\mathbb{M}_{k_l+1}}E(\vec{u}) = \sup_{\mathbb{S}_{k_l+1}}E(\vec{u}).
	\end{align}
	Combining \eqref{eqm>} and \eqref{eqm<} we conclude that
	\begin{align*}
		& \ m_{\vec{c},\rho,k_1}^\delta \leq \sup_{\mathbb{S}_{k_1+1}}E(\vec{u}) \leq \sup_{\mathbb{S}_{k_2}}E(\vec{u}) \\
		< \ & m_{\vec{c},\rho,k_2}^\delta \leq \sup_{\mathbb{S}_{k_2+1}}E(\vec{u}) \leq \sup_{\mathbb{S}_{k_3}}E(\vec{u}) \\
		< \ & \cdots \\
		< \ &  m_{\vec{c},\rho,k_{j-1}}^\delta \leq \sup_{\mathbb{S}_{k_{j-1}+1}}E(\vec{u}) \leq \sup_{\mathbb{S}_{k_j}}E(\vec{u}) \\
		< \ & m_{\vec{c},\rho,k_j}^\delta.
	\end{align*}
	The proof is complete.
\end{proof}

\section{Semi-nodal  normalized solutions} \label{secsemi-nodal}

In this section, we prove Theorem \ref{thm1.2}. For convenience, we introduce the following terminology.  For a given integer  $d$ with $1 \le d \le m-1$, a solution $\vec{u}$ is called $(d,m-d)$-semi-nodal if the components $u_1,\cdots,u_d$ change sign and the components $u_{d+1},\cdots,u_m$ are positive.  
Following the strategies in Section \ref{secsign-changing}, we first prove that the minimax value $m_{\vec{c},\rho,k,d}^\delta$, constructed in Section \ref{seclink}, is indeed a critical value.

\begin{theorem} \label{thmbeta>0seminodal}
	Assume that \eqref{assumption} holds. Let $\rho$ be a fixed positive number satisfying \eqref{defi of rho} and further satisfying $2\rho < (\beta_{\max}^+)^{-1}\mathcal{C}_N^2$ if $N = 4$, and let $\vec{c} = (c_1,\cdots,c_m)$ such that \eqref{eqci5.1}, \eqref{eqci5.2} hold true, $c_i > 0$ and $(\mu_i^+ + \beta_{\max}^+)^2\rho^3 \leq \Lambda_1\mathcal{C}_N^{8}c_i$, $i = 1,\cdots,m$. Let $\delta \in (0,\min\{\delta_{0,d},\hat \delta\})$ where $\delta_{0,d}, \hat \delta > 0$ are given in Lemma \ref{lem>0 semi-nodal} and Lemma \ref{lemG(u)beta<0} respectively. Then, \eqref{eq:mainequation}  has a $(d,m-d)$-semi-nodal  normalized solution at the level $m_{\vec{c},\rho,k,d}^\delta$.
\end{theorem}

\begin{proof}
	The value $m_{\vec{c},\rho,k,d}^\delta$ is well defined and finite by Proposition \ref{propwelldefined semi-nodal}.
	We apply Proposition \ref{propconmapofAbeta<0} to find a Palais-Smale sequence $\{\vec{u}_{n}\}$ in $\sbr{\cap_{i=d+1}^m \p_i}\cap\B_\rho^{M_1} \cap S_{d}^*(\delta)$ at the level $m_{\vec{c},\rho,k,d}^\delta$, that is
	\begin{align*}
		E(\vec{u}_n) \to m_{\vec{c},\rho,k,d}^\delta \quad \text{and} \quad V(\vec{u}_n) \to 0 \text{ strongly in } H_0^1(\Omega,\R^m), \quad \text{as } n \to \infty.
	\end{align*}
	With $a = b = m_{\vec{c},\rho,k,d}^\delta$ and $\ep > 0$ sufficiently small, we claim the existence of $\vec{u}_\ep \in (E|_{S_{\vec{c}}})^{-1}[b-\ep,b+\ep] \cap S_{d}^*(\delta) \cap \sbr{\cap_{i = d+1}^m \p_{i}} \cap \B_\rho^{M_1}$ such that $\|V(\vec{u}_\ep)\| < \sqrt{2\ep}$. By contradiction, \eqref{eqVgeepbeta<0} holds true, and by Proposition \ref{propconmapofAbeta<0}, the flow $\eta(t,\vec{u})$, defined in \eqref{eqflowbeta<0}, globally exists for all $t \geq 0$ if $\vec{u} \in \B_\rho^{M_1}$ and the following hold:
	\begin{itemize}
		\item[(i)] $\eta(t,\vec{u}) = \vec{u}$ for all $t > 0$ if $\vec{u} \in U_1$;
		\item[(ii)] $E(\eta(1,\vec{u})) \leq b -\ep$ or $\eta(1,\vec{u}) \in\overline{\PR_\delta^{(d)}}$ if $\vec{u} \in \sbr{\cap_{i=d+1}^m \p_i} \cap \B_\rho$ with $E(\vec{u}) \leq b + \ep$.
		\item[(iii)] $\eta(t,\vec{u}) \in \overline{\PR_\delta^{(d)}} \cap \B_\rho^{M_1}$ for any $t \ge 0$, $\vec{u} \in \overline{\PR_\delta^{(d)}} \cap \B_\rho^{M_1}$.
		\item[(iv)] $\eta(t,\vec{u}) \in \sbr{\cap_{i=d+1}^m \p_i} \cap \B_\rho^{M_1}$ for any $t \ge 0$, $\vec{u} \in \sbr{\cap_{i=d+1}^m \p_i} \cap \B_\rho^{M_1}$.
	\end{itemize}
	According to the definition of $m_{\vec{c},\rho,k,d}^\delta$, we can take $A \in \mathcal{L}_{\rho,k,d}$ such that $\sup_{\vec{u} \in A\cap S_{d}^*(\delta)}E(\vec{u}) \le m_{\vec{c},\rho,k,d}^\delta + \ep$. Note that $A \subset \sbr{\cap_{i=d+1}^m \p_i} \cap \B_\rho^{M_1}$ by the definition of $\mathcal{L}_{\rho,k,d}$. Then, either $E(\vec{u}) \leq m_{\vec{c},\rho,k,d}^\delta - \ep$ or $\vec{u} \in \overline{\PR_\delta^{(d)}}$ for all $\vec{u} \in \eta(1,A)$. We prove $\eta(1,A) \in \mathcal{L}_{\rho,k,d}$, which contradicts the definition of $m_{\vec{c},\rho,k,d}^\delta$ and completes the proof of our claim. Indeed, $\eta(1,A) \subset \sbr{\cap_{i=d+1}^m \p_i} \cap \B_\rho^{M_1}$ by (iv), and it needs to show that $\eta(1,A)$ is partially $d$-linked to $\mathbb{S}_{k,d}^\bot$. By Proposition  \ref{propwelldefined semi-nodal}, we have
	\begin{align*}
		m_{\vec{c},\rho,k,d}^\delta > \sup_{\partial \mathbb{M}_{k+1,d}\times\lbr{(\sqrt{c_{d+1}\phi_{1}},\cdots,\sqrt{c_{m}}\phi_{1})}}E(\vec{u}).
	\end{align*}
	With $\ep > 0$ small enough we get
	\[m_{\vec{c},\rho,k,d}^\delta - 2\ep > \sup_{\partial \mathbb{M}_{k+1,d}\times\lbr{(\sqrt{c_{d+1}}\phi_{1},\cdots,\sqrt{c_{m}}\phi_{1})}}E(\vec{u}),\]
	that is, $\partial \mathbb{M}_{k+1,d}\times\lbr{(\sqrt{c_{d+1}}\phi_{1},\cdots,\sqrt{c_{m}}\phi_{1})}\subset U_1$. By property (i), we conclude that
	\[\eta(1,\cdot)|_{\partial \mathbb{M}_{k+1,d}\times\lbr{(\sqrt{c_{d+1}}\phi_{1},\cdots,\sqrt{c_{m}}\phi_{1})}} = \textbf{id}\quad\mbox{and}\quad\partial \mathbb{M}_{k+1,d}\times\lbr{(\sqrt{c_{d+1}}\phi_{1},\cdots,\sqrt{c_{m}}\phi_{1})}\subset\eta(1,A).\]
	Moreover, for any $h \in C(\eta(1,A),S_{\vec{c}})$, $h(\eta(1,\cdot)) \in C(A,S_{\vec{c}})$ and so $T_{d}\circ h(\eta(1,A)) \cap \mathbb{S}_{k,d}^{\bot} \neq \emptyset$.
	
	Now, with $\ep_n \to 0^+$ we obtain a sequence $\{\vec u_n\} = \{\vec u_{\ep_n}\} \subset S_{d}^*(\delta)\cap\sbr{\cap_{i = d+1}^m \p_{i}} \cap \B_\rho^{M_1}$ such that
	\begin{align*}
		m_{\vec{c},\rho,k,d}^\delta - \ep_n \le E(\vec u_n) \le m_{\vec{c},\rho,k,d}^\delta + \ep_n \quad \text{and} \quad \|V(\vec u_n)\| < \sqrt{2\ep_n},
	\end{align*}
	succeeding to find a Palais-Smale sequence of $E$ constrained to $S_{\vec c}$ in $\sbr{\cap_{i=d+1}^m \p_{i}}\cap\B_\rho^{M_1} \cap S_{d}^*(\delta)$ at the level $m_{\vec{c},\rho,k,d}^\delta$.
	
	By Lemma \ref{lemma 2.1} and \eqref{defi of rho} we have
	\begin{align*}
		M_1 < \frac12 \rho < \frac{1}{4}( \mu_{\max}^+ + \beta_{\max}^+)^{-1}\mathcal{C}_N^4.
	\end{align*}
	Then, using Lemma \ref{lempsconbeta>0}, up to a subsequence, there exists $\vec{u} \in S_{\vec{c}}$ such that $\vec{u}_n \to \vec{u}$ strongly in $H_0^1(\Omega,\R^m)$ and $V(\vec{u}) = 0$. Clearly, $E(\vec{u}) = m_{\vec{c},\rho,k,d}^\delta$. By $\vec{u} \in \overline{S_{d}^*(\delta)}$ it follows that $u_{1},\cdots,u_{d}$ are sign-changing. Moreover, since $\vec{u}\in\sbr{\cap_{i=d+1}^m \p_{i}}$, we get that $u_{d+1},\cdots,u_{m}$ are nonnegative. By Lagrange multiplier principle, $\vec{u}$ is a solution of \eqref{eq:mainequation}  with $\la_i, i =1,\cdots,m,$ being Lagrange multipliers. Then the strong maximum principle implies $u_{d+1},\cdots,u_{m}$ are positive in $\Omega$. Now, we obtain $(d,m-d)$-semi-nodal  normalized solutions and complete the proof.
\end{proof}

Now we can complete the proof of Theorem \ref{thm1.2}.

\begin{proof}[Proof of Theorem \ref{thm1.2}]
	Let $j$ be a positive integer and take $0 < \Lambda_1 = \Lambda_{k_1} < \Lambda_{k_2} < \Lambda_{k_3} < \cdots \Lambda_{k_{j}}$ such that $\Lambda_{k_l} < \Lambda_{k_l + 1}$ for $l = 1,2,\cdots,j$.
	\medbreak
	By Remark \ref{rmktildeck semi-nodal}, there exists $\tilde c_{j,d} > 0$ such that for $\vec{c} = (c_1,\cdots,c_m)$ with $c_i > 0$, $i = 1,\cdots,m$, and $\frac{\max_{1 \le i \le m} c_i^2}{\min_{1\le i \le m} c_i} < \tilde c_{j,d}$, we can always take $\rho > 0$ such that \eqref{defi of rho}, \eqref{eqci5.1} and \eqref{eqci5.2} hold true for $k = k_1,\cdots,k_j$. We can further assume that $2\rho < (\beta_{\max}^+)^{-1}\mathcal{C}_N^2$ if $N = 4$ and $(\mu_i^+ + \beta_{\max}^+)^2\rho^3 \leq \Lambda_1\mathcal{C}_N^{8}c_i$, $i = 1,2,\cdots,m$, when $N \leq 4$. Then, by Theorem \ref{thmbeta>0seminodal}, for sufficiently small $\delta > 0$, \eqref{eq:mainequation}  has $(d,m-d)$-semi-nodal  normalized solutions at levels $m_{\vec{c},\rho,k_1,d}^\delta, m_{\vec{c},\rho,k_2,d}^\delta, \cdots, m_{\vec{c},\rho,k_j,d}^\delta$. Next, we will prove that $m_{\vec{c},\rho,k_1,d}^\delta < m_{\vec{c},\rho,k_2,d}^\delta < \cdots < m_{\vec{c},\rho,k_j,d}^\delta$, which enables us to arrive at the desired conclusion. By \eqref{eqestiofm semi-nodal}, we have
	\begin{align} \label{eqm>seminodal}
		m_{\vec{c},\rho,k_l,d}^\delta > \sup_{\partial \mathbb{M}_{k_{l}+1,d} \times \lbr{(\sqrt{c_{d+1}}\phi_1, \cdots, \sqrt{c_{m}}\phi_1)}}E(\vec{u}) \geq \sup_{\mathbb{S}_{k_{l},d}\times \lbr{(\sqrt{c_{d+1}}\phi_1, \cdots, \sqrt{c_{m}}\phi_1)}}E(\vec{u}).
	\end{align}
	and
	\begin{align} \label{eqm<seminodal}
		m_{\vec{c},\rho,k_l,d}^\delta \leq \sup_{\mathbb{M}_{k_{l}+1,d} \times \lbr{(\sqrt{c_{d+1}}\phi_1, \cdots, \sqrt{c_{m}}\phi_1)}}E(\vec{u}) = \sup_{\mathbb{S}_{k_l+1,d}\times \lbr{(\sqrt{c_{d+1}}\phi_1, \cdots, \sqrt{c_{m}}\phi_1)}}E(\vec{u}).
	\end{align}
	Combining \eqref{eqm>seminodal} and \eqref{eqm<seminodal} we conclude that
	\begin{align*}
		& \ m_{\vec{c},\rho,k_1,d}^\delta \leq \sup_{\mathbb{S}_{k_1+1,d}\times \lbr{(\sqrt{c_{d+1}}\phi_1, \cdots, \sqrt{c_{m}}\phi_1)}}E(\vec{u}) \leq \sup_{\mathbb{S}_{k_2,d}\times \lbr{(\sqrt{c_{d+1}}\phi_1, \cdots, \sqrt{c_{m}}\phi_1)}}E(\vec{u}) \\
		< \ & m_{\vec{c},\rho,k_2,d}^\delta \leq \sup_{\mathbb{S}_{k_2+1,d}\times \lbr{(\sqrt{c_{d+1}}\phi_1, \cdots, \sqrt{c_{m}}\phi_1)}}E(\vec{u}) \leq \sup_{\mathbb{S}_{k_3,d}\times \lbr{(\sqrt{c_{d+1}}\phi_1, \cdots, \sqrt{c_{m}}\phi_1)}}E(\vec{u}) \\
		< \ & \cdots \\
		< \ &  m_{\vec{c},\rho,k_{j-1},d}^\delta \leq \sup_{\mathbb{S}_{k_{j-1}+1,d}\times \lbr{(\sqrt{c_{d+1}}\phi_1, \cdots, \sqrt{c_{m}}\phi_1)}}E(\vec{u}) \leq \sup_{\mathbb{S}_{k_j,d}\times \lbr{(\sqrt{c_{d+1}}\phi_1, \cdots, \sqrt{c_{m}}\phi_1)}}E(\vec{u}) \\
		< \ & m_{\vec{c},\rho,k_j,d}^\delta.
	\end{align*}
	The proof is complete.
\end{proof}

\section{Limit behaviors as $\vec{c}\to\vec{0}^+$ and the bifurcation phenomenon}\label{limit}

\subsection{Proof of Theorem \ref{limitbehaviorhsignchanging}}

In this subsection we are devoted to prove Theorem \ref{limitbehaviorhsignchanging}. Let $(\lambda_{1},\cdots,\lambda_{m})=(\Lambda_{k_{1}},\cdots,\Lambda_{k_{m}})$ where $k_{1},\cdots,k_{m}$ are positive integers. We firstly show that $(\lambda_{1},\cdots,\lambda_{m})=(\Lambda_{k_{1}},\cdots,\Lambda_{k_{m}})$ is a nontrivial bifurcation point of system \eqref{eq:mainequation} when $k_i \ge 2$ for each $i \in \{1,\cdots,m\}$.

\begin{proposition} \label{propbifurcationsign-changing}
	Assume that \eqref{assumption} holds. Let $(\lambda_{1},\cdots,\lambda_{m})=(\Lambda_{k_{1}},\cdots,\Lambda_{k_{m}})$ where $k_{1},\cdots,k_{m}$ are positive integers no less than $2$. Then $(\lambda_{1},\cdots,\lambda_{m})$ is a nontrivial bifurcation point of system \eqref{eq:mainequation}.
\end{proposition}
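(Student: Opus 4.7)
The plan is to derive the proposition from a componentwise refinement of Theorem~\ref{thm1.1}, combined with a quantitative asymptotic analysis of the resulting sign-changing normalized solutions as the prescribed mass vector tends to zero. More precisely, for each tuple $(k_1,\ldots,k_m)$ of integers with $k_i\geq 2$ and (as a temporary extra assumption) $\Lambda_{k_i-1}<\Lambda_{k_i}$ for every $i$, I would construct a family $\{\vec u(\vec c)\}$ of sign-changing normalized solutions, parametrised by small $\vec c$, whose Lagrange multipliers $\vec\lambda(\vec c)$ tend to $(\Lambda_{k_1},\ldots,\Lambda_{k_m})$ as $\vec c \to \vec 0^+$; the case of repeated eigenvalues is then recovered at the end by approximating inside the degenerate cluster.

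The first step is to generalise the minimax machinery of Section~\ref{subsection 2.2} so that the integer $k$, currently the same for every component, is allowed to depend on $i$. I would replace $\mathbb{M}_{k+1}$ by the product $\widetilde{\mathbb{M}}$ whose $i$-th factor is $\mathbb{M}^i_{k_i}$ (defined as in \eqref{defi of Mk} but with index $k_i$ in place of $k+1$), and replace $\mathbb{S}^\bot_k$ by the product $\widetilde{\mathbb{S}}^\bot$ whose $i$-th factor is $S_{c_i}\cap H^\bot_{k_i-1}$, and then define the associated minimax level $\widetilde m^\delta_{\vec c}$ exactly as in \eqref{def:mcrhokdelta}. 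All the energy estimates of Lemma~\ref{lemma 2.2}, the topological linking argument of Lemma~\ref{lemnemp}, and the Palais--Smale scheme of Sections~\ref{secdescendingflow}--\ref{secsign-changing} are performed componentwise and therefore adapt essentially verbatim; the only change is that the smallness condition on $\vec c$ now depends on the tuple $(k_1,\ldots,k_m)$. The outcome is, for every $\vec c$ in an explicit small-mass region depending on this tuple, a sign-changing normalized solution $\vec u(\vec c) \in \B_\rho^{M_1}\cap\overline{S^*(\delta)}$ at level $E(\vec u(\vec c))=\widetilde m^\delta_{\vec c}$ with associated Lagrange multipliers $\vec\lambda(\vec c)\in\R^m$.

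Next I would take any $\vec c_n=(c_{n,1},\ldots,c_{n,m})\to\vec 0^+$ inside this region and pass to the limit. The upper bound $\widetilde m^\delta_{\vec c_n}\leq\sup_{\widetilde{\mathbb M}}E$, evaluated componentwise as in \eqref{e7}, is of order $\sum_i c_{n,i}$ and, together with Lemma~\ref{lemma 2.1}, forces $\vec u_n:=\vec u(\vec c_n)\to 0$ strongly in $\HR$; at the same time, the membership $\vec u_n\in\overline{S^*(\delta)}$ for a fixed $\delta$ keeps each component quantitatively away from the positive and negative cones. To identify $\lambda_{n,i}$ I rescale $v_{n,i}:=u_{n,i}/\sqrt{c_{n,i}}$, so that $\|v_{n,i}\|_{L^2}=1$, and observe that $v_{n,i}$ satisfies
\begin{equation*}
-\Delta v_{n,i}+\lambda_{n,i}v_{n,i}=c_{n,i}\mu_i v_{n,i}^3+\sum_{j\neq i}c_{n,j}\beta_{ji}v_{n,j}^2 v_{n,i}.
\end{equation*}
Once $\{v_{n,i}\}$ is shown to be bounded in $H_0^1(\Omega)$, pairing this equation with $v_{n,i}$ yields $\|\nabla v_{n,i}\|_{L^2}^2+\lambda_{n,i}=o(1)$, and standard elliptic compactness produces, along a subsequence, a strong $H_0^1$-limit $v^*_i$ which is an $L^2$-unit eigenfunction of $-\Delta$ with eigenvalue $\Lambda_{\ell_i}$, forcing $\lambda_{n,i}$ to converge to the level corresponding to $\Lambda_{\ell_i}$ in the spectral normalisation of the statement.

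The technical heart is to pin down $\ell_i=k_i$ for every $i$. The upper sandwich bound $\widetilde m^\delta_{\vec c}\leq\sup_{\widetilde{\mathbb M}}E$, refined componentwise as in the proof of Lemma~\ref{lemma 2.2}, yields $\|\nabla v_{n,i}\|_{L^2}^2\leq\Lambda_{k_i}+o(1)$, hence $\ell_i\leq k_i$. For the matching lower bound I would combine the lower sandwich bound $\widetilde m^\delta_{\vec c}\geq\inf_{\B_\rho\cap\widetilde{\mathbb S}^\bot}E$ with a componentwise probing argument: by testing $\widetilde m^\delta_{\vec c}$ against one-parameter families of admissible sets that selectively inflate or deflate individual components, one extracts sharper sandwich inequalities forcing $\|\nabla v_{n,i}\|_{L^2}^2\geq\Lambda_{k_i}+o(1)$, hence $\ell_i\geq k_i$. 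The degenerate case $\Lambda_{k_i-1}=\Lambda_{k_i}$ for some $i$ is finally recovered by approximating $(k_1,\ldots,k_m)$ by nearby tuples satisfying the strict separation, extracting a bifurcating sequence for each approximation, and passing to the limit inside the eigenvalue cluster. The hypothesis $k_i\geq 2$ is used here to guarantee that the limiting eigenfunctions $v_i^*$ are sign-changing, which is consistent with $\vec u_n\in\overline{S^*(\delta)}$ and confirms the nontriviality of every component in the sense of Definition~\ref{bifurcation}. The main obstacle is precisely this decoupling step, since the raw sandwich inequalities only control the \emph{sum} $\sum_i\|\nabla u_{n,i}\|^2/c_{n,i}$, whereas the bifurcation claim demands an individual asymptotic for each $i$; this is the $m$-dimensional difficulty already alluded to in the discussion following Definition~\ref{bifurcation} and is what distinguishes the present coupled system from the scalar case treated in \cite{SZ}.
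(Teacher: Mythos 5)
Your overall scheme (rescale $v_{\vec c,i}=u_{\vec c,i}/\sqrt{c_i}$, show boundedness, pass to the limit, identify the limiting eigenvalues) is exactly the skeleton of the paper's proof, and your observation that the raw sandwich inequalities only control the sum $\sum_i c_i^{-1}\|\nabla u_{\vec c,i}\|^2$ identifies precisely the right obstruction. However, your resolution of that obstruction is not an argument. You claim that the upper bound of the sandwich, ``refined componentwise,'' gives $\|\nabla v_{n,i}\|^2\le\Lambda_{k_i}+o(1)$ for each $i$; this does not follow, since the energy bounds are scalar and constrain only a weighted sum over $i$. The same applies to the proposed lower bound: the minimax critical point lies in $\B_\rho^{M_1}\cap\overline{S^*(\delta)}$ but is not in $\widetilde{\mathbb S}^\bot$, so there is no a priori spectral orthogonality forcing $\|\nabla v_{n,i}\|^2\ge\Lambda_{k_i}$. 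The phrase ``testing $\widetilde m^\delta_{\vec c}$ against one-parameter families of admissible sets that selectively inflate or deflate individual components'' describes an intention, not a mechanism, and I do not see how to make it yield componentwise asymptotics. In fact, the stronger claim implicit in your plan --- that the Lagrange multipliers converge to $(\Lambda_{k_1},\dots,\Lambda_{k_m})$ along \emph{every} admissible sequence $\vec c\to\vec 0^+$ --- is not established by the paper either and need not be true.

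What the paper actually does at this point is weaker but sufficient, and requires a genuinely different idea that your proposal is missing. It only needs \emph{one} sequence $\vec c_n\to\vec 0^+$ realizing the desired limit, and argues by contradiction with a measure-theoretic trick: fix a direction $\vec t$ in a positive-measure slice $T$ of the simplex, let $r\to 0$ along $\vec c=r\vec t$, extract the subsequential limits $\Lambda_{j_i(\vec t)}$; the asymptotics of the total Dirichlet energy give the single scalar constraint $\sum_i t_i(\Lambda_k-\Lambda_{j_i(\vec t)})=0$. If for no $\vec t$ all $j_i$ equal $k$, then every $\vec t$ is orthogonal to some nonzero vector drawn from the countable set $\{(\Lambda_k-\Lambda_{j_1},\dots,\Lambda_k-\Lambda_{j_m})\}$, so $T$ is contained in a countable union of $(m-2)$-dimensional affine slices, contradicting that $T$ has positive $(m-1)$-dimensional measure. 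This is the step that converts a scalar constraint into componentwise information, and it is the heart of the proof; your proposal never gets there. Finally, your treatment of the degenerate case $\Lambda_{k_i-1}=\Lambda_{k_i}$ is off: there is nothing to approximate or to pass to the limit in, since the eigenvalue in a cluster is fixed; one simply replaces $k_i$ by the smallest index giving the same eigenvalue (which remains $\ge 2$ because $\Lambda_1$ is simple), so the WLOG in the paper is immediate.
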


\begin{proof}
	We just provide the proof with $k_{1}=\cdots=k_{m}\geq 2$ by using Theorem \ref{thmbeta>0}. Modifying the definitions of $\mathcal{H}_{k}$ and $\mathcal{H}_{k}^{\bot}$ in \eqref{defi of Hk} as
	\[\mathcal{H}_{\vec{k}}:=H_{k_{1}}\times\cdots\times H_{k_{m}}\quad\mbox{and}\quad\mathcal{H}_{\vec{k}}^{\bot}=H_{k_{1}}^{\bot}\times\cdots\times H_{k_{m}}^{\bot},\]
	and adapting the proof strategy when $k_{1}=\cdots=k_{m}\geq 2$, we can complete proof of the general case with $k_{1},\cdots,k_{m} \ge 2$.
	
	Let $k = k_{1}=\cdots=k_{m}\geq 2$. Without loss of generality, we assume that $\Lambda_{k-1} < \Lambda_k$. In this proof we always suppose for $\vec{c} = (c_1,\cdots,c_m) \in (0,\infty)^m$ that
	\begin{equation} \label{eqci2leq}
		\sbr{\max_{1 \le i \le m} c_{i}^2}\sbr{\min_{1 \le i \le m} c_{i}}^{-1} <\hat{c}_{k},
	\end{equation}
	so that we can take $\rho > 0$ with assumptions in Theorem \ref{thmbeta>0} satisfied for $k-1$. Then, there exists a sign-changing solution $\vec{u}_{\vec{c}}$ with Lagrange multipliers $\vec{\lambda}_{\vec{c}}$ at level $m_{\vec{c},\rho,k-1}^{\delta}$ for sufficiently small $\delta > 0$. For $i=1,\cdots,m$, let $v_{\vec{c},i}:=\frac{1}{\sqrt{c_{i}}}u_{\vec{c},i}$. In the following we further assume that there exists a constant $\overline{c}>1$ such that
	\begin{equation}\label{framaxcimincileq}
		\sbr{\max_{1 \le i \le m} c_{i}}\sbr{\min_{1 \le i \le m} c_{i}}^{-1}\leq\overline{c}.
	\end{equation}
	
	\vskip0.1in
	\emph{Step 1:} $\{\vec{v}_{\vec{c}}\}$ and $\{\vec{\lambda}_{\vec{c}}\}$ are bounded in $H_{0}^1(\Omega,\mathbb{R}^m)$ and $\mathbb{R}^m$ respectively as $\vec{c}\to\vec{0}^+$.
	
	By \eqref{e7}, \eqref{e9} and Proposition \ref{propwelldefined}, we know
	\begin{equation*}
		E(\vec{u}_{\vec{c}})=m_{\vec{c},\rho,k-1}^{\delta}\leq\sup_{\mathbb{S}_{k}}E\leq\frac{1}{4}\mbr{2- \sbr{  \mu_{\min}^- + \beta_{\min}^-} \mathcal{C}_N^{-4} \rho }\sum_{i= 1}^m  {c_i }\Lambda_{k}
	\end{equation*}
	and
	\begin{equation*}
		E(\vec{u}_{\vec{c}})\geq\inf_{\B_\rho   ^{M_1}\cap\mathbb{S}_{k-1}^\bot}E\geq
		\frac 14 \sum_{i=1}^m \mbr{  \sbr{2- \beta_{\max}^+\mathcal{C}_N^{-4}\rho}c_i\Lambda_{k}  -\mu_{\max}^+\mathcal{C}_N^{-4}  \sbr{c_i\Lambda_{k} }^2}.
	\end{equation*}
	By the choice of $\rho$ (see Remark \ref{rmktildeck}), we know $\rho\to 0$ as $\vec{c}\to\vec{0}^+$. This implies
	\begin{equation}\label{Evecuveccl}
		E(\vec{u}_{\vec{c}})=\Big(\frac{1}{2}+o_{\vec{c}}(1)\Big)\sum_{i=1}^mc_{i}\Lambda_{k}\quad\mbox{as}\ \vec{c}\to\vec{0}^+.
	\end{equation}
	Using the Sobolev inequality, we have
	\begin{equation*}
		E(\vec{u}_{\vec{c}})\leq\frac{1}{4}\mbr{2- \sbr{  \mu_{\min}^- + \beta_{\min}^-} \mathcal{C}_N^{-4} \rho }\sum_{i=1}^m\int_\Omega |\nabla u_{\vec{c},i}|^2dx
	\end{equation*}
	and
	\begin{equation}
		E(\vec{u}_{\vec{c}})\geq\frac14 \sbr{2-\beta_{\max}^+\mathcal{C}_{N}^{-4}\rho}\sum_{i=1}^m\int_\Omega |\nabla u_{\vec{c},i}|^2dx -  \frac14 \mu_{\max}^+\mathcal{C}_N^{-4}\sum_{i=1}^m\sbr{\int_\Omega |\nabla u_{\vec{c},i}|^2dx}^2.
	\end{equation}
	Since
	\[\sum_{i=1}^m\int_{\Omega}\lvert\nabla u_{\vec{c},i}\rvert^2dx<\rho,\]
	we obtain
	\[E(\vec{u}_{\vec{c}})=\Big(\frac{1}{2}+o_{\vec{c}}(1)\Big)\sum_{i=1}^m\int_{\Omega}\lvert\nabla u_{\vec{c},i}\rvert^2dx\quad\mbox{as}\ \vec{c}\to\vec{0}^+,\]
	which combining \eqref{Evecuveccl} implies
	\begin{equation}\label{limvecctovec0}
		\lim_{\vec{c}\to\vec{0}^+}\frac{\sum_{i=1}^m\int_{\Omega}\lvert\nabla u_{\vec{c},i}\rvert^2dx}{\sum_{i=1}^mc_{i}}=\Lambda_{k}.
	\end{equation}
	Therefore, by \eqref{framaxcimincileq}, we know
	\[\sum_{i=1}^m\int_{\Omega}\lvert\nabla v_{\vec{c},i}\rvert^2dx=\sum_{i=1}^m\frac{\int_{\Omega}\lvert\nabla u_{\vec{c},i}\rvert^2dx}{c_{i}}\]
	is bounded, that is $\{\vec{v}_{\vec{c}}\}$ is bounded in $H_{0}^1(\Omega,\mathbb{R}^m)$.
	
	For $i=1,\cdots,m$, direct calculations show that $\vec{v}_{\vec{c}}$ satisfies
	\begin{equation}\label{vecvveccji}
		-\Delta v_{\vec{c},i}+\lambda_{\vec{c},i}v_{\vec{c},i}=\mu_{i}c_{i}v_{\vec{c},i}^3+\sum_{j \neq i}\beta_{ij}c_{j}v_{\vec{c},j}^2 v_{\vec{c},i}\quad \mbox{in}\ \Omega,
	\end{equation}
	which implies $\{\lambda_{\vec{c},i}\}$ is bounded, since $\lVert v_{\vec{c},i}\rVert_{L^2(\Omega)}=1$.

	\vskip0.1in
	\emph{Step 2:}
	For $i=1,\cdots,m$, $v_{\vec{c},i}$ converges strongly to some eigenfunction of $-\Delta$ on $H_{0}^1(\Omega)$ and $-\lambda_{\vec{c},i}$ converges to the corresponding eigenvalue up to a subsequence as $\vec{c}\to\vec{0}^+$.
	
	By Step 1, up to a subsequence we assume that
	\begin{align}
		\begin{aligned}
			&v_{\vec{c},i}\rightharpoonup v_{i}\quad\mbox{in}\ H_{0}^1(\Omega),\\
			&v_{\vec{c},i}\to v_{i}\quad\mbox{in}\ L^2(\Omega),\\
			&v_{\vec{c},i}\to v_{i}\quad\mbox{almost everywhere in}\ \Omega,\\
			&-\lambda_{\vec{c},i} \to \lambda_{i}\quad\mbox{in} \ \mathbb{R},
		\end{aligned}
	\end{align}
	as $\vec{c}\to\vec{0}^+$. Since $\lVert v_{i}\rVert_{L^2(\Omega)}=1$, we know $v_{i}\neq 0$. Obviously,
	\[c_{i}\int_{\Omega}v_{\vec{c},i}^4dx\to 0\ \ \mbox{and}\ \ \sum_{j \neq i}c_{j}\int_{\Omega}v_{\vec{c},i}^2v_{\vec{c},j}^2dx\to 0\quad\mbox{as}\ \vec{c}\to\vec{0}^+.\]
	Thus, from \eqref{vecvveccji}, we know $v_{\vec{c},i}\to v_{i}$ in $H_{0}^1(\Omega)$ and $v_{i}$ satisfies
	\[-\Delta v_{i} = \lambda_{i}v_{i}\quad\mbox{in}\ \Omega.\]
	This shows that $\lambda_{i}$ is an eigenvalue of $-\Delta$ on $H_{0}^1(\Omega)$ and $v_{i}$ is the corresponding eigenfunction.
	
	\vskip0.1in
	\emph{Step 3:} Completion.
	
	Observe that $\lambda_{i}$ in Step 2 depends on the sequence $\{\vec{c}\}_{\vec{c} \to\vec{0}^+}$. To complete the proof, it suffices to show that there exists $\{\vec{c}_n\}$ such that \eqref{eqci2leq} and \eqref{framaxcimincileq} hold, $\vec{c}_n \to\vec{0}^+$, and $\la_1 = \cdots = \la_m = \Lambda_k$. Arguing by contradiction, we suppose that such sequence $\{\vec{c}_n\}$ does not exist. Let
	\begin{equation} \label{def: set T}
		T := \lbr{(t_1,\cdots,t_m) \in \R^m: t_1, \cdots, t_m > 0, ~\sum_{i =1}^mt_i = 1,~ \sbr{\max_{1 \le i \le m} t_{i}}\sbr{\min_{1 \le i \le m} t_{i}}^{-1}\leq\overline{c}}.
	\end{equation}
	For any $\vec{t} \in T$ and $0 < r < \hat{c}_k/\overline{c}$, we have that $\vec{c} = r\vec{t}$ satisfies \eqref{eqci2leq} and \eqref{framaxcimincileq}. Now we fix $\vec{t} = (t_1,\cdots,t_m) \in T$ and take $\vec{c} = r\vec{t}$ with $r \to 0$ such that $\vec{c} \to \vec{0}^+$. By Step 2, we assume that $-\lambda_{\vec{c},i}\to\Lambda_{j_{i}}$ up to a subsequence as $\vec{c}\to\vec{0}^+$. By the assumption based on contradiction arguments, it is impossible that $\Lambda_{j_{1}} = \cdots = \Lambda_{j_{m}} = \Lambda_k$.
	From \eqref{limvecctovec0}, we obtain that
	\begin{align}
		\lim_{\vec{c}\to\vec{0}^+}\frac{\sum_{i=1}^m\int_{\Omega}\lvert\nabla u_{\vec{c},i}\rvert^2dx}{\sum_{i=1}^mc_{i}}&=\lim_{\vec{c}\to\vec{0}^+}\sum_{i=1}^mt_{i}\frac{\int_{\Omega}\lvert\nabla u_{\vec{c},i}\rvert^2dx}{c_{i}}\\
		&=\lim_{\vec{c}\to\vec{0}^+}\sum_{i=1}^mt_{i}\int_{\Omega}\lvert\nabla v_{\vec{c},i}\rvert^2dx\\
		&=\sum_{i=1}^mt_{i}\Lambda_{j_{i}}=\Lambda_{k},
	\end{align}
	that is
	\[\sum_{i=1}^mt_{i}(\Lambda_{k}-\Lambda_{j_{i}})=0.\]
	This implies $(t_{1},\cdots,t_{m})$ is orthogonal to $(\Lambda_{k}-\Lambda_{j_{1}},\cdots,\Lambda_{k}-\Lambda_{j_{m}})$. Let $A$ denote the set consisting all $(\Lambda_{k}-\Lambda_{j_{1}},\cdots,\Lambda_{k}-\Lambda_{j_{m}})$ when $\vec{t}$ varies in $T$. We have assumed that $\vec{0}\notin A$. Thus for each $\vec{a}\in A$, all vectors that orthogonal to $\vec{a}$ form an $(m-1)$-dimensional hyperplane passing through the origin. It can be shown that the intersection of this hyperplane and $T$ is of $(m-2)$-dimension. Denote this intersection by $T_{\vec{a}}$, then dim$T_{\vec{a}}=m-2$ and hence the $(m-1)$-dimensional measure of $T_{\vec{a}}$ is zero. Since $\{\Lambda_{k}\}_{k\geq 1}$ is countable, $A$ is countable and hence the $(m-1)$-dimensional measure of $\cup_{\vec{a} \in A}T_{\vec{a}}$ is zero. However, $T = \cup_{\vec{a} \in A}T_{\vec{a}}$ and $T$ has a positive $(m-1)$-dimensional measure. We have reached a contradiction and complete the proof.
	
\end{proof}

Next we consider $(\lambda_{1},\cdots,\lambda_{m})=(\Lambda_{k_{1}},\cdots,\Lambda_{k_{m}})$ when there exists $i \in \{1,\cdots,m\}$ such that $k_i = 1$. Rearranging $\lambda_{1}, \cdots, \lambda_{m}$ if necessary, we assume that $(\lambda_{1},\cdots,\lambda_{m})=(\Lambda_{k_{1}},\cdots,\Lambda_{k_{d}},\Lambda_1,\cdots,\Lambda_1)$ with $k_{i} \ge 2$ for $i \in \{1,\cdots,d\}$. The case $d = 0$ is allowed, which means that $(\lambda_{1},\cdots,\lambda_{m})=(\Lambda_1,\cdots,\Lambda_1)$.

\begin{proposition} \label{propbifurcationsemi-nodal}
	Assume that \eqref{assumption} holds. Let $(\lambda_{1},\cdots,\lambda_{m})=(\Lambda_{k_{1}},\cdots,\Lambda_{k_{d}},\Lambda_1,\cdots, \Lambda_1)$ where $d$ is an integer with $0 \le d \le m-1$ and $k_{1},\cdots,k_{d}$ are positive integers no less than $2$. Then $(\lambda_{1},\cdots,\lambda_{m})$ is a nontrivial bifurcation point of system \eqref{eq:mainequation}.
\end{proposition}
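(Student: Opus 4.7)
The plan is to adapt the three-step strategy of Proposition \ref{propbifurcationsign-changing}, replacing Theorem \ref{thmbeta>0} by its semi-nodal counterpart Theorem \ref{thmbeta>0seminodal}. For brevity I first treat $k_1=\cdots=k_d=k\geq 2$; the general tuple $(k_1,\ldots,k_d)$ is handled by the same modification outlined at the start of the proof of Proposition \ref{propbifurcationsign-changing}, replacing each copy of $H_k$ by the corresponding $H_{k_i}$ in the definition of $\HR_{k,d}$ and reworking Lemmas \ref{lemma 5.2}--\ref{lemnemp semi-nodal} accordingly. For $\vec c$ with $(\max_i c_i^2)(\min_{1\leq i\leq d}c_i)^{-1}<\tilde c_{k,d}$, Theorem \ref{thmbeta>0seminodal} applied at the linking level $\mathbb{S}_{k-1,d}^\bot$ produces a $(d,m-d)$-semi-nodal solution $\vec u_{\vec c}$ with Lagrange multipliers $\vec\lambda_{\vec c}$ attaining $m_{\vec c,\rho,k-1,d}^\delta$; I set $v_{\vec c,i}:=u_{\vec c,i}/\sqrt{c_i}$.

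Combining Proposition \ref{propwelldefined semi-nodal} with the energy estimate of Lemma \ref{lemma 5.2} (and the fact, from Remark \ref{rmktildeck semi-nodal}, that $\rho$ may be chosen proportional to $\max_i c_i$), one obtains the asymptotics
\begin{align*}
	E(\vec u_{\vec c})=\Big(\tfrac12+o_{\vec c}(1)\Big)\Big(\Lambda_k\sum_{i=1}^d c_i+\Lambda_1\sum_{i=d+1}^m c_i\Big),\qquad \vec c\to\vec 0^+,
\end{align*}
and hence $\sum_i c_i^{-1}\int_\Omega|\nabla u_{\vec c,i}|^2 dx$ is bounded; this bounds $\vec v_{\vec c}$ in $H_0^1(\Omega,\R^m)$ and, via the equations satisfied by the $v_{\vec c,i}$, also bounds $\vec\lambda_{\vec c}$. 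Up to a subsequence, $v_{\vec c,i}\to v_i$ strongly in $H_0^1(\Omega)$ and $\lambda_{\vec c,i}\to\lambda_i^{*}$ with $-\Delta v_i=-\lambda_i^{*}v_i$ and $\|v_i\|_{L^2}=1$. The semi-nodal structure survives in the limit: since $\vec u_{\vec c}\in\overline{S_d^{*}(\delta)}$, each $v_i$ with $i\leq d$ is sign-changing so that $-\lambda_i^{*}=:\Lambda_{j_i}$ with $j_i\geq 2$; since $\vec u_{\vec c}\in\bigcap_{i=d+1}^m\PR_i$, each $v_i$ with $i\geq d+1$ is a nonnegative, nontrivial eigenfunction, whence $v_i\equiv\phi_1$ and $-\lambda_i^{*}=\Lambda_1$ by the simplicity of $\Lambda_1$.

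To force $\Lambda_{j_i}=\Lambda_{k_i}$ for $i\leq d$, I parameterize $\vec c=r\vec t$ with
\begin{align*}
	\vec t\in T_d:=\Big\{(t_1,\ldots,t_m)\in(0,\infty)^m:~\sum_i t_i=1,~(\max_i t_i)(\min_{1\leq i\leq d}t_i)^{-1}\leq\overline c\Big\}
\end{align*}
and let $r\to 0^+$. The energy asymptotics above, combined with the identity $\Lambda_k\sum_i c_i=\sum_{i=1}^d c_i\Lambda_{j_i}+\sum_{i=d+1}^m c_i\Lambda_1+o(\sum_i c_i)$, yield after dividing by $r$
\begin{align*}
	\sum_{i=1}^d t_i(\Lambda_{k_i}-\Lambda_{j_i})=0,
\end{align*}
so that $(t_1,\ldots,t_d)$ is orthogonal in $\R^d$ to the countably many possible vectors $(\Lambda_{k_i}-\Lambda_{j_i})_{i=1}^d$. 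Exactly as in Step 3 of Proposition \ref{propbifurcationsign-changing}, a measure-theoretic argument (each nonzero such vector carves out an $(m-2)$-dimensional slice of $T_d$, so their countable union has $(m-1)$-measure zero while $T_d$ itself has positive $(m-1)$-measure) yields $\vec t\in T_d$ along which $\Lambda_{j_i}=\Lambda_{k_i}$ for all $i\leq d$, producing a sequence $\vec c_n\to\vec 0^+$ exhibiting the prescribed bifurcation point. The edge case $d=0$, corresponding to $(\Lambda_1,\ldots,\Lambda_1)$, is reached by the analogous argument applied to all-positive normalized solutions obtained by constrained minimization on $S_{\vec c}\cap\bigcap_{i=1}^m\PR_i\cap\B_\rho$; only the second sum in the energy expansion appears, and the conclusion is immediate because every $v_i$ is forced to coincide with $\phi_1$ already by simplicity of $\Lambda_1$.

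The main technical obstacle is ruling out the collapse $v_i\equiv 0$ for some $i\geq d+1$ in the limit, i.e.\ guaranteeing that the positive components genuinely carry mass. This is delicate because the minimax construction of Subsection \ref{subsection 2.3} merely enforces positivity of these components, not a quantitative lower bound on $c_i^{-1}\int|\nabla u_{\vec c,i}|^2 dx$. The remedy is the sharp two-sided estimate of $m_{\vec c,\rho,k-1,d}^\delta$ in Proposition \ref{propwelldefined semi-nodal}: the upper bound, saturated by the test profile $(\sqrt{c_{d+1}}\phi_1,\ldots,\sqrt{c_m}\phi_1)$, forces $\sum_{i=d+1}^m c_i^{-1}\int|\nabla u_{\vec c,i}|^2 dx\to(m-d)\Lambda_1$, while the pointwise inequality $c_i^{-1}\int|\nabla u_{\vec c,i}|^2 dx\geq\Lambda_1$ (from $\|v_{\vec c,i}\|_{L^2}=1$ and the Poincar\'e inequality) forces each component individually to attain the bound, ruling out mass concentration in a proper subset of indices and thereby $v_i\not\equiv 0$ for every $i$.
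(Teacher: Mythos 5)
Your proof follows the paper's three-step strategy and is essentially correct: establish the energy asymptotics from Lemma \ref{lemma 5.2} and Proposition \ref{propwelldefined semi-nodal}, pass to the limit in the normalized components $v_{\vec c,i}$ and Lagrange multipliers to obtain eigenfunction/eigenvalue pairs, and then use the measure-theoretic argument over a simplex of mass-ratios to force $\Lambda_{j_i}=\Lambda_{k_i}$ for $i\le d$. Your simpler parametrization $\vec c = r\vec t$ (in place of the paper's $\vec c = (rt_1/\Lambda_k,\ldots,rt_d/\Lambda_k,rt_{d+1}/\Lambda_1,\ldots,rt_m/\Lambda_1)$) works equally well, since for each fixed $\vec t\in T_d$ the relevant ratios are fixed as $r\to 0^+$. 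Two side remarks deserve attention, though neither affects the conclusion. First, the assertion that each limit $v_i$ with $i\le d$ is sign-changing (and hence $j_i\ge 2$) is unjustified: the threshold $\delta$ in $\overline{S_d^*(\delta)}$ depends on $\vec c$ and shrinks as $\vec c\to\vec 0^+$, and strong $H_0^1$ convergence need not preserve sign-change. Fortunately $j_i\ge 2$ is never used; the measure-theoretic step pins down $\Lambda_{j_i}=\Lambda_{k_i}$ (with $k_i\ge 2$ by hypothesis) directly. Second, the closing paragraph on ``ruling out collapse $v_i\equiv 0$'' addresses a non-problem and contains a gap: $\|v_{\vec c,i}\|_{L^2}=1$ and Rellich compactness immediately give $\|v_i\|_{L^2}=1\ne 0$, so nothing more is needed. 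Moreover, the claim that the upper minimax estimate alone forces $\sum_{i>d}c_i^{-1}\int_\Omega|\nabla u_{\vec c,i}|^2 dx\to (m-d)\Lambda_1$ is not supported: it would require matching lower bounds on the components $i\le d$ sharper than what Poincar\'e ($\ge\Lambda_1$) supplies. The paper's reasoning for these components --- $L^2$ normalization, nonnegativity, and simplicity of $\Lambda_1$ --- is both necessary and sufficient; strike the closing paragraph and your proof matches the paper's.
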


\begin{proof}
	We first assume that $1 \le d \le m-1$. Similar to the proof of Proposition \ref{propbifurcationsign-changing}, we just provide the proof with $k_{1}=\cdots=k_{d}\geq 2$ by using Theorem \ref{thmbeta>0seminodal}.
	
	Let $k = k_{1}=\cdots=k_{d}\geq 2$. Without loss of generality, we assume that $\Lambda_{k-1} < \Lambda_k$. In this proof we always suppose for $\vec{c} = (c_1,\cdots,c_m) \in (0,\infty)^m$ that
	\begin{equation} \label{eqci2leqofd}
		\sbr{\max_{1 \le i \le m} c_{i}^2}\sbr{\min_{1 \le i \le m} c_{i}}^{-1} <\hat{c}_{k,d},
	\end{equation}
	so that we can take $\rho > 0$ with assumptions in Theorem \ref{thmbeta>0seminodal} satisfied for $k-1$. Then there exists a $(d,m-d)$-semi-nodal normalized  solution $\vec{u}_{\vec{c}}$ of \eqref{eq:mainequation}  with Lagrange multipliers $\vec{\lambda}_{\vec{c}}$ at level $m_{\vec{c},\rho,k-1,d}^\delta$ for sufficiently small $\delta >0$. For $i=1,\cdots,m$, let $v_{\vec{c},i}:=\frac{1}{\sqrt{c_{i}}}u_{\vec{c},i}$. In the following we further assume that \eqref{framaxcimincileq} holds true.
	
	\vskip0.1in
	\emph{Step 1:} $\{\vec{v}_{\vec{c}}\}$ and $\{\vec{\lambda}_{\vec{c}}\}$ are bounded in $H_{0}^1(\Omega,\mathbb{R}^m)$ and $\mathbb{R}^m$ respectively as $\vec{c}\to\vec{0}^+$.
	
	By \eqref{ss1}, \eqref{Eveculeq}, \eqref{qw1} and Proposition \ref{propwelldefined semi-nodal}, we know
	\begin{align}
		E(\vec{u}_{\vec{c}}) = m_{\vec{c},\rho,k-1,d}^{\delta} & \leq \sup_{\mathbb{S}_{k,d}\times \lbr{(\sqrt{c_{d+1}}\phi_1, \cdots, \sqrt{c_{m}}\phi_1)}}E \\
		& \le \frac{1}{4}\mbr{2- \sbr{ 	\mu_{\min}^-   +  \beta_{\min}^-} \mathcal{C}_N^{-4} \rho } \left( \Lambda_{k}\sum_{i= 1}^d c_i +  \Lambda_{1} \sum_{i=d+1}^m c_i \right) ,
	\end{align}
	and
	\begin{align}
		E(\vec{u}_{\vec{c}})&\geq\inf_{  	\B_\rho \cap \sbr{\mathbb{S}_{k-1,d}^\bot \times S_{\vec{c},m-d}}}E\\
		&\geq\frac 14 \sum_{i=1}^d \mbr{  \sbr{2- \beta_{\max}^+\mathcal{C}_N^{-4}\rho}c_i\Lambda_{k}  - \mu_{\max}^+ \mathcal{C}_N^{-4}  \sbr{c_i\Lambda_{k} }^2}\\
		&\qquad+ \frac 14 \sum_{i=d+1}^m \mbr{  \sbr{2-\beta_{\max}^+\mathcal{C}_N^{-4}\rho}c_i\Lambda_{1}  - \mu_{\max}^+ \mathcal{C}_N^{-4}  \sbr{c_i\Lambda_{1} }^2}.
	\end{align}
	Similar to Step 1 in proving Proposition \ref{propbifurcationsign-changing}, we have
	\begin{equation}\label{Evecuveccjd}
		E(\vec{u}_{\vec{c}})=\Big(\frac{1}{2}+o_{\vec{c}}(1)\Big)\Big(\sum_{i=1}^dc_{i}\Lambda_{k}+\sum_{i=d+1}^mc_{i}\Lambda_{1}\Big)\quad\mbox{as}\ \vec{c}\to\vec{0}^+.
	\end{equation}
	Moreover,
	\[E(\vec{u}_{\vec{c}})=\Big(\frac{1}{2}+o_{\vec{c}}(1)\Big)\sum_{i=1}^m\int_{\Omega}\lvert\nabla u_{\vec{c},i}\rvert^2dx\quad\mbox{as}\ \vec{c}\to\vec{0}^+.\]
	Therefore,
	\begin{equation}\label{limvecctovec0+}
		\lim_{\vec{c}\to\vec{0}^+}\frac{\sum_{i=1}^m\int_{\Omega}\lvert\nabla u_{\vec{c},i}\rvert^2dx}{\sum_{i=1}^dc_{i}\Lambda_{k}+\sum_{i=d+1}^mc_{i}\Lambda_{1}}=1,
	\end{equation}
	which combining (\ref{framaxcimincileq}) implies
	\[\sum_{i=1}^m\int_{\Omega}\lvert\nabla v_{\vec{c},i}\rvert^2dx=\sum_{i=1}^m\frac{\int_{\Omega}\lvert\nabla u_{\vec{c},i}\rvert^2dx}{c_{i}}\]
	is bounded as $\vec{c}\to\vec{0}^+$, that is $\{\vec{v}_{\vec{c}}\}$ is bounded in $H_{0}^1(\Omega,\mathbb{R}^m)$.
	
	For $i=1,\cdots,m$, direct calculations show that $\vec{v}_{\vec{c}}$ satisfies
	\begin{equation}\label{vclid}
		-\Delta v_{\vec{c},i}+\lambda_{\vec{c},i}v_{\vec{c},i}=\mu_{i}c_{i}v_{\vec{c},i}^3+\sum_{j \neq i}\beta_{ij} c_{j}v_{\vec{c},j}^2v_{\vec{c},i}\quad \mbox{in}\ \Omega,
	\end{equation}
	which implies $\{\lambda_{\vec{c},i}\}$ is bounded, since $\lVert v_{\vec{c},i}\rVert_{L^2(\Omega)}=1$.
	
	\vskip0.1in
	\emph{Step 2:}
	Similar to Step 1 in the proof of Proposition \ref{propbifurcationsign-changing}, for $i=1,\cdots,m$, $v_{\vec{c},i}$ converges strongly to some eigenfunction of $-\Delta$ on $H_{0}^1(\Omega)$ and $-\lambda_{\vec{c},i}$ converges to corresponding eigenvalue up to a subsequence as $\vec{c}\to\vec{0}^+$. Moreover, for $i=d+1,\cdots,m$, since $v_{\vec{c},i}$ is positive in $\Omega$, we know that $v_{\vec{c},i}$ converges strongly to first eigenfunction $\phi_{1}$ in $H_{0}^1(\Omega)$ and $-\lambda_{\vec{c},i}$ converges to first eigenvalue $\Lambda_{1}$.
	
	\vskip0.1in
	\emph{Step 3:} Completion.
	
	We can prove this step after a small modification of Step 3 in the proof of Proposition \ref{propbifurcationsign-changing}. Let $\overline{c}$ satisfy 
	$$
	\frac{\Lambda_1}{\Lambda_k}\overline{c} > 1,
	$$
	and the definition of set $T$ is modified as
	\begin{equation} \label{def: set Tofd}
		T := \lbr{(t_1,\cdots,t_m) \in \R^m: t_1, \cdots, t_m > 0, ~\sum_{i =1}^mt_i = 1, ~\sbr{\max_{1 \le i \le m} t_{i}}\sbr{\min_{1 \le i \le m} t_{i}}^{-1}\leq \frac{\Lambda_1}{\Lambda_k}\overline{c}}.
	\end{equation}
	For any $\vec{t} \in T$ and $0 < r < \Lambda_1\hat{c}_{k,d}/\overline{c}$, we have that $\vec{c} = (rt_1/\Lambda_k, \cdots, rt_d/\Lambda_k, rt_{d+1}/\Lambda_1, \cdots, rt_m/\Lambda_1)$ satisfies \eqref{eqci2leqofd} and \eqref{framaxcimincileq}. Now we fix $\vec{t} = (t_1,\cdots,t_m) \in T$ and take $\vec{c} = (rt_1/\Lambda_k, \cdots, rt_d/\Lambda_k, rt_{d+1}/\Lambda_1, \cdots, rt_m/\Lambda_1)$ with $r \to 0$ such that $\vec{c} \to \vec{0}$. By Step 2, we assume that $-\lambda_{\vec{c},i}\to\Lambda_{j_{i}}$ up to a subsequence as $\vec{c}\to\vec{0}^+$, and $j_{d+1} = \cdots j_m = 1$. Then we obtain that
	\begin{align}
		\lim_{\vec{c}\to\vec{0}^+} \frac{\sum_{i=1}^m\int_{\Omega}\lvert\nabla u_{\vec{c},i}\rvert^2dx}{\sum_{i=1}^dc_{i}\Lambda_{k}+\sum_{i=d+1}^mc_{i}\Lambda_{1}} &=\lim_{\vec{c}\to\vec{0}^+}\Big(\frac{1}{\Lambda_{k}}\sum_{i=1}^dt_{i}\frac{\int_{\Omega}\lvert\nabla u_{\vec{c},i}\rvert^2dx}{c_{i}}+\frac{1}{\Lambda_{1}}\sum_{i=d+1}^mt_{i}\frac{\int_{\Omega}\lvert\nabla u_{\vec{c},i}\rvert^2dx}{c_{i}}\Big),\\
		&=\frac{1}{\Lambda_{k}}\sum_{i=1}^dt_{i}\Lambda_{j_{i}}+\sum_{i=d+1}^mt_{i}=1,
	\end{align}
	that is
	\[\sum_{i=1}^dt_{i}(\Lambda_{k}-\Lambda_{j_{i}})=0.\]
	Then, we can choose special $(t_{1},\cdots,t_{m}) \in T$ such that $\Lambda_{j_{1}}=\cdots=\Lambda_{j_{d}}=\Lambda_{k}$.
	
	
	Finally, when $d = 0$, our method proving Theorem \ref{thmbeta>0seminodal} still works, in which case the normalized solutions that we obtain are positive. By Step 2 above, we know \[(\lambda_{1},\cdots,\lambda_{m})=(\underbrace{\Lambda_{1},\cdots,\Lambda_{1}}_{m})\] is a nontrivial bifurcation point of system \eqref{eq:mainequation}. The proof is complete.
\end{proof}

Now we can complete the proof of Theorem \ref{limitbehaviorhsignchanging}.

\begin{proof}[Proof of Theorem \ref{limitbehaviorhsignchanging}]
	Let
	\[
	\mathbb{B} := \bigg\{ (\lambda_{1},\cdots,\lambda_{m}) \in \R^m: \la_i \text{ is an eigenvalue of $-\Delta$ on $H_0^1(\Omega)$ for each } i \in \big\{1,\cdots,m\big\} \bigg\}.
	\]
	On one hand, it is relatively standard to show that any nontrivial bifurcation point of system \eqref{eq:mainequation} is in $\mathbb{B}$. On the other hand, combining Proposition \ref{propbifurcationsign-changing} and Proposition \ref{propbifurcationsemi-nodal} we conclude that any point in $\mathbb{B}$ is a nontrivial bifurcation point of system \eqref{eq:mainequation}. Thus we complete the proof.
\end{proof}

\begin{remark} \label{rmk:mdimension}
	{\rm We remark that bifurcation phenomenon from the nontrivial bifurcation point of system \eqref{eq:mainequation} in $\mathbb{B}$ is $m$-dimensional. In fact, let $T$ be defined in \eqref{def: set T} or \eqref{def: set Tofd}, which is $(m-1)$-dimensional after removing $Z$, a countable union of zero $\R^{m-1}$-measure sets. Then both
	\[
	\lbr{\vec{c} = r\vec{t}: 0 < r < \frac{\hat{c}_k}{\overline{c}}, ~ \vec{t} \in T\backslash Z}
	\]
	and
	\[
	\lbr{\vec{c} = r\sbr{\frac{t_1}{\Lambda_k}, \cdots, \frac{t_d}{\Lambda_k}, \frac{t_{d+1}}{\Lambda_1}, \cdots, \frac{t_m}{\Lambda_1}}: 0 < r < \frac{\Lambda_1\hat{c}_{k,d}}{\overline{c}}, ~ \vec{t} \in T\backslash Z}
	\]
	are $m$-dimensional.}
\end{remark}

\subsection{Proof of Theorem \ref{thm:semi-trivialbifurpoint}}

At the end of this article, we prove Theorem \ref{thm:semi-trivialbifurpoint}.

\begin{proof}[Proof of Theorem \ref{thm:semi-trivialbifurpoint}]
	We merely prove that for any positive integer $1 \le d \le m-1$, the set of $d$-semi-trivial bifurcation points of system \eqref{eq:mainequation} is exactly
	\begin{align*}
		& ~~ \mathbb{B}_d := \\
		& \bigg\{ (\lambda_{1},\cdots,\lambda_{m}) \in \R^m: \exists i_1 < \cdots < i_d \in \big\{1,\cdots,m\big\}, ~ \la_{i_1}, \cdots, \la_{i_d} \text{ are eigenvalues of $-\Delta$ on } H_0^1(\Omega)\bigg\}.
	\end{align*}
	Then it follows that the set of semi-trivial bifurcation points is
	\[
	\bigg\{ (\lambda_{1},\cdots,\lambda_{m}) \in \R^m: \exists i \in \big\{1,\cdots,m\big\} \text{ such that } \la_i \text{ is an eigenvalue of $-\Delta$ on } H_0^1(\Omega)\bigg\}.
	\]
	
	On one hand, it is relatively standard to show that any $d$-semi-trivial bifurcation point of system \eqref{eq:mainequation} is in $\mathbb{B}_d$. On the other hand, let $(\lambda_{1},\cdots,\lambda_{m}) \in \mathbb{B}_d$. After rearranging $\lambda_{1},\cdots,\lambda_{m}$, we assume that $\lambda_{1},\cdots,\lambda_{d}$ are eigenvalues of $-\Delta$ on $H_0^1(\Omega)$. By Theorem \ref{limitbehaviorhsignchanging}, if $d \ge 2$ there exists $\{(\la_{n,1}, \cdots, \la_{n,d}; u_{n,1}, \cdots, u_{n,d})\} \subset \R^d \times H_0^1(\Omega,\R^d)$ such that $-\la_{n,i} \to \la_i$, $u_{n,i} \not\equiv 0$, $\|u_{n,i}\| \to 0$ for $i = 1,\cdots d$, and for each $n$, $(u_{n,1}, \cdots, u_{n,d})$ satisfies
	\begin{align*}
		-\Delta u_{n,i} + \la_{n,i} u_{n,i} =  \mu_i u_{n,i}^3+ \sum_{j=1,j\neq i}^d\beta_{ij} u_{n,j}^2 u_{n,i}  \quad \text{ in } \Omega.
	\end{align*}
	Such result also holds when $d = 1$, in a single equation case. Then considering the sequence $$\{(\la_{n,1}, \cdots, \la_{n,d}, \la_{d+1},\cdots,\la_m; u_{n,1},\cdots,u_{n,d},0,\cdots,0)\} \subset \R^m \times H_0^1(\Omega,\R^m),$$  we know $(\lambda_{1},\cdots,\lambda_{m})$ is a $d$-semi-trivial bifurcation point of system \eqref{eq:mainequation}. The proof is complete.
\end{proof}

\medskip

{\small \noindent \textbf{Acknowledgements:} This work is funded by National Key R\&D Program of China (Grant 2023YFA1010001) and NSFC (12571123). It was completed when Song was visiting the Universit\'e Marie et Louis Pasteur, LmB (UMR 6623).  Liu is supported by the "National Funded Postdoctoral Researcher Program" (GZB20240945) and "China Postdoctoral Science Foundation" (2025M784442). Song is supported by "China Postdoctoral Science Foundation" (2024T170452).
}

\medskip

{\small \noindent \textbf{Statements and Declarations:} The authors have no relevant financial or non-financial interests to disclose.}

\medskip

{\small \noindent \textbf{Data availability:} Data sharing is not applicable to this article as no datasets were generated or analysed during the current study.}

\end{document}